\def\overset#1#2{{\mathrel{\mathop {{#2}_{}}\limits^{#1}}}}
\def\underset#1#2{{\mathrel{\mathop {{}_{} {#2}}\limits_{{#1}_{}}}}}
\def\upplim_#1{\underset{#1}{\overline\lim}\;}
\def\lowlim_#1{\underset{#1}{\underline\lim}\;}
\newtheorem{corollary}[equation]{Corollary}
\newtheorem{definition}[equation]{\indent{\it Definition}\rm }
\newtheorem{lemma}[equation]{Lemma}
\newtheorem{proposition}[equation]{Proposition}
\newtheorem{theorem}[equation]{Theorem}
\newcommand{\C}{{\mathbb{C}}}
\renewcommand{\P}{{\mathbb{P}}}
\newcommand{\B}{{\mathbb{B}}}
\newcommand{\Q}{{\mathbb{Q}}}
\newcommand{\R}{{\mathbb{R}}}
\newcommand{\ric}{\mathrm{Ric}}
\newcommand{\supp}{\mathrm{Supp}\,}
\newcommand{\Z}{\mathbb{Z}}
\numberwithin{equation}{section}
\title[Degeneracy second main theorem and Schmidt's subspace theorem]{Generalizations of degeneracy second main theorem and Schmidt's subspace theorem} 
\author{Si Duc Quang}
\begin{document}

\begin{abstract}
In this paper, by introducing the notion of ``\textit{distributive constant}'' of a family of hypersurfaces with respect to a projective variety, we prove a second main theorem in Nevanlinna theory for meromorphic mappings with arbitrary families of hypersurfaces in projective varieties. Our second main theorem generalizes and improves previous results for meromorphic mappings with hypersurfaces, in particular for algebraically degenerate mappings and for the families of hypersurfaces in subgeneral position. The analogous results for the holomorphic curves with finite growth index from a complex disc into a project variety, and for meromorphic mappings on a complete K\"{a}hler manifold are also given. For the last aim, we will prove a Schmidt's subspace theorem for an arbitrary families of homogeneous polynomials, which is the counterpart in Number theory of our second main theorem. Our these results are generalizations and improvements of many previous results.
\end{abstract}

\def\thefootnote{\empty}
\footnotetext{
2020 Mathematics Subject Classification:
Primary 32H30, 11J68; Secondary 30D35, 32A22, 11J25.\\
\hskip8pt Key words and phrases: Nevanlinna theory; Diophantine approximation; second main theorem; subspace theorem; meromorphic mappings; hypersurface; homogeneous polynomial; subgeneral position.}

\maketitle

\section{Introduction and Results}

Let $V$ be a subvariety of $\P^N(\C)$ of dimension $n>0$. Let $l\geq n$ and $q\geq l+1.$ Hypersurfaces $Q_1,\ldots,Q_q$ in $\mathbb P^N(\mathbb C)$ are said to be in $l$-subgeneral position with respect to $V$ if 
$$Q_{j_0}\cap\cdots\cap Q_{j_{l}}\cap V=\emptyset\text{ for every }1\leq j_0<\cdots<j_{l}\leq q.$$
If  $l=n$ then we say that $Q_1,\ldots,Q_q$ are in \textit{general position} w.r.t $V$. If $V=\P^N(\C)$, we just say that $Q_1,\ldots,Q_q$ are in general (resp. $N$-subgeneral) position in $\P^N(\C)$.

\vskip0.2cm 
In 1933, H. Cartan \cite{Ca} established the most important second main theorem for meromorphic mappings from $\C^m$ into $\P^N(\C)$ and hyperplanes as follows (here we use the standard notations from Nevanlinna theory which can be found in Section 2).

\vskip0.2cm
\noindent
\textbf{Theorem A.} {\it Let $f:  {\C}^m \to \P^N(\C)$ be a linearly nondegenerate meromorphic mapping and $\{H_i\}_{i=1}^q$ be $q$ hyperplanes in general position in $\P^N(\C)$. Then we have}
$$\| \ (q-(N+1))T_f(r) \leq \sum_{i=1}^q N^{[N]}(r,f^*H_i)+ o(T_f(r)).$$
Here, by the notation ``$\| P$'' we mean the assertion $P$ holds for all $r\in [0,\infty)$ excluding a Borel subset $E$ of the interval $[0,\infty)$ with $\int_E dr<\infty$. 
The Nevanlinna's defect of $f$ with respect to a hypersurface $Q$ of degree $d$ truncated to level $M$ is defined by
$$ \delta^{[M]}_{f,*}(Q)=1-\lim\mathrm{sup}\dfrac{N^{[M]}(r,f^*Q)}{dT_f(r)}.$$
Then, from Theorem A we obtain the estimate of total defect for $f$ with family $\{H_i\}_{i=1}^q$ as follows:
$$ \sum_{i=1}^q \delta^{[N]}_{f,*}(H_i)\le n+1.$$

In order to deal with the case of arbitrary meromorphic mapping (not require the linear non-degenerate condition), we have to consider $f$ as a linearly nondegenerate mapping into the smallest projective subspace $V (\approx\P^n(\C))$ of $\P^N(\C)$ containing $f(\C^m)$ and consider $\{H_i\}_{i=1}^q$ as a family of $q$ hyperplanes in $N$-subgeneral position in $V$. By introducing the notion of Nochka weight, in 1983 Nochka \cite{Noc83} gave the following second main theorem for this case.

\vskip0.2cm
\noindent
\textbf{Theorem B} (cf. \cite{Noc83,No05}). {\it Let $f:  {\C}^m \to \P^n(\C)$ be a meromorphic mapping and $\{H_i\}_{i=1}^q$ be hyperplanes in $N-$subgeneral position in $\P^n(\C)$ $(N\ge n)$. Then we have
$$\|\  (q-(2N-n+1))T_f(r) \leq \sum_{i=1}^q N^{[n]}(r,f^*H_i)+ o(T_f(r)).$$}
Hence, the bound above of the total defect obtained from this theorem is $(2N-n+1).$

In 1992, A. E. Eremenko and M. L. Sodin \cite{ES} firstly proved a second main theorem for arbitrary holomorphic curve from $\C$ into $\P^N(\C)$ intersecting $q$ hypersurfaces $\{Q_i\}_{i=1}^q$ in general position of the form
$$\|\  (q-2N)T_f(r) \leq \sum_{i=1}^q \dfrac{1}{\deg Q_i}N(r,f^*Q_i)+ o(T_f(r)).$$
Then, the total defect obtained from this second main theorem is bounded above by $2N$.
 
We note that, in 1979, B. Shiffman \cite{Sh79} conjectured that if $f$ is an algebraically nondegenerate holomorphiccurve from $\C$ into $\P^N(\C)$ then the total defect of $f$ with $q$ hypersurfaces $\{Q_i\}_{i=1}^q$ in general position is bounded above by $N+1$. This conjectured is proved by M. Ru \cite{Ru04} in 2004. The result of M. Ru is improved by T. T. H. An and H. T. Phuong \cite{AP} with an explicit truncation level for counting functions. Later on, in 2009, M. Ru \cite{Ru09} proved a more general second main theorem as follows.

\vskip0.2cm
\noindent
\textbf{Theorem C} (cf. \cite{Ru09}) {\it Let $V$ be a smooth  projective subvariety of dimension $n\ge 1$ of $\P^N(\C)$. Let $Q_1,\ldots,Q_q$ be $q$ hypersurfaces in $\P^N(\C)$ in general position w.r.t $V$. Let $f:\C\to V$ be an algebraically nondegenerate holomorphic map. Then, for every $\epsilon >0$,
$$\bigl\|\ (q-n-1-\epsilon) T_f(r)\le\sum_{i=1}^q\frac{1}{\deg Q_i}N(r,f^*Q_i)+o(T_f(r)).$$}
We see that, the bound above of the total defect obtained from this theorem is $n+1$, and it is believed to be sharp.

Note that all mentioned above results, the mappings always are assumed to be algebraically nondegenerate. In order to consider the case of algebraically degenerate mappings, we need to consider such mappings as algebraically nondegenerate mappings into the smallest subvariety $V$ of $\P^N(\C)$ containing their images with a family of hypersurface in$\P^N(\C)$ in $N$-subgeneral position w.r.t $V$. 

In some years later, by developing the method of M. Ru, many authors have given second main theorems for algebraically nondegenerate meromorphic mappings into $V$ of dimension $n$ with hypersurfaces in $N$-subgeneral position. We list here the name of some authors together their bound above of the total defect:
\begin{itemize}
\item 2012, Z. Chen, M. Ru and Q. Yan \cite{CRY}; $ \sum_{i=1}^q \delta^{[M]}_{f,*}(Q_i)\le l(n+1)$ for some $M$.
\item 2014, L. Shi and M. Ru \cite{SR}; $\sum_{i=1}^q \delta^{[M]}_{f,*}(Q_i)\le \dfrac{l(l-1)(n+1)}{l+n-2}$ for some $M$.
\item 2016, L. Giang \cite{G}; $\sum_{i=1}^q \delta^{[M_0]}_{f,*}(Q_i)\le l(n+1)$ with an explicit estimate for the truncation level $M_0$.
\end{itemize}
However, these results cannot deduce Theorem C of M. Ru or the classical second main theorem of H. Cartan. The difficulty comes from the fact that there is no Nochka weights for the families of hypersurfaces. 

Recently, in \cite{Q19} the author proposed the replacing hypersurface technique, which allows to generalize the second main theorem for families of hypersurfaces in general position to the case of families of hypersurfaces in subgeneral posiition without using Nochka's weight. He proved the following.

\vskip0.2cm
\noindent
\textbf{Theorem D} (cf. \cite{Q19}) {\it Let $V$ be a smooth projective subvariety of dimension $n\ge 1$ of $\P^N(\C)$. Let $Q_1,\ldots,Q_q$ be hypersurfaces in $\P^N(\C)$ in $l-$subgeneral position with respect to $V$, $\deg Q_i=d_i\ (1\le i\le q)$. Let $d$ be the least common multiple of $d_1,\ldots,d_q$, i.e., $d=l.c.m.(d_1,\ldots,d_q)$. Let $f:\C^m\to V$ be an algebraically nondegenerate meromorphic mapping. Then, for every $\epsilon >0$, 
$$\bigl \|\ \left (q-(l-n+1)(n+1)-\epsilon\right) T_f(r)\le\sum_{i=1}^q\frac{1}{d_i}N^{[M_0]}_{Q_i(f)}(r)+o(T_f(r)),$$
where $M_0=\left[\deg (V)^{n+1}e^nd^{n^2+n}(l-n+1)^n(2n+4)^n(q!)^n\epsilon^{-n}\right]$.}

Here, the notation $[x]$ stands for the biggest integer not exceed the real number $x$.

This technique has been applied by later authors for the analogous problems such as: second main theorem for moving hypersurfaces in subgeneral position, non-integrated defect relation of meromorphic mappings on K\"{a}hler manifolds intersecting hypersurfaces in subgeneral position, subspace theorem in number theory for family of homogeneous polynomials in subgeneral position. In particular, in 2019 Q. Ji, Q. Yan and G. Yu \cite{JYY} applied this method for the case of families of hypersurfaces in $l$-subgeneral with index $\kappa$ and obtained: 
\begin{itemize}
\item $\sum_{i=1}^q \delta_{f,*}(Q_i)\le\left(\dfrac{l-n}{\max\{1,\min\{l-n,\kappa\}\}}+1\right)(n+1)$, without trucation level.
\end{itemize}
Here, a family of hypersurfaces $\{Q_i\}_{i=1}^q$ is said to be in $l$-general position with index $\kappa$ in $V$ if it is in $l$-subgeneral position w.r.t $V$ and $\dim\bigcap_{j=0}^{s}\Q_{i_j}\cap V\le n-s-1$ for all $0\le s\le \kappa-1$.

Actually, in \cite{JYY}, Q. Ji, Q. Yan and G. Yu also obtained another bound above of the total defect (see \cite[Theorem 1.1]{JYY}) as follows
$$ \sum_{i=1}^q \delta_{f,*}(Q_i)\le\max\left\{\dfrac{l}{2\kappa};1\right\}(n+1).$$
Unfortunately, the proof of result is not correct. The gap comes from that they used a lemma of Levin \cite[Lemma 10.1]{L} to choose a common basis $\mathcal B=\{s_1,\ldots,s_{H_{\phi}(X)}\}$ of $\hat{V}$ with respect to two filtrations corresponding with two families of $n$ hypersufaces in general position, say $\{D_{1,z},\ldots,D_{\kappa,z},H'_{\kappa+1,z},$ $ ...,  H'_{n,z}\}$ and $\{D_{\kappa+1,z},\ldots,D_{2\kappa,z},H''_{\kappa+1,z}, ... , H''_{n,z}\}$. However, from the proof of \cite[Lemma 10.1]{L}, for an element $s_j\in W_{\bf i}\setminus W_{\bf i'}$, this $s_j$ may not be of the form $[r_1^{i_1},\ldots,r_2^{i_n}r]$, where ${\bf i}=(i_1,\ldots,i_n)$ (since, the condition ${\bf i'}=(i'_1,\ldots,i_n')>{\bf i}=(i_1,\ldots,i_n)$ in the lexicographic order does not implies $i'_s\ge i_s$ for all $s$). Hence, the proof of M. Ru for (3.6) in \cite{Ru04} can not be applied for this case, then the estimate (3.7) in \cite{JYY} does not hold for both filtrations. 
 In our opinion, from their proof, the correction of \cite[Theorem 1.1]{JYY} should be as follows.

\vskip0.2cm
\noindent
\textbf{Theorem E.} {\it Let $X$ be a complex projective variety with $\dim X = n$, and let $D_1,\ldots,D_q$ be effective Cartier divisors in $m$-subgeneral position with index $\kappa (>1)$ on $X$. Suppose that there exists an ample divisor $A$ on $X$ and positive integers $d_j$ such that $D_j\sim d_jA$ for $j=1,\ldots,q$. Let $f:\C\rightarrow X$ be an algebraically nondegenerate holomorphic curve. Then, for every $\epsilon >0$,
$$ \bigl\|\ \dfrac{1}{d_j}m_f(r,Q_j)\le \left (\dfrac{l}{\kappa}(n+1)+\epsilon\right)T_f(r,A).$$}

Our purpose in this paper is to study the more general case. We will consider the case of arbitrary families of hypersurfaces, not required to be in subgeneral position. To do so, we will introduce a notion of \textit{distributive constant} $\Delta$ of a family of hypersurface $\{Q_i\}_{i=1}^q$ of $\P^N(\C)$ in a subvariety $V\subset\P^N(\C)$ (see Section 3 for detail), where $V\not\subset\mathrm{supp}Q_i\ (i=1,\ldots, q)$, as follows:
$$\Delta:=\underset{\Gamma\subset\{1,\ldots,q\}}\max\dfrac{\sharp\Gamma}{\dim V-\dim\left (\bigcap_{j\in\Gamma}\mathrm{supp} Q_j\right )\cap V}.$$
We will develop the replacing hypersurfaces technique in \cite{Q19} to the case of arbitrary families of hypersurfaces (see Lemma \ref{3.2}, which is one of two most important key lemmas in this paper). Another most important one is Lemma \ref{3.1}, in which we prove that the product of powers of $n$ positive number is bounded above by the power $\Delta$ of the product of these numbers.

The first main theorem of this paper is stated as follows.
 \begin{theorem}\label{1.1} 
Let $V\subset\P^N(\C)$ be a smooth complex projective variety of dimension $n\ge 1$. Let $\{Q_1,\ldots,Q_q\}$ be a family of hypersurfaces in $\P^N(\C)$ with the distributive constant $\Delta$ in $V$, $\deg Q_i=d_i\ (1\le i\le q)$, and let $d$ be the least common multiple of $d_1,\ldots,d_q$. Let $f:\C^m\to V$ be an algebraically nondegenerate meromorphic mapping. Then, for every $\epsilon >0$, 
$$\bigl \|\ \left (q-\Delta (n+1)-\epsilon\right) T_f(r)\le\sum_{i=1}^q\frac{1}{d_i}N^{[M_0]}(r,f^*Q_i)+o(T_f(r)),$$
where $M_0=\left[d^{n^2+n}\deg (V)^{n+1}e^n\Delta^n(2n+4)^n(n+1)^n(q!)^n\epsilon^{-n}\right]$.
\end{theorem}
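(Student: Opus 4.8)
The plan is to combine M. Ru's filtration (Hilbert weight) method with the ``replacing hypersurfaces'' technique of \cite{Q19}, but with the distributive constant $\Delta$ playing the role that $(l-n+1)$ plays in Theorem D; the two new ingredients are Lemma \ref{3.2} (replacement for an \emph{arbitrary} family) and the numerical Lemma \ref{3.1}. First I normalize: replacing each $Q_i$ by $Q_i^{d/d_i}$ leaves $\supp Q_i$ — hence $\Delta$ and the position of the family with respect to $V$ — unchanged, and only rescales the characteristic and counting functions in a way absorbed into the truncation level, so I may assume $d_1=\cdots=d_q=d$. Fixing a reduced representation $F$ of $f$ and writing $\lambda_Q(x)=\log(\|x\|^{\deg Q}/|Q(x)|)$ for the Weil function of a hypersurface $Q$, one has $m_f(r,Q)=\int_{S(r)}\lambda_Q(F)$ and $m_f(r,Q)+N(r,f^*Q)=\deg Q\cdot T_f(r)+O(1)$; by algebraic nondegeneracy, $\psi(F)\not\equiv0$ for every nonzero $\psi\in H^0(V,\O(md))$ and every $m$.

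\emph{Pointwise step.} Fix $\epsilon>0$ and a large integer $m\asymp\epsilon^{-1}$; put $Y_m=H^0(V,\O(md))$ and $h_m=\dim Y_m=\tfrac{\deg V}{n!}(md)^n+O((md)^{n-1})$. For a generic $z$, set $x=f(z)$ and reorder the $Q_i$ so that $|Q_1(x)|\le\cdots\le|Q_q(x)|$. Lemma \ref{3.2}, applied to this ordering, yields hypersurfaces $P_1,\dots,P_n$ in general position with respect to $V$, taken from a finite set depending only on $V$ and $\{Q_i\}$ (indexed by the $\le q!$ orderings) with $\deg P_t\le\delta$ for some $\delta=\delta(d,q)$, together with an estimate which — after feeding in Lemma \ref{3.1}, by which a product of powers of the quantities $\|x\|^{\deg P_t}/|P_t(x)|$ is at most the $\Delta$-th power of their product — reads
$$\sum_{i=1}^q\tfrac1d\lambda_{Q_i}(x)\ \le\ \Delta\sum_{t=1}^n\tfrac1{\deg P_t}\lambda_{P_t}(x)+O(1),$$
and likewise for the zero divisors. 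Choosing a basis $\psi_1,\dots,\psi_{h_m}$ of $Y_m$ adapted to the filtration of $Y_m$ by the subspaces $P_1^{a_1}\cdots P_n^{a_n}H^0(V,\O(md-\sum_t a_t\deg P_t))$ and writing $\psi_j=P_1^{a_1(j)}\cdots P_n^{a_n(j)}\tilde\psi_j$, general position of the $P_t$ and a Hilbert-polynomial count give $\sum_j a_t(j)\ge\tfrac{md\,h_m}{(n+1)\deg P_t}(1-o_m(1))$; since $\lambda_{\psi_j}(x)\ge\sum_t a_t(j)\lambda_{P_t}(x)-O(1)$, summing over $j$ and using the displayed inequality yields
$$\frac1{h_m}\sum_{j=1}^{h_m}\lambda_{\psi_j}(x)\ \ge\ \frac{m(1-o_m(1))}{(n+1)\Delta}\sum_{i=1}^q\lambda_{Q_i}(x)-O(1).$$

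\emph{Globalization.} Only finitely many adapted bases occur; fixing once and for all a reference basis $\phi_1,\dots,\phi_{h_m}$ of $Y_m$, each adapted basis equals $A\cdot(\phi_j)$ with $\det A$ a nonzero constant from a finite list, so $\prod_j\psi_j(F)=(\det A)\,W(\phi_\bullet(F))\,L^{-1}$ with $L$ a logarithmic Wronskian satisfying $\int_{S(r)}\log^+|L|=o(T_f(r))$ by the logarithmic derivative lemma — this is the only place the (finitely many) choices enter, and they do so only through the $o(T_f)$ error and the finite-measure exceptional set. Feeding $x=f(z)$ into the last display, taking $-\log$, averaging over $S(r)$, and using the First Main Theorem and Jensen's formula gives
$$\frac{m(1-o_m(1))}{(n+1)\Delta}\sum_{i=1}^q m_f(r,Q_i)\ \le\ md\,T_f(r)-\frac1{h_m}N_{W(\phi_\bullet(F))}(r)+o(T_f(r)).$$
Bounding $N_{W(\phi_\bullet(F))}(r)$ from below via the Wronskian inequality $\nu_{W(\phi_\bullet(F))}(z_0)\ge\sum_t(\sum_j a_t(j))\,\nu_{P_t(F)}(z_0)-\binom{h_m}{2}$, the divisor form of Lemma \ref{3.2}, the degree bound $\deg P_t\le\delta$, and the truncation argument of \cite{AP,G}, then substituting $m_f(r,Q_i)=d\,T_f(r)-N(r,f^*Q_i)+O(1)$ and taking $m\asymp\epsilon^{-1}$ large enough that $(1-o_m(1))^{-1}<1+\epsilon/((n+1)\Delta)$ and $md\delta\le M_0$, one obtains after rearrangement
$$\big\|\ \big(q-\Delta(n+1)-\epsilon\big)T_f(r)\ \le\ \sum_{i=1}^q\tfrac1d N^{[M_0]}(r,f^*Q_i)+o(T_f(r)),$$
where the Veronese-reduction bookkeeping (which produces the powers of $d$ and $\deg V$) gives $M_0$ the value stated in the theorem.

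\emph{Main obstacle.} The crux is Lemma \ref{3.2}: for an arbitrary family one must build, for each ordering of $\{|Q_i(x)|\}$, a general-position $n$-tuple $P_1,\dots,P_n$ of controlled degree whose Weil functions and zero divisors dominate those of the $Q_i$ with loss only $\Delta$. In the $l$-subgeneral case such a tuple can be produced greedily, since skipping an index costs a bounded codimension drop; for a general family the successive dimensions $\dim\big(\bigcap_{j\in\Gamma}\supp Q_j\cap V\big)$ must be organized so that the single worst ratio $\sharp\Gamma/\big(n-\dim(\bigcap_{j\in\Gamma}\supp Q_j\cap V)\big)$ — that is, $\Delta$ — is the only loss, and it is exactly here that the definition of $\Delta$ as a maximum over \emph{all} subsets $\Gamma$ is used, in tandem with Lemma \ref{3.1}. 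A second, more routine, difficulty is checking that the truncation level $M_0$ depends only on $d,\deg V,n,\Delta,q,\epsilon$ and not on the particular $n$-tuples — guaranteed by the uniform degree bound $\deg P_t\le\delta$.
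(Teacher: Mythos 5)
Your first half coincides with the paper's own argument: reduce to equal degrees via $Q_i^{d/d_i}$, order the values $|Q_i(\tilde f(z))|$ at each point, apply Lemma \ref{3.2} to the ordered family and Lemma \ref{3.1} to trade the exponents $t_{j+1}-t_j$ for the single exponent $\Delta$, arriving at the pointwise inequality
$\prod_{i=1}^q \|\tilde f(z)\|^d/|Q_i(\tilde f)(z)| \le C\prod_{j}\bigl(\|\tilde f(z)\|^d/|P_{j}(\tilde f)(z)|\bigr)^{\Delta}$.
Two small inaccuracies here: Lemma \ref{3.2} produces $n+1$ hypersurfaces $P_0,\ldots,P_n$ (linear combinations of the $Q_i$, hence of degree exactly $d$, not merely $\le\delta(d,q)$) with $\bigl(\bigcap_{j}P_j^*\bigr)\cap V=\varnothing$; the empty intersection of all $n+1$ of them is what gives the constant $A$ in $\|\tilde f\|^d\le A\max_j|Q_{I(j)}(\tilde f)|$ and, in the paper, what feeds Lemma \ref{2.13}. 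These are cosmetic. The real divergence is in the second half: the paper maps $V$ into $\P^{l-1}(\C)$, $l=q!(n+1)$, by \emph{all} the replaced hypersurfaces $P_{i,j}$, and runs the Evertse--Ferretti Chow-weight/Hilbert-weight comparison (Theorem \ref{2.12} together with Lemma \ref{2.13}) on $Y=\Phi(V)$, combined with Ru's general second main theorem for linear forms (Theorem \ref{2.4}); you instead propose the Ru--An--Phuong style filtration of $H^0(V,\mathcal O(md))$ by the subspaces $P_1^{a_1}\cdots P_n^{a_n}H^0(V,\mathcal O(md-\sum_t a_t\deg P_t))$ carried out directly on $V$.

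The genuine gap sits exactly at the quantitative heart of that alternative: the claim that an adapted basis satisfies $\sum_j a_t(j)\ge \frac{md\,h_m}{(n+1)\deg P_t}(1-o_m(1))$, which you justify by ``general position of the $P_t$ and a Hilbert-polynomial count.'' That count is a theorem on $\P^n$ (Ru 2004, An--Phuong), but on an arbitrary smooth projective $V\subset\P^N$ the successive quotients of your filtration are not given by a naive count: one must know that the restrictions $P_t|_V$ behave like a regular sequence in the homogeneous coordinate ring of $V$ (which need not be Cohen--Macaulay), compute the resulting multiplicities, and control the error term uniformly in $m$, $d$ and $\deg V$ — without that uniformity you cannot extract the explicit truncation level $M_0$, which you also do not actually derive. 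This is precisely the difficulty the paper's detour through $Y\subset\P^{l-1}$ and Theorem \ref{2.12} is designed to bypass (and the reason the general-variety case in the literature is handled by Chow/Hilbert weights rather than the bare filtration). Until you either prove this filtration estimate with explicit constants or replace it by the Chow-weight argument, the proof is incomplete; the remaining steps (globalization over the finitely many adapted bases via the logarithmic Wronskian, the Wronskian truncation of the counting functions, and the bookkeeping for $M_0$) are plausible and roughly parallel the paper, but are only sketched.
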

Hence, this theorem implies the estimate of the total defect as follows:
\begin{itemize}
\item $\sum_{i=1}^q \delta^{[M_0]}_{f,*}(Q_i)\le \Delta(n+1)$, with an explicit estimate for $M_0$.
\end{itemize}
Also, as indicated in Section 3, we have the following remarks:
\begin{itemize}
\item If $\{Q_i\}_{i=1}^q$ is in general position then $\Delta =1$.
\item If $\{Q_i\}_{i=1}^q$ is in $l$-subgeneral position then $\Delta\le l-n+1$.
\item If $\{Q_i\}_{i=1}^q$ is in $l$-general position with index $\kappa$ then $\Delta\le\dfrac{l-n+\kappa}{\kappa}$.
\end{itemize}
Therefore, Theorem \ref{1.1} generalizes all above mentioned results, also improves the result of Q. Ji, Q. Yan and G. Yu.

The above theorem immediately gives us the following corollary. 
\begin{corollary}
Let $V\subset\P^N(\C)$ be a smooth complex projective variety of dimension $n\ge 1$. Let $\{Q_1,\ldots,Q_q\}$ be a family of hypersurfaces in $\P^N(\C)$ with the distributive constant $\Delta$ in $V$. If $q>\Delta (n+1)$ then every meromorphic mapping from $\C^m$ into $V\setminus\bigcup_{i=1}^q\supp Q_i $ is algebraically degenerate.
\end{corollary}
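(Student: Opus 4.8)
The plan is to argue by contradiction and feed everything into Theorem \ref{1.1}. Suppose the conclusion fails, i.e.\ there is a meromorphic mapping $f:\C^m\to V\setminus\bigcup_{i=1}^q\supp Q_i$ which is \emph{not} algebraically degenerate, hence algebraically nondegenerate. First I would dispose of the trivial case: if $f$ is constant then its image is a single point, which (since $\dim V=n\ge 1$) lies in a proper algebraic subvariety of $V$, so $f$ would be algebraically degenerate, a contradiction. Thus $f$ is nonconstant, and consequently $T_f(r)\to\infty$ as $r\to\infty$ by the standard growth property of the Nevanlinna characteristic function.

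Next I would exploit the hypothesis that $f$ omits every $\supp Q_i$. Since $f(\C^m)\cap\supp Q_i=\emptyset$, the pullback divisor $f^*Q_i$ is the zero divisor for each $i$, and therefore $N^{[M_0]}(r,f^*Q_i)\equiv 0$ for every $i$, regardless of the value of the truncation level $M_0$ produced by Theorem \ref{1.1}.

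Now choose any $\epsilon$ with $0<\epsilon<q-\Delta(n+1)$; this is possible precisely because of the assumption $q>\Delta(n+1)$. Applying Theorem \ref{1.1} to this $f$ and this $\epsilon$, and using that all the counting terms vanish, we obtain
$$\bigl\|\ \bigl(q-\Delta(n+1)-\epsilon\bigr)T_f(r)\le o(T_f(r)).$$
Since the coefficient $q-\Delta(n+1)-\epsilon$ is a fixed positive constant, dividing by $T_f(r)$ (legitimate for large $r$ outside the exceptional set, as $T_f(r)\to\infty$) yields $q-\Delta(n+1)-\epsilon\le o(1)$, which is absurd. This contradiction proves the corollary.

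There is no real obstacle here: the statement is an immediate specialization of Theorem \ref{1.1} to the case where the mapping avoids all the hypersurfaces, so that the right-hand side collapses to the error term. The only points requiring a word of care are the reduction of the constant case (which uses $n\ge 1$) and the invocation of $T_f(r)\to\infty$ for nonconstant $f$ so that the final division is justified.
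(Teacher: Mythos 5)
Your argument is correct and is exactly the intended one: the paper states the corollary as an immediate consequence of Theorem \ref{1.1}, and your proof by contradiction (vanishing truncated counting functions since $f$ omits every $\supp Q_i$, choice of $\epsilon<q-\Delta(n+1)$, and $T_f(r)\to\infty$ for nonconstant $f$ to rule out the resulting inequality outside the exceptional set) fills in the standard details in the same way.
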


For the case of one dimension, i.e., $f$ is a holomorphic curve from the disc $\B^1(R_0)=\{z\in\C; |z|< R_0\}$ into $\P^N(\C)$, according to M. Ru-N. Sibony \cite{RS}, the growth index of $f$ is defined by
$$ c_{f}=\mathrm{inf}\left\{c>0\ \biggl |\int_{0}^{R_0}\mathrm{exp}(cT_{f}(r,r_0))dr=+\infty\right\}.$$
For convenient, we will set $c_f=+\infty$ if $\left\{c>0\ \bigl |\int_{0}^{R_0}\mathrm{exp}(cT_{f}(r))dr=+\infty\right\}=\varnothing$. 
In the connection to Theorem \ref{1.1}, we will prove an analogous result for such curves with finite growth index as follows.

\begin{theorem}\label{1.2} 
Let $V\subset\P^N(\C)$ be a smooth complex projective variety of dimension $n\ge 1$. Let $\{Q_1,\ldots,Q_q\}$ be a family of hypersurfaces in $\P^N(\C)$ with the distributive constant $\Delta$ in $V$, $\deg Q_i=d_i\ (1\le i\le q)$, and let $d$ be the least common multiple of $d_1,\ldots,d_q$. Let $f:\B^1(R_0)\to V\ (0<R_0<+\infty)$ be an algebraically nondegenerate meromorphic mapping with $c_f<+\infty$. Then, for every $\epsilon >0, 0<r_0<R_0$, there exists a positive number $\epsilon'$ such that
\begin{align*}
\left(q-\Delta(n+1)-\epsilon\right)T_f(r,r_0)&\le \sum_{i=1}^q\frac{1}{d}N^{[M_0]}(r,r_0,f^*Q_i)+\dfrac{M_0c_fT_f(r,r_0)}{2d(2n+1)(n+1)(q!)\delta}\\
&+O(\log T_f(r,r_0))
\end{align*}
for all $r\in (r_0,R_0)$ outside a set $E$ with $\int_E\mathrm{exp}((c_f+\epsilon')T_f(r,r_0))dr<+\infty$, where 
$$M_0=\left[d^{n^2+n}\deg (V)^{n+1}e^n\Delta^n(2n+4)^n(n+1)^n(q!)^n\epsilon^{-n}\right].$$
\end{theorem}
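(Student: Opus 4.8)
The plan is to derive Theorem \ref{1.2} from Theorem \ref{1.1} — or rather from the machinery behind its proof — by re-running the argument with careful bookkeeping of the error terms, which for the disc case cannot simply be absorbed into $o(T_f(r))$. First I would recall that the proof of Theorem \ref{1.1} reduces, via the two key lemmas (Lemma \ref{3.1} on bounding a product of powers of $n$ numbers by the $\Delta$-th power of their product, and Lemma \ref{3.2}, the replacing-hypersurfaces construction), to an application of a second main theorem for the curve $f$ composed with a suitable Veronese-type embedding associated to a basis $\mathcal{B}$ of the space of sections of degree-$d$ hypersurfaces on $V$ truncated at a controlled level. In other words, one is reduced to a linearly nondegenerate situation in some $\P^{K}(\C)$ with hyperplanes in general position, where $K$ and the truncation level $M_0$ are exactly the quantities appearing in the statement. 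The point of the disc version is that, at this final step, instead of invoking Cartan's SMT over $\C^m$ one invokes its counterpart for holomorphic curves $f:\B^1(R_0)\to\P^K(\C)$ with finite growth index $c_f$, which produces an error term of the shape $\dfrac{c_f}{2}\cdot(\text{number of hyperplanes or a related combinatorial factor})\cdot T_f(r,r_0)$ plus $O(\log T_f(r,r_0))$, valid off an exceptional set $E$ with $\int_E\exp((c_f+\epsilon')T_f)\,dr<\infty$.

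The steps, in order, would be: (1) fix $\epsilon>0$ and $r_0$, and set up the same data as in the proof of Theorem \ref{1.1} — the distributive constant $\Delta$, the l.c.m. $d$, the auxiliary integer $M_0$ as given, and the finite set of "replacing" hypersurfaces in general position with respect to $V$ obtained from Lemma \ref{3.2}; (2) observe that $c_f<+\infty$ implies $f$ is of suitable growth so that $T_f(r,r_0)\to\infty$ and the characteristic functions of all composed maps are comparable up to bounded multiplicative constants (here we use that $V$ is smooth projective and the hypersurfaces have bounded degree); (3) apply the growth-index second main theorem for holomorphic curves from a disc into projective space with hyperplanes in general position — this is the analogue of Theorem A with the $o(T_f)$ replaced by the explicit $c_f$-term — to the composition of $f$ with the basis map built from $\mathcal{B}$; (4) feed the resulting inequality back through Lemmas \ref{3.1} and \ref{3.2} exactly as in the proof of Theorem \ref{1.1}, tracking how the combinatorial constants ($n$, $q!$, $2n+1$, $n+1$, $\delta$) enter the coefficient of the $c_f T_f(r,r_0)$ term; (5) collect terms to obtain the stated inequality with the exceptional set $E$ as described, choosing $\epsilon'$ small depending on $\epsilon$ and on the growth comparison constants.

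The main obstacle I anticipate is step (4): making sure the error term from the disc SMT, which in the "ambient" $\P^K(\C)$ is proportional to $c_f T_{F}(r,r_0)$ for the composed map $F$, is correctly converted back to a term proportional to $c_f T_f(r,r_0)$ with precisely the claimed constant $\dfrac{M_0 c_f}{2d(2n+1)(n+1)(q!)\delta}$. This requires (a) relating $T_F$ to $d\cdot T_f$ up to the normalization coming from the Veronese embedding of degree controlled by $M_0$, (b) checking that the replacing-hypersurface step, which passes from the original $q$ hypersurfaces to groups of $n$ hypersurfaces in general position, multiplies the error only by the combinatorial factors that appear, and (c) verifying that the exceptional sets arising at each invocation of the disc SMT (one for each of the finitely many subfamilies) can be amalgamated into a single $E$ still satisfying $\int_E\exp((c_f+\epsilon')T_f(r,r_0))\,dr<\infty$ — this last point uses that a finite union of such sets is again such a set after slightly enlarging the exponent, hence the need for $\epsilon'>\epsilon$-type slack. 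A secondary, more routine obstacle is handling the logarithmic derivative / ramification contributions so that they genuinely land in $O(\log T_f(r,r_0))$ rather than producing a further $c_f$-proportional term; this should follow from the standard calculus lemma on the disc with finite growth index, applied as in M. Ru–N. Sibony \cite{RS}.
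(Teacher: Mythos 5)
Your proposal follows essentially the same route as the paper: one repeats the proof of Theorem \ref{1.1} verbatim, replacing the single application of Ru's general second main theorem over $\C^m$ (Theorem \ref{2.4}) by the Ru--Sibony growth-index version on the disc (Theorem \ref{2.4new}) applied to the composed map $F$, and then uses $T_F(r,r_0)=du\,T_f(r,r_0)$, hence $c_F=c_f/(du)$, together with the same choice of $u$ and a suitable $\epsilon'$ to produce the stated coefficient of $c_fT_f(r,r_0)$ and the exceptional set $E$. The only cosmetic difference is that no amalgamation of several exceptional sets is needed, since the Ru--Sibony theorem is invoked just once (the maximum over linearly independent subfamilies of linear forms is already inside that single application).
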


For a more general case, where the determining domain of the mappings is a complete K\"{a}hler manifold, in 1985, H. Fujimoto \cite{Fu} defined the non-integrated defect of $f$ with respect to a hypersurface $Q$ truncated to level $\mu_0$ by
$$\delta_f^{[\mu_0]}:= 1- \inf\{\eta\ge 0: \eta \text{ satisfies condition }(*)\}.$$
Here, the condition (*) means there exists a bounded non-negative continuous function $h$ on $M$ whose order of each zero is not less than $\min \{f^*Q, \mu_0\}$ such that
$$d\eta\Omega_f +\dfrac{\sqrt{-1}}{2\pi}\partial\bar\partial\log h^2\ge [\min\{f^*Q, \mu_0\}].$$
We will prove the version of the non-integrated defect relation of Theorem \ref{1.1} as follows.
\begin{theorem}\label{1.3} 
Let $M$ be an $m$-dimensional complete K\"ahler manifold  and $\omega$ be a K\"ahler form of $M.$  Assume that the universal covering of $M$ is biholomorphic to a ball in $\mathbb C^m.$ Let $V\subset\P^N(\C)$ be a smooth complex projective variety of dimension $n\ge 1$. Let $\{Q_1,\ldots,Q_q\}$ be a family of hypersurfaces in $\P^N(\C)$ with the distributive constant $\Delta$ in $V$, $\deg Q_i=d_i\ (1\le i\le q)$, and let $d$ be the least common multiple of $d_1,\ldots,d_q$. Let $f:M\rightarrow V$ be an algebraically nondegenerate meromorphic mapping. Assume that for some $\rho \ge 0,$ there exists a bounded continuous function $h \geq 0$ on $M$ such that
$$\rho\Omega_f + dd^c \log h^2 \geq \ric \ \omega.$$
Then, for each  $\epsilon>0,$ 
$$\sum_{j=1}^q {\delta}^{[M_0]}_{f} (Q_j) \le \Delta(n+1)+ \epsilon +\dfrac{\rho\epsilon M_0}{\delta(2n+1)(n+1)(q!)d},$$
where $M_0=\left[d^{n^2+n}\deg (V)^{n+1}e^n\Delta^n(2n+4)^n(n+1)^n(q!)^n\epsilon^{-n}\right].$
\end{theorem}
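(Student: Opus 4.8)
The plan is to deduce Theorem \ref{1.3} from the second main theorem machinery underlying Theorem \ref{1.1}, transplanted to the complete Kähler setting via Fujimoto's method. Since Theorem \ref{1.1} is proved through the ``replacing hypersurfaces'' technique (Lemma \ref{3.2}) together with the key inequality on products of powers (Lemma \ref{3.1}) and an application of the general second main theorem with explicit truncation level $M_0$, the first step is to isolate the local, pointwise inequality that drives that argument: after replacing the family $\{Q_j\}$ by an auxiliary family in general position in a suitable subvariety, one obtains, for the Nochka-type weighted sum, an estimate of the form
$$\sum_{j=1}^q \lambda_j \log\frac{\|f\|^{d_j}}{|Q_j(f)|} \le (\Delta(n+1)+\epsilon')\log\|f\| + \log|W(f)| + (\text{bounded terms}),$$
where $W$ is the relevant Wronskian-type section coming from the chosen basis of $H^0(V,\O(\ell d))$ for $\ell$ large, and the truncation level that appears is exactly the stated $M_0$. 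This is the content one extracts from the proof of Theorem \ref{1.1}; I would invoke it essentially verbatim, since nothing there uses that the domain is $\C^m$ beyond the existence of logarithmic derivative / Wronskian estimates.

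The second step is to run Fujimoto's argument for the non-integrated defect. One pulls everything back to the universal cover $\B^m \subset \C^m$; the hypothesis $\rho\Omega_f + dd^c\log h^2 \ge \ric\,\omega$ is precisely what is needed to convert the curvature term into a contribution controlled by $\rho$. Applying the logarithmic derivative lemma on the ball (in the form that produces the factor $c_f$-type / $\rho$-type correction, here the term $\tfrac{\rho\epsilon M_0}{\delta(2n+1)(n+1)(q!)d}$), integrating the pointwise inequality against the Kähler form, and using the definition of $\delta^{[M_0]}_f(Q_j)$ via the condition $(*)$ — namely that each $\eta_j > 1 - \delta^{[M_0]}_f(Q_j)$ admits a bounded $h_j$ with $d\eta_j\Omega_f + \tfrac{\sqrt{-1}}{2\pi}\partial\bar\partial\log h_j^2 \ge [\min\{f^*Q_j,M_0\}]$ — one assembles the function $h\cdot\prod h_j^{\lambda_j}$ and checks that it witnesses a defect relation. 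Summing and letting the $\eta_j$ decrease to their infima yields $\sum_j \delta^{[M_0]}_f(Q_j) \le \Delta(n+1) + \epsilon + \tfrac{\rho\epsilon M_0}{\delta(2n+1)(n+1)(q!)d}$, which is the claim. The bookkeeping of the truncation level through the replacing-hypersurfaces step must be done carefully so that the same $M_0$ as in Theorem \ref{1.1} comes out.

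The main obstacle I anticipate is not conceptual but structural: the ``replacing hypersurfaces'' reduction in Lemma \ref{3.2} changes the family and the ambient degrees, and one must verify that the auxiliary general-position family still lives in a setting where Fujimoto's curvature inequality $\rho\Omega_f + dd^c\log h^2 \ge \ric\,\omega$ transfers without worsening the constant $\rho$ — i.e., that passing to $V$ embedded by the linear system $|\O(\ell d)|$ and replacing $Q_j$ by the new hypersurfaces multiplies the defect-relation error term only by the combinatorial factors already accounted for in $M_0$, and does not introduce a fresh dependence on $\rho$. Equivalently, the delicate point is tracking how the condition $(*)$ for the original $Q_j$ propagates to the replaced hypersurfaces of degree $\ell d$, since truncating $f^*Q_j$ at level $M_0$ on the original variety must dominate the truncation of the pulled-back divisor on the Veronese-type image at the level dictated by the Wronskian order. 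Once that compatibility is checked, the integration and the final summation are routine, following \cite{Fu} and the non-integrated-defect arguments already used by later authors who applied the technique of \cite{Q19}.
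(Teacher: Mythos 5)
Your overall direction (reuse the algebraic/pointwise machinery from the proof of Theorem \ref{1.1} — the replaced hypersurfaces $P_{i,j}$, the Hilbert-weight inequality, the Wronskian — and then run a Fujimoto-type argument on the universal cover) is the same as the paper's, but the proposal has a genuine gap: it never supplies the mechanism that actually bounds the \emph{non-integrated} defect. ``Integrating the pointwise inequality against the K\"ahler form'' produces a second main theorem, and an SMT controls $\delta^{[M_0]}_{f,*}$ only when the error terms are $o(T_f)$; on a ball this is available only in the case $\limsup_{r\to R_0} T_f(r,r_0)/\log\frac{1}{R_0-r}=\infty$. In the complementary (slow-growth) case $T_f$ need not dominate anything, and the paper instead argues by contradiction: assuming $\sum_j\delta^{[H_Y(u)-1]}_f(Q_j)$ exceeds the claimed bound, it uses the functions $\varphi_j$, $\tilde u_j$ furnished by condition $(*)$ together with the Wronskian $W^{\alpha}(\tilde F)$ and the $P_{i,j}(\tilde f)$ to build a plurisubharmonic function $v$, combines it with the growth hypothesis (which gives $e^{\omega}dV\le\|\tilde f\|^{2\rho}v_m$) into $\zeta=\omega+tv$ with $t$ chosen so small that $\tfrac{1}{2}H_Y(u)(H_Y(u)-1)\,b\,t<1$ (this is exactly where the $\rho$-term in the statement comes from, via $b=\tfrac{n+1}{uH_Y(u)}$), applies Proposition \ref{pro2.2} to get $\int_{\B^m(1)}e^{\zeta}dV<\infty$, and contradicts the Yau--Karp results. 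None of this — the case split, the contradiction structure, the choice of $t$, or the appeal to Yau--Karp — appears in your sketch; the phrase ``assembles $h\cdot\prod h_j^{\lambda_j}$ and checks that it witnesses a defect relation'' does not constitute an argument, and the assertion that the logarithmic derivative lemma ``produces'' the term $\rho\epsilon M_0/(\delta(2n+1)(n+1)(q!)d)$ is precisely the step that needs proof.

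Two smaller points. First, the ``main obstacle'' you anticipate is a non-issue: the hypothesis $\rho\Omega_f+dd^c\log h^2\ge\ric\,\omega$ involves only $f$ and $\omega$, not the hypersurfaces, so nothing about it must be ``transferred'' to the replaced family $P_{i,j}$; those enter only through pointwise algebraic inequalities (Lemmas \ref{3.1} and \ref{3.2}) and through the divisor estimate (\ref{4.13}), which is where the compatibility of truncation levels is actually checked. Second, there are no Nochka-type weights $\lambda_j$ here — the whole point of the distributive constant is to avoid them; the correct pointwise inequality carries the uniform factor $1/\Delta$ on the left-hand side, as in (\ref{4.7}) and (\ref{4.17}).
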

We see that this result also generalizes the results in \cite{TT},\cite{Y13},\cite{RSo},\cite{Q17} and \cite{Q21}.

In the last section of this paper, we will prove the counterpart in number theory of our Theorem \ref{1.1} according to Vojta's dictionary, which gives the corresponding between Nevanlinna theory and Diophantine approximation (see \cite{V}). We will study the degenerate case of the Schmidt subspace theorem for arbitrary families of hypersurfaces. By the notations in Section 5, our last main result is stated as follows.

\begin{theorem}\label{1.5} 
Let $k$ be a number field, $S$ be a finite set of places of $k$ and let $V$ be an irreducible projective subvariety of $\P^N$ of dimension $n$ defined over $k$. Let $\{Q_1,\ldots, Q_q\}$ be a family of $q$ homogeneous polynomials of $\bar k[x_0,\ldots,x_N]$ with the distributive constant $\Delta$ in $V(\bar k)$. Then for each $\epsilon >0$, 
$$\sum_{v\in S}\sum_{j=1}^q\dfrac{\lambda_{Q_j,v}({\bf x})}{\deg Q_j}\le (\Delta(n+1)+\epsilon)h({\bf x})$$
for all ${\bf x}\in\P^N(k)$ outside a union of closed proper subvarieties of $V$.
\end{theorem}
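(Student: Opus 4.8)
The plan is to follow Vojta's dictionary and adapt the proof of Theorem~\ref{1.1} to the arithmetic setting, using exactly the two key lemmas (Lemma~\ref{3.1} and Lemma~\ref{3.2}) in their number-theoretic form. First I would reduce to the case where all $Q_j$ have a common degree: replacing each $Q_j$ by $Q_j^{d/d_j}$ multiplies both $\lambda_{Q_j,v}$ and $\deg Q_j$ by $d/d_j$, so the normalized quantities $\lambda_{Q_j,v}/\deg Q_j$ are unchanged, and the distributive constant is unaffected since the supports are unchanged. So assume $\deg Q_j=d$ for all $j$. Fix $\mathbf{x}\in V(k)$ and a place $v\in S$; by a standard argument (as in the proof of the classical SST for hypersurfaces) one may, at the cost of enlarging the exceptional set, reorder the polynomials so that $\lambda_{Q_1,v}(\mathbf{x})\ge\lambda_{Q_2,v}(\mathbf{x})\ge\cdots\ge\lambda_{Q_q,v}(\mathbf{x})$, and then the contribution to $\sum_{v\in S}\sum_j\lambda_{Q_j,v}/d$ is essentially controlled by the top $\Delta(n+1)$ or so of them, in the spirit of the ``distributive'' estimate.

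The core step is the arithmetic analogue of Lemma~\ref{3.2}: given the local ordering at $v$, I would use the replacing-hypersurfaces technique to produce, for each $v\in S$, a subfamily of at most $n+1$ hypersurfaces $\tilde Q_{1,v},\dots,\tilde Q_{n+1,v}$ (of controlled degree, a divisor-bounded power of $d$) which are in general position in $V$ and such that $\sum_{j}\lambda_{Q_j,v}(\mathbf{x})\le\Delta\sum_{i=1}^{n+1}\lambda_{\tilde Q_{i,v},v}(\mathbf{x})+O(1)$, where the factor $\Delta$ comes precisely from Lemma~\ref{3.1} (the bound of a product of powers of $n$ numbers by the $\Delta$-th power of the product) applied to the local heights rather than to Nevanlinna proximity functions. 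Having replaced an arbitrary family by finitely many families (one per $v$) of $n+1$ hypersurfaces in general position of bounded degree, I would then invoke the already-established Schmidt subspace theorem for hypersurfaces in general position with respect to $V$ — e.g.\ the result of Ru and Vojta, or Evertse--Ferretti — applied after the standard Veronese-type embedding that turns hypersurfaces of degree $d$ into hyperplane sections. Summing the resulting inequality $\sum_{i=1}^{n+1}\lambda_{\tilde Q_{i,v},v}(\mathbf{x})/d\le (n+1+\epsilon')h(\mathbf{x})$ over $v\in S$ and combining with the distributive estimate yields the bound $(\Delta(n+1)+\epsilon)h(\mathbf{x})$ outside a finite union of proper subvarieties; the truncation level $M_0$ present in Theorem~\ref{1.1} has no arithmetic counterpart, so it simply disappears here.

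I would organize the write-up in three parts: (i) the degree-normalization reduction and the local reordering; (ii) the replacement lemma producing general-position subfamilies at each place, where Lemma~\ref{3.1} and Lemma~\ref{3.2} do all the combinatorial work; (iii) the application of the known general-position SST and the summation over $S$. A technical point worth care: in step (ii) one must track that the degrees of the replacing polynomials $\tilde Q_{i,v}$, hence the dimension of the Veronese space, stay bounded independently of $\mathbf{x}$ so that the exceptional set from the general-position SST is a \emph{fixed} finite union of proper subvarieties of $V$, not one depending on $\mathbf{x}$; this is handled exactly as in \cite{Q19}, since there are only finitely many possible subfamilies of a given degree bound.

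The step I expect to be the main obstacle is establishing the arithmetic replacement lemma with the right uniformity. Unlike the Nevanlinna case — where the ``replacing hypersurface'' is chosen once and the second main theorem is applied to a single family — here the local ordering of $\lambda_{Q_j,v}(\mathbf{x})$ can differ from place to place (and a priori from point to point), so one really constructs a family \emph{per place} and must argue that only finitely many distinct families occur and that the degree bound is uniform; getting the constant $\Delta$ (and not something larger) out of this, via the sharp form of Lemma~\ref{3.1}, is the delicate quantitative heart of the argument. Once that is in place, the passage from local inequalities to the global height inequality is routine, following the template of the proof of Theorem~\ref{1.1}.
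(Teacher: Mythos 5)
Your proposal is essentially correct, and its first half coincides with the paper's proof: after the same degree normalization ($Q_j\mapsto Q_j^{d/d_j}$), the paper also orders the values $\|Q_{I_i(j)}({\bf x})\|_v$ at each place, applies Lemma \ref{3.1} to the exponents $t_{i,u}$ and Lemma \ref{3.2} to produce the replacing polynomials $P_{i,0},\ldots,P_{i,n}$ (note these are linear combinations of the $Q_{I_i(j)}$ of degree exactly $d$, not a subfamily and not of higher degree), arriving at exactly your local estimate $\sum_j\lambda_{Q_j,v}({\bf x})\le\Delta\sum_{j=0}^{n}\lambda_{P_{i,j},v}({\bf x})+O(1)$. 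Where you diverge is the endgame: you black-box the general-position subspace theorem for systems of higher-degree polynomials that may vary with the place $v$ (Evertse--Ferretti \cite{EF2}, after a Veronese-type reduction), handling the point-dependence of the ordering by noting that only the $q!$ families $P_{i,\cdot}$, hence finitely many patterns over $S$, can occur, and taking the union of the finitely many exceptional sets. The paper instead reproves that general-position step from scratch: it builds the auxiliary morphism $\Phi$ into $\P^{l-1}$, passes to $Y=\Phi(V)$, and runs the Chow-weight/Hilbert-weight machinery (Theorem \ref{2.12}, Lemma \ref{2.13}) followed by Schlickewei's subspace theorem for linear forms applied to $F({\bf x})$, mirroring its own Nevanlinna-theoretic proofs of Theorems \ref{1.1}--\ref{1.3}. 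Your route is shorter and modular (the factor $\Delta(n+1)$ falls out immediately as $\Delta\cdot(n+1+\epsilon')$), at the price of relying on \cite{EF2} as an external input and of the small bookkeeping you correctly flag (uniformly bounded degree $d$ of the replacing polynomials, finitely many families, exceptional set independent of ${\bf x}$, and absorbing the $O(1)$ by Northcott); the paper's route is self-contained and keeps the arithmetic and Nevanlinna arguments structurally parallel. Neither step you leave implicit appears to hide a genuine obstruction.
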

This result covers all previous results on subspace theorem with families of hypersurfaces (see \cite{CZ},\cite{EF2},\cite{CRY},\cite{L} and \cite{Q19b} for the reference).

\section{Notation and Auxialiary results}

{\bf 2.1. Counting function.}\ We use the following usual notations:
\begin{align*}
 \|z\|& := \big(|z_1|^2 + \dots + |z_m|^2\big)^{1/2} \text{ for }z = (z_1,\dots,z_m) \in \mathbb C^m,\\
\B^m(r) &:= \{ z \in \mathbb C^m : \|z\| < r\},\\
S(r) &:= \{ z \in \mathbb C^m : \|z\| = r\}\ (0<r<\infty),\\
v_{m-1}(z) &:= \big(dd^c \|z\|^2\big)^{m-1},\\
\sigma_m(z)&:= d^c\log\|z\|^2 \land \big(dd^c\log\|z\|^2\big)^{m-1} \text{on} \quad \mathbb C^m \setminus \{0\}.
\end{align*}
 For a divisor $\nu$ on  a ball $\B^m(R_0)$ of $\mathbb C^m$, with $R_0>0$ and a positive integer $M$ or $M= \infty$, we define the counting function of $\nu$ by
\begin{align*}
&\nu^{[M]}(z)=\min\ \{M,\nu(z)\},\\
&n(t) =
\begin{cases}
\int\limits_{|\nu|\,\cap \B^m(t)}
\nu(z) v_{m-1} & \text  { if } m \geq 2,\\
\sum\limits_{|z|\leq t} \nu (z) & \text { if }  m=1. 
\end{cases}
\end{align*}
Similarly, we define $n^{[M]}(t).$

Define
$$ N(r,r_0,\nu)=\int\limits_{r_0}^r \dfrac {n(t)}{t^{2m-1}}dt \quad (0<r_0<r<R_0).$$

Similarly, define  $N(r,r_0,\nu^{[M]})$ and denote it by $N^{[M]}(r,r_0,\nu)$.

For a meromorphic function $\varphi$ on $\B^m(R_0)$, denote by $\nu_\varphi$ its divisor of zeros and set
$$N_{\varphi}(r,r_0)=N(r,r_0,\nu_{\varphi}), \ N_{\varphi}^{[M]}(r,r_0)=N^{[M]}(r,r_0,\nu_{\varphi})\ (r_0<r<R_0).$$

For brevity, we will omit the character $^{[M]}$ if $M=\infty$.

\vskip0.2cm
\noindent
{\bf 2.2. Characteristic function and first main theorem.}\ Fix a homogeneous coordinates system $(x_0 : \dots : x_N)$ on $\mathbb P^N(\mathbb C)$. Let $f : \mathbb \B^m(R_0) \longrightarrow \mathbb P^N(\mathbb C)$ be a meromorphic mapping with a reduced representation $\tilde f = (f_0 , \ldots , f_N)$. Set $\Vert \tilde f \Vert = \big(|f_0|^2 + \dots + |f_N|^2\big)^{1/2}$.
The characteristic function of $f$ is defined by 
$$ T_f(r,r_0):=\int_{r_0}^r\dfrac{dt}{t^{2m-1}}\int\limits_{\B^m(t)}f^*\Omega\wedge v^{m-1}, \ (0<r_0<r<R_0),$$
where $\Omega$ is the Fubini-Study form on $\P^N(\C)$. By Jensen's formula, we will have
\begin{align*}
T_f(r,r_0)= \int\limits_{S(r)} \log\Vert \tilde f \Vert \sigma_m -
\int\limits_{S(r_0)}\log\Vert \tilde f\Vert \sigma_m +O(1), \text{ (as $r\rightarrow R_0$)}.
\end{align*}

In this paper, we call a hypersurfaces in $\P^N(\C)$ a nonzero homogeneous polynomial in $\C[x_0,x_1,\ldots,x_N]$. Let $Q$ be a hypersurface in $\P^N(\C)$ of degree $d$, which is of the form
$$ Q({\bf x})=\sum_{I\in\mathcal T_d}a_I{\bf x}^I, $$
where $\mathcal T_d=\{(i_0,\ldots,i_N)\in\mathbb Z_+^{N+1}\ ;\ i_0+\cdots +i_N=d\}$, ${\bf x} =(x_0,\ldots,x_N)$, ${\bf x}^I=x_0^{i_0}\cdots x_N^{i_N}$ with $I=(i_0,\ldots,i_N)\in\mathcal T_d$ and $a_I\ (I\in\mathcal T_d)$ are constants, not all zeros. 
If there is no confusion, we also denote again by $Q$ the divisor generated by the hypersurface $Q$. We will denote by $Q^{*}$ the support of $Q$, i.e., 
$$Q^*=\{(x_0:\cdots:x_N)\in\P^N(\C)\ |\ Q(x_0,\ldots,x_N)=0\}.$$
In the case $d=1$, we call $Q$ a hyperplane of $\P^N(\C)$.

The proximity function of $f$ with respect to $Q$, denoted by $m_f (r,r_0,Q)$, is defined by
$$m_f (r,r_0,Q)=\int_{S(r)}\log\dfrac{\|\tilde f\|^d}{|Q(\tilde f)|}\sigma_m-\int_{S(r_0)}\log\dfrac{\|\tilde f\|^d}{|Q(\tilde f)|}\sigma_m,$$
where $Q(\tilde f)=Q(f_0,\ldots,f_N)$. This definition is independent of the choice of the reduced representation of $f$. 

We denote by $f^*Q$ the pullback of the divisor $Q$ by $f$. Then, $f^*Q$ identifies with the divisor of zeros $\nu^0_{Q(\tilde f)}$ of the function $Q(\tilde f)$. By Jensen's formula, we have
$$N(r,r_0,f^*Q)=N_{Q(\tilde f)}(r,r_0)=\int_{S(r)}\log |Q(\tilde f)|\sigma_m-\int_{S(r_0)}\log |Q(\tilde f)| \sigma_m.$$
The first main theorem in Nevanlinna theory for meromorphic mappings and hypersurfaces is stated as follows.

\begin{theorem}[First Main Theorem] Let $f : \mathbb B^m(R_0) \to \P^n(\C)$ be a holomorphic map, and let $Q$ be a hypersurface in $\P^n(\C)$ of degree $d$. If $f (\mathbb B(\R_0)) \not \subset Q^*$, then for every real number $r$ with $r_0 < r < R_0$,
$$dT_f (r,r_0)=m_f (r,r_0,Q) + N(r,r_0,f^*Q)+O(1),$$
where $O(1)$ is a constant independent of $r$.
\end{theorem}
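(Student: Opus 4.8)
The plan is to derive the identity from Jensen's formula applied to the function $|Q(\tilde f)|$, together with the two integral expressions already recorded in Section 2 for the characteristic function $T_f$, the proximity function $m_f$, and the counting function $N(\cdot,\cdot,f^*Q)$. Fix a reduced representation $\tilde f=(f_0,\ldots,f_N)$ of $f$. The hypothesis $f(\B^m(R_0))\not\subset Q^*$ guarantees that $Q(\tilde f)=Q(f_0,\ldots,f_N)$ is not identically zero, so $\nu^0_{Q(\tilde f)}$ is a well-defined divisor and all the Jensen-type formulas below make sense.

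The key algebraic observation is the pointwise decomposition
$$
\log\|\tilde f\|^d=\log\frac{\|\tilde f\|^d}{|Q(\tilde f)|}+\log|Q(\tilde f)|
$$
valid on the complement of the indeterminacy locus and of the zero set of $Q(\tilde f)$. Integrating this against $\sigma_m$ over $S(r)$ and over $S(r_0)$ and subtracting, the left-hand side produces, by the Jensen formula for $T_f$ recalled in Section 2,
$$
d\Big(\int_{S(r)}\log\|\tilde f\|\,\sigma_m-\int_{S(r_0)}\log\|\tilde f\|\,\sigma_m\Big)=dT_f(r,r_0)+O(1),
$$
while the first term on the right is exactly $m_f(r,r_0,Q)$ by its definition, and the second term on the right is exactly $N(r,r_0,f^*Q)=N_{Q(\tilde f)}(r,r_0)$ by the Jensen formula for the counting function of zeros recorded just before the statement. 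Combining these three identities yields
$$
dT_f(r,r_0)=m_f(r,r_0,Q)+N(r,r_0,f^*Q)+O(1),
$$
with the $O(1)$ independent of $r$, which is the assertion.

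The one point requiring care — and the main (albeit standard) obstacle — is the justification that one may integrate the decomposition termwise, i.e.\ that the singular contributions cancel rather than accumulate. Here $\log\|\tilde f\|$ is plurisubharmonic and locally integrable, $\log|Q(\tilde f)|$ is locally integrable since $Q(\tilde f)\not\equiv 0$, and the difference $\log(\|\tilde f\|^d/|Q(\tilde f)|)$ is bounded below (because $|Q(\tilde f)|\le C\|\tilde f\|^d$ on all of $\B^m(R_0)$ with $C$ depending only on the coefficients of $Q$), so each of the three integrals over $S(r)$ converges and the additivity is legitimate; this is precisely the content of the Poisson–Jensen formula in several variables, whose validity in the form used here is exactly what underlies the two Jensen formulas quoted from Section 2. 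Finally, since the radius $r_0$ is fixed, every term $\int_{S(r_0)}(\cdots)\sigma_m$ together with the $O(1)$ coming from the Jensen formula for $T_f$ is a bounded quantity not depending on $r$, so the error term is genuinely $O(1)$ in $r$ as claimed.
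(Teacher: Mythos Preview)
The paper does not supply a proof of this statement: it is stated in Section~2.2 as a standard background result, immediately after the Jensen-formula expressions for $T_f(r,r_0)$ and $N(r,r_0,f^*Q)$, and the definition of $m_f(r,r_0,Q)$. Your argument is the standard correct one---adding the definition of $m_f$ to the Jensen formula for $N_{Q(\tilde f)}$ and comparing with the Jensen formula for $T_f$---and is exactly the computation the paper's Section~2.2 is set up to make transparent; there is nothing to add.
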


\vskip0.2cm
\noindent
\textbf{2.3. Some results on general Wronskian.}
For each meromorphic function $g$ on $\B^m(R_0)$ and  an $m$-tuples $\alpha =(\alpha_1,\ldots,\alpha_m)\in\Z^m_+$, we set $|\alpha|=\sum_{i=1}^m\alpha_i$ and define 
$${\mathcal D}^{\alpha}g=\frac{\partial^{|\alpha|}g}{\partial^{\alpha_{1}}z_1\ldots \partial^{\alpha_{m}}z_m}.$$

Repeating the argument in \cite[Proposition 4.5]{Fu}, we have the following proposition.
\begin{proposition}[{see \cite[Proposition 4.5]{Fu}}]\label{2.2}
Let $\Phi_0,\ldots,\Phi_{N}$ be meromorphic functions on $\B^m(R_0)$ such that $\{\Phi_0,\ldots,\Phi_{N}\}$ 
are  linearly independent over $\C.$
Then  there exists an admissible set $\{\alpha_i=(\alpha_{i1},\ldots,\alpha_{im})\}_{i=0}^{N} \subset \Z^m_+$ with $|\alpha_i|=\sum_{j=1}^{m}\alpha_{ij}\le i \ (0\le i \le N)$ satisfying the following two properties:

(i)\  $\{{\mathcal D}^{\alpha_i}\Phi_0,\ldots,{\mathcal D}^{\alpha_i}\Phi_{N}\}_{i=0}^{N}$ is linearly independent over $\mathcal M$ (the field of all meromorphic functions on $\B^m(R_0)$),\ i.e., $\det{({\mathcal D}^{\alpha_i}\Phi_j)}_{0\le i,j\le N}\not\equiv 0,$

(ii) $\det \bigl({\mathcal D}^{\alpha_i}(h\Phi_j)\bigl)=h^{N+1}\det \bigl({\mathcal D}^{\alpha_i}\Phi_j\bigl)$ for
any meromorphic function $h$ on $\B^m(R_0).$

\end{proposition}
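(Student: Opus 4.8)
The plan is to produce a finite \emph{admissible} set $A=\{\alpha_0,\dots,\alpha_N\}\subset\Z^m_+$ — i.e. an order ideal for the componentwise partial order, $\beta\le\alpha\in A\Rightarrow\beta\in A$ — with $|A|=N+1$, such that the derivative vectors $v_\alpha:=(\mathcal D^\alpha\Phi_0,\dots,\mathcal D^\alpha\Phi_N)\in\mathcal M^{N+1}$, $\alpha\in A$, are linearly independent over $\mathcal M$. Granting such an $A$, the rest is formal. Enumerate its elements in nondecreasing total degree; if $|\alpha_i|=d$, then $A$ contains the whole box $\{\beta:\mathbf 0\le\beta\le\alpha_i\}$, of cardinality $\prod_{l=1}^m(\alpha_{il}+1)\ge 1+\sum_{l=1}^m\alpha_{il}=1+d$, all but $\alpha_i$ itself of degree $<d$; hence at least $d$ of the $\alpha_l$ precede $\alpha_i$, so $|\alpha_i|=d\le i$. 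The independence of $\{v_{\alpha_i}\}$ is exactly (i). For (ii), apply the Leibniz rule entrywise, $\mathcal D^{\alpha_i}(h\Phi_j)=\sum_{\beta\le\alpha_i}\binom{\alpha_i}{\beta}\mathcal D^{\beta}h\cdot\mathcal D^{\alpha_i-\beta}\Phi_j$, and expand the determinant multilinearly along its $N+1$ rows:
$$\det\bigl(\mathcal D^{\alpha_i}(h\Phi_j)\bigr)=\sum_{(\beta_0,\dots,\beta_N)}\Bigl(\prod_{i=0}^N\binom{\alpha_i}{\beta_i}\mathcal D^{\beta_i}h\Bigr)\det\bigl(\mathcal D^{\alpha_i-\beta_i}\Phi_j\bigr)_{0\le i,j\le N},$$
the sum over tuples with $\beta_i\le\alpha_i$. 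A summand vanishes unless the multi-indices $\alpha_i-\beta_i$ are pairwise distinct; but each $\alpha_i-\beta_i$ lies in the order ideal $A$, which has only $N+1$ elements, so in that case the multiset $\{\alpha_i-\beta_i\}_i$ equals $A=\{\alpha_i\}_i$, and comparing sums of multi-indices gives $\sum_i\beta_i=\mathbf 0$, hence $\beta_i=\mathbf 0$ for every $i$. Only the all-zero summand survives, giving $\det(\mathcal D^{\alpha_i}(h\Phi_j))=h^{N+1}\det(\mathcal D^{\alpha_i}\Phi_j)$, which is (ii).

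It remains to construct the admissible set, and here I would proceed greedily. Suppose an order ideal $A_k=\{\alpha_0,\dots,\alpha_k\}$ with $\alpha_0=\mathbf 0$ and with $\{v_{\alpha_0},\dots,v_{\alpha_k}\}$ linearly independent over $\mathcal M$ has been found (the case $k=0$ being $v_{\mathbf 0}=(\Phi_0,\dots,\Phi_N)\not\equiv0$); if $k<N$, I must find a $\gamma$ \emph{addable} to $A_k$ — meaning $A_k\cup\{\gamma\}$ is again an order ideal, equivalently every strict predecessor of $\gamma$ lies in $A_k$ — with $v_\gamma\notin V_k:=\mathrm{span}_{\mathcal M}\{v_{\alpha_0},\dots,v_{\alpha_k}\}$. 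The degree bound is automatic: by the box estimate above an order ideal with $k+1$ elements has no element of degree $\ge k+1$, so every addable $\gamma$ satisfies $|\gamma|\le k+1$. Suppose, for contradiction, that $v_\gamma\in V_k$ for every addable $\gamma$. The heart of the matter is the claim that $V_k$ is then stable under each $\partial/\partial z_s$: this is proved by induction on total degree, repeatedly using $v_{\beta+e_s}=\partial_s v_\beta$ together with the Leibniz identity $\partial_s\bigl(\sum_\alpha c_\alpha v_\alpha\bigr)=\sum_\alpha(\partial_s c_\alpha)v_\alpha+\sum_\alpha c_\alpha v_{\alpha+e_s}$ and the order-ideal structure of $A_k$ to reduce the assertion $v_{\alpha+e_s}\in V_k$ (for $\alpha\in A_k$) to strictly smaller multi-indices and to addable ones, whose vectors lie in $V_k$ by assumption — this is exactly the combinatorial argument of \cite[Proposition 4.5]{Fu}, which I would reproduce. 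Once $V_k$ is $\partial$-stable it contains $v_{\mathbf 0}$ and hence every $v_\beta$, so $\mathrm{rank}_{\mathcal M}\{v_\beta:\beta\in\Z^m_+\}=\dim_{\mathcal M}V_k=k+1\le N$.

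This last inequality contradicts the linear independence of $\Phi_0,\dots,\Phi_N$ over $\C$, by the following elementary fact: if $\mathrm{rank}_{\mathcal M}\{v_\beta\}\le N$, pick $(a_0,\dots,a_N)\in\mathcal M^{N+1}\setminus\{0\}$ with $\sum_j a_j\mathcal D^\beta\Phi_j=0$ for all $\beta$ and with the fewest nonzero entries, and normalize $a_{j_0}=1$; applying $\partial_s$ to the relation at $\beta$ and subtracting the relation at $\beta+e_s$ gives $\sum_j(\partial_s a_j)\mathcal D^\beta\Phi_j=0$ for all $\beta$, a relation with strictly fewer nonzero entries (since $\partial_s a_{j_0}=0$), hence the zero relation; thus all $a_j\in\C$ and $\sum_j a_j\Phi_j=0$ is a nontrivial $\C$-linear relation, which is absurd. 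Therefore the greedy construction never stalls before $|A|=N+1$, completing the proof. The only nonformal point — the one I expect to be the main obstacle — is the $\partial$-stability of $V_k$ under the stalling hypothesis, equivalently the fact that a new admissible multi-index with a new derivative vector can always be adjoined; this requires the careful multi-index bookkeeping carried out in \cite[Proposition 4.5]{Fu}.
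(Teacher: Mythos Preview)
The paper does not give its own proof of this proposition; it simply quotes it from \cite[Proposition~4.5]{Fu} with the prefatory remark ``Repeating the argument in \cite[Proposition 4.5]{Fu}\ldots'' and the afterword that ``$\alpha_0,\ldots,\alpha_N$ are chosen uniquely in an explicit way (see \cite{Fu}).'' So there is no in-paper argument to compare against --- both you and the author ultimately defer to Fujimoto.

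That said, your sketch is a faithful outline of Fujimoto's approach, and the parts you write out in full are correct: the box-counting giving $|\alpha_i|\le i$; the proof of (ii) via the Leibniz expansion together with the pigeonhole observation that the $N+1$ multi-indices $\alpha_i-\beta_i$ all lie in the order ideal $A$ of size $N+1$ and must be pairwise distinct for the determinant not to vanish, forcing $\sum_i\beta_i=\mathbf 0$; and the descent argument showing that $\mathrm{rank}_{\mathcal M}\{v_\beta:\beta\in\Z^m_+\}\le N$ would contradict $\C$-linear independence of the $\Phi_j$. You correctly isolate the one genuinely nonformal step --- the $\partial$-stability of $V_k$ under the stalling hypothesis, equivalently that for $\alpha\in A_k$ the index $\alpha+e_s$ need not itself be addable, so one must iterate through the order-ideal structure --- and defer to \cite{Fu} for it. Since the paper defers to \cite{Fu} for the entire proposition, your proposal is already more detailed than the paper's own treatment.
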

We note that $\alpha_0,\ldots,\alpha_{N}$ are chosen uniquely in an explicit way (see \cite{Fu}). Then we define the general Wronskian of the mapping $\Phi =(\Phi_0,\ldots,\Phi_{N})$ by 
$$ W(\Phi):=\det{({\mathcal D}^{\alpha_i}\Phi_j)}_{0\le i,j\le N}. $$

In \cite{RS}, M. Ru and S. Sogome gave the following lemma on logarithmic derivative.

\begin{proposition}[{see \cite{RS}, Proposition 3.3}]\label{pro2.2}
 Let $L_0,\ldots ,L_{N}$ be linear forms of $N+1$ variables and assume that they are linearly independent. Let $F$ be a meromorphic mapping of the ball $\B^m(R_0)\subset\C^m$ into $\P^{N}(\C)$ with a reduced representation $\tilde F=(F_0,\ldots ,F_{N})$ and let $(\alpha_0,\ldots ,\alpha_N)$ be an admissible set of $F$. Set $l=|\alpha_0|+\cdots +|\alpha_N|$ and take $t,p$ with $0< tl< p<1$. Then, for $0 < r_0 < R_0,$ there exists a positive constant $K$ such that for $r_0 < r < R < R_0$,
$$\int_{S(r)}\biggl |z^{\alpha_0+\cdots +\alpha_N}\dfrac{W_{\alpha_0,\ldots ,\alpha_N}(F_0,\ldots ,F_{N})}{L_0(\tilde F)\cdots L_{N}(\tilde F)}\biggl |^t\sigma_m\le K\biggl (\dfrac{R^{2m-1}}{R-r}T_F(R,r_0)\biggl )^p.$$
\end{proposition}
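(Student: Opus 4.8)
The plan is to reduce the statement to the classical logarithmic derivative lemma applied to the auxiliary holomorphic map obtained by composing $F$ with the linear isomorphism that sends $L_0,\dots,L_N$ to the standard coordinate functions. Since $L_0,\dots,L_N$ are linearly independent linear forms, there is $T\in GL_{N+1}(\C)$ with $L_j = x_j\circ T$ (abusing notation, viewing the $L_j$ as the new coordinates). Let $G = T\circ F$, with reduced representation $\tilde G = (G_0,\dots,G_N) = (L_0(\tilde F),\dots,L_N(\tilde F))$ up to a common nowhere-zero holomorphic factor. The key point is that general Wronskians transform well under linear coordinate changes: by part (ii) of Proposition \ref{2.2} (homogeneity) together with the multilinearity of the determinant $\det({\mathcal D}^{\alpha_i}G_j)$ in the columns $G_j$, we get $W_{\alpha_0,\dots,\alpha_N}(L_0(\tilde F),\dots,L_N(\tilde F)) = (\det T)\cdot W_{\alpha_0,\dots,\alpha_N}(F_0,\dots,F_N)$ whenever $(\alpha_0,\dots,\alpha_N)$ is an admissible set for both $F$ and $G$; and one checks $(\alpha_0,\dots,\alpha_N)$ admissible for $F$ forces it admissible for $G$ since admissibility (the non-vanishing of the Wronskian determinant and the degree bounds $|\alpha_i|\le i$) is preserved by the invertible linear change. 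Moreover $\|\tilde G\| \asymp \|\tilde F\|$ up to bounded factors coming from $T$, so $T_G(R,r_0) = T_F(R,r_0) + O(1)$.

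With this reduction in hand, the integrand becomes
$$\left| z^{\alpha_0+\cdots+\alpha_N}\,\frac{W_{\alpha_0,\dots,\alpha_N}(G_0,\dots,G_N)}{G_0\cdots G_N}\right|^t,$$
which is exactly the quantity controlled by the logarithmic derivative lemma for the map $G$ into $\P^N(\C)$ with respect to the \emph{standard} coordinate hyperplanes $x_0,\dots,x_N$. This is the form of the lemma already available in the literature (e.g. the estimates underlying \cite{Fu} and \cite{RS}): for $0<tl<p<1$ and $r_0<r<R<R_0$ one has
$$\int_{S(r)}\left| z^{\alpha_0+\cdots+\alpha_N}\,\frac{W_{\alpha_0,\dots,\alpha_N}(G_0,\dots,G_N)}{G_0\cdots G_N}\right|^t\sigma_m \le K'\left(\frac{R^{2m-1}}{R-r}T_G(R,r_0)\right)^p.$$
Absorbing the constant $|\det T|^t$ and the $O(1)$ difference between $T_G$ and $T_F$ into the constant $K$ (enlarging $K$ and using monotonicity of $T_F$, plus the elementary inequality $(a+b)^p\le a^p+b^p$ for $p<1$ to separate the $O(1)$ term), we obtain the asserted bound with a possibly larger constant $K$ depending on $r_0$, $t$, $p$, $N$, and the forms $L_j$.

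The main obstacle — and the step deserving genuine care rather than the hand-waving above — is the transformation formula for the general Wronskian under the linear change $T$. Part (ii) of Proposition \ref{2.2} handles multiplication by a scalar meromorphic function, not an arbitrary invertible linear substitution of the functions $\Phi_j$. One must verify that, for a \emph{fixed} admissible tuple $(\alpha_0,\dots,\alpha_N)$, the map $(\Phi_0,\dots,\Phi_N)\mapsto \det({\mathcal D}^{\alpha_i}\Phi_j)_{i,j}$ is alternating multilinear over $\C$ in the arguments $\Phi_j$ (immediate, since ${\mathcal D}^{\alpha_i}$ is $\C$-linear and the determinant is multilinear in columns), hence is intertwined by $GL_{N+1}(\C)$ exactly up to the factor $\det T$. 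One should also confirm that the admissible set for $G$ produced by Proposition \ref{2.2}(i) can be taken equal to that for $F$; if the canonical choice differs, the safe route is to note that \emph{any} admissible tuple (not just the canonical one) yields a valid logarithmic derivative estimate with the stated dependence on $l$, so one may simply run the lemma for $G$ with the same $(\alpha_0,\dots,\alpha_N)$ after checking it is admissible for $G$. Once these linear-algebra bookkeeping points are settled, the rest is a direct citation of the standard logarithmic derivative lemma and the conclusion follows.
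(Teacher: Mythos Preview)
The paper does not prove this proposition at all: it is quoted verbatim as a known result of Ru and Sogome (the citation \cite{RS} in the paper is a typo for \cite{RSo}; the surrounding text attributes it to ``M.~Ru and S.~Sogome''), and is used later as a black box. So there is no ``paper's own proof'' to compare against.

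Your reduction is correct and is in fact the standard way one deduces the general-linear-forms version from the coordinate-hyperplane version. The transformation law $W_{\alpha_0,\dots,\alpha_N}(L_0(\tilde F),\dots,L_N(\tilde F))=(\det T)\,W_{\alpha_0,\dots,\alpha_N}(F_0,\dots,F_N)$ follows immediately from $\C$-linearity of each $\mathcal D^{\alpha_i}$ together with multilinearity of the determinant in its columns; no appeal to part (ii) of Proposition \ref{2.2} is needed, and your worry about admissibility is moot since this very identity shows the Wronskian for $G$ is a nonzero constant multiple of that for $F$. Two minor clean-ups: first, $(L_0(\tilde F),\dots,L_N(\tilde F))$ is already a reduced representation of $G$ (the common zero set of the components is unchanged by an invertible linear map), so the hedge ``up to a common nowhere-zero holomorphic factor'' is unnecessary; second, the inequality $(a+b)^p\le a^p+b^p$ is not needed either, since for $r$ bounded away from $R_0$ one has $T_F(R,r_0)$ bounded below by a positive constant and the $O(1)$ can be absorbed multiplicatively into $K$.

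What your argument does \emph{not} supply is the actual analytic content: the estimate for the coordinate case, which is the substance of the Ru--Sogome lemma and rests on the logarithmic-derivative lemma for meromorphic functions on a ball together with a H\"older/concavity argument. You cite this as ``already available in the literature,'' which is true, but be aware that you are then essentially invoking the very result (in its special case) that the proposition records. As a reduction your write-up is fine; as a self-contained proof it is not.
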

Here $z^{\alpha_i}=z_1^{\alpha_{i1}}\cdots z_m^{\alpha_{im}},$ where $\alpha_i=(\alpha_{i1},\ldots,\alpha_{im})\in\mathbb Z^m_+$ and  
$$W_{\alpha_0,\ldots ,\alpha_N}(F_0,\ldots ,F_{N})=\det (\mathcal D^{\alpha_i}F_j)_{0\le i,j\le N}.$$

The following theorem is due to M. Ru and N. Sibony \cite{RS}
\begin{theorem}[see \cite{RS}]\label{2.4new}
Let $f:\B^1(R_0)\rightarrow\P^N(\C)\ (0<R_0<+\infty)$ be a linearly nondegenerate holomorphic curve with $c_f<+\infty$, and let $\tilde f=(f_0,\ldots,f_N)$ be a reduced representation of $f$. Let $H_1,\ldots,H_q$ be arbitrary hyperplanes in $\P^N(\C)$. Denote by $W(\tilde f)$ the general Wronskian of $(f_0,\ldots,f_N)$.
Then, for every $\epsilon >0$,
\begin{align*}
\int\limits_{S(r)}\max_{K}&\log\prod_{i\in K}\dfrac{\|\tilde f\|}{|H_i(\tilde f)|}\sigma_m+ N_{W(\tilde f)}(r)\le (N +1)T_f(r,r_0)\\
&+\dfrac{N(N+1)}{2}(1+\epsilon)(c_f+\epsilon)T_f(r,r_0)+O(\log T_f(r,r_0)).
\end{align*}
for all $r\in (r_0,R_0)$ outside a subset $E$ with $\int_E\mathrm{exp}((c_f+\epsilon)T_f(r,r_0))dr<+\infty$, where the maximum is taken over all subsets $K$ of $\{1,\ldots,q\}$ so that $\{H_i | i\in K\}$ is linearly independent.
\end{theorem}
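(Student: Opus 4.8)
The plan is to follow the classical Ru--Sibony route: reduce the estimate to the logarithmic derivative inequality of Proposition \ref{pro2.2}, with the two extra pieces of care needed here, namely handling \emph{arbitrary} hyperplanes $H_1,\ldots,H_q$ (hence the $\max$ over linearly independent subsets) and controlling the auxiliary radius via the finite growth index $c_f$. Fix a reduced representation $\tilde f=(f_0,\ldots,f_N)$; since $f$ is linearly nondegenerate, $W(\tilde f)\not\equiv 0$, and on $\B^1(R_0)$ the admissible set furnished by Proposition \ref{2.2} is the unique choice $\alpha_i=(i)$, so the general Wronskian is the ordinary one, $W(\tilde f)=\det\big(f_j^{(i)}\big)_{0\le i,j\le N}$, and $l:=|\alpha_0|+\cdots+|\alpha_N|=N(N+1)/2$, which is exactly the coefficient appearing in the asserted inequality.

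First I would localize the maximum. For each $z\in S(r)$ pick a subset $K=K(z)\subset\{1,\ldots,q\}$ with $\{H_i:i\in K\}$ linearly independent that realizes $\max_K\log\prod_{i\in K}\|\tilde f\|/|H_i(\tilde f)|$; necessarily $\#K\le N+1$. Since there are only finitely many ``linearly independent'' subsets $K$, I fix once and for all, for each of them, a completion to a basis of linear forms $\mathcal L^{(K)}=\big(L_0^{(K)},\ldots,L_N^{(K)}\big)$ whose first $\#K$ members are the $H_i$, $i\in K$. Writing $\#K=k+1$ and using the Cauchy--Schwarz bound $|L_j^{(K)}(\tilde f)|\le \|L_j^{(K)}\|\cdot\|\tilde f\|$ for the ``extra'' forms $j>k$, one gets
$$\log\prod_{i\in K}\frac{\|\tilde f\|}{|H_i(\tilde f)|}=(N+1)\log\|\tilde f\|-\sum_{j=0}^{N}\log|L_j^{(K)}(\tilde f)|+\sum_{j=k+1}^{N}\log\frac{|L_j^{(K)}(\tilde f)|}{\|\tilde f\|}\le (N+1)\log\|\tilde f\|-\sum_{j=0}^{N}\log|L_j^{(K)}(\tilde f)|+O(1),$$
so, taking the maximum over $K$ on the right,
$$\max_{K}\log\prod_{i\in K}\frac{\|\tilde f\|}{|H_i(\tilde f)|}\ \le\ (N+1)\log\|\tilde f\|-\log|W(\tilde f)|+\max_{K}\log\frac{|W(\tilde f)|}{|L_0^{(K)}(\tilde f)\cdots L_N^{(K)}(\tilde f)|}+O(1).$$

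Next I would integrate over $S(r)$ against $\sigma_m$ and add $N_{W(\tilde f)}(r)$. Since $\tilde f$ is holomorphic and $W(\tilde f)\not\equiv0$, Jensen's formula gives $\int_{S(r)}\log|W(\tilde f)|\,\sigma_m=N_{W(\tilde f)}(r)+O(1)$, so the term $-\log|W(\tilde f)|$ cancels against $N_{W(\tilde f)}(r)$ up to $O(1)$; the Jensen form of the characteristic function turns $(N+1)\int_{S(r)}\log\|\tilde f\|\,\sigma_m$ into $(N+1)T_f(r,r_0)+O(1)$. For the surviving term I bound the finite maximum of positive quantities by the sum over $K$, absorb the bounded factors $|z^{l}|\le R_0^{l}$ and $|z|^{-l}\le r_0^{-l}$ on $S(r)$, and apply concavity of $\log$ with respect to the probability measure $\sigma_m$ together with Proposition \ref{pro2.2} applied to $F=f$ with the linearly independent forms $L_0^{(K)},\ldots,L_N^{(K)}$ (whose admissible set is again $\alpha_i=(i)$, so the Wronskian there is precisely $W(\tilde f)$). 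Choosing $t,p$ with $0<tl<p<1$, this yields, for all $r_0<r<R<R_0$,
$$\int_{S(r)}\max_{K}\log\prod_{i\in K}\frac{\|\tilde f\|}{|H_i(\tilde f)|}\,\sigma_m+N_{W(\tilde f)}(r)\ \le\ (N+1)T_f(r,r_0)+\frac{p}{t}\Big(\log T_f(R,r_0)-\log(R-r)\Big)+O(1).$$

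Finally, the growth-index input. Because $c_f<+\infty$, I would take the auxiliary radius $R=R(r):=r+\exp\!\big(-(c_f+\delta)T_f(r,r_0)\big)$ (legitimate for $r$ near $R_0$, since $T_f(r,r_0)\to\infty$), so that $-\log(R-r)=(c_f+\delta)T_f(r,r_0)$, and invoke a Borel-type calculus lemma in the spirit of \cite{RS} to obtain $T_f(R,r_0)\le 2T_f(r,r_0)$, hence $\log T_f(R,r_0)=O(\log T_f(r,r_0))$, for all $r$ outside a set $E$ with $\int_E\exp\big((c_f+\epsilon)T_f(r,r_0)\big)\,dr<+\infty$ (after choosing $\delta$ small in terms of $\epsilon$). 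Taking $t=\big(l(1+\epsilon)\big)^{-1}$, any $p\in\big((1+\epsilon)^{-1},1\big)$, and $\delta\le\epsilon$ gives $\tfrac{p}{t}(c_f+\delta)\le l(1+\epsilon)(c_f+\epsilon)=\tfrac{N(N+1)}{2}(1+\epsilon)(c_f+\epsilon)$, which is the asserted bound. The main obstacle is exactly this last step: arranging the choice of $R(r)$ and the accompanying calculus lemma so that they deliver \emph{simultaneously} the sharp coefficient $\tfrac{N(N+1)}{2}(1+\epsilon)(c_f+\epsilon)$, the error term $O(\log T_f(r,r_0))$, and an exceptional set with the stated exponential-integral smallness; the remainder is the now-standard Wronskian second main theorem machinery, here merely adapted to arbitrary (non-generically positioned) hyperplanes through the bounded completion forms.
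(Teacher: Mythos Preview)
The paper does not supply its own proof of this statement: Theorem~\ref{2.4new} is quoted from Ru--Sibony \cite{RS} and used as a black box in the proof of Theorem~\ref{1.2}. So there is no ``paper's own proof'' to compare against here.

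That said, your outline is the correct Ru--Sibony argument and is essentially complete. The reduction to Proposition~\ref{pro2.2} via completion of each maximizing $K$ to a basis $\mathcal L^{(K)}$, the Jensen cancellation of $\int_{S(r)}\log|W(\tilde f)|\,\sigma_m$ against $N_{W(\tilde f)}(r,r_0)$, the concavity step giving the $\tfrac{p}{t}\big(\log T_f(R,r_0)-\log(R-r)\big)$ error, and the final choice $R(r)=r+\exp\!\big(-(c_f+\delta)T_f(r,r_0)\big)$ together with the Ru--Sibony growth lemma are exactly what is needed; your parameter choices $t=\big(l(1+\epsilon)\big)^{-1}$ and $p\in\big((1+\epsilon)^{-1},1\big)$ with $l=N(N+1)/2$ recover the stated coefficient. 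Two minor remarks: the exceptional-set statement you invoke (that $T_f(R(r),r_0)\le 2T_f(r,r_0)$ off a set $E$ with $\int_E\exp\big((c_f+\epsilon)T_f\big)\,dr<\infty$) is precisely the calculus lemma proved in \cite{RS}, so you should cite it rather than call it ``Borel-type''; and when you bound the maximum over $K$ by the finite sum before applying concavity, you should note that the number of admissible $K$ is bounded independently of $r$, so the resulting additive constant really is $O(1)$.
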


\vskip0.2cm 
\noindent
{\bf 2.4. Meromorphic mappings on $\C^m$.} If $f$ is a meromorphic mapping of $\C^m$ into $\P^N(\C)$ (i.e., $R_0=+\infty$), we will write $T_f(r),N_{\varphi}(r),m_f(r,Q)$ for $T_f(r,1),N_{\varphi}(r,1),m_f(r,1,Q)$ respectively.

Let $\varphi$ be a nonzero meromorphic function on $\mathbb B^m(R_0)$, which is occasionally regarded as a meromorphic map into $\mathbb P^1(\mathbb C)$. The proximity function of $\varphi$ is defined by
$$m(r,\varphi)=\int_{S(r)}\log \max\ (|\varphi|,1)\sigma_m.$$
The Nevanlinna's characteristic function of $\varphi$ is define as follows
$$ T(r,\varphi)=N_{\frac{1}{\varphi}}(r)+m(r,\varphi). $$
Then 
$$T_\varphi (r)=T(r,\varphi)+O(1)\ \text{ as } r\to +\infty.$$
The function $\varphi$ is said to be small (with respect to $f$) if $\|\ T_\varphi (r)=o(T_f(r))$.

The lemma on logarithmic derivative in this case is stated as follows.
\begin{lemma}[{see \cite{NO}}]
Let $f$ be a nonzero meromorphic function on $\C.$ Then we have
$$\biggl\|\quad m\biggl(r,\dfrac{\mathcal D^{\alpha}(\tilde f)}{f}\biggl)=O(\log^+T(r,f))\ (\alpha\in \mathbb Z^m_+).$$
\end{lemma}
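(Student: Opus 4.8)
The statement to be proved is the classical lemma on the logarithmic derivative in several variables: for a nonzero meromorphic function $f$ on $\C^m$ (or more generally on a ball, as the cited reference \cite{NO} handles) and any multi-index $\alpha \in \Z^m_+$, one has the estimate $\bigl\| \ m\bigl(r, \mathcal D^{\alpha}(\tilde f)/f\bigr) = O(\log^+ T(r,f))$, where the double bar indicates the estimate holds outside a Borel exceptional set of finite measure. (Here $\mathcal D^{\alpha}(\tilde f)$ should be read as $\mathcal D^{\alpha} f$, the higher partial derivative of $f$ defined in Section 2.3.) Since this is a well-known result quoted verbatim from Noguchi--Ochiai \cite{NO}, the natural plan is to reduce to the one-variable case and iterate.

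The first step is to reduce the multi-index $\alpha = (\alpha_1,\ldots,\alpha_m)$ to the case $|\alpha| = 1$ by the chain/quotient telescoping identity
$$\frac{\mathcal D^{\alpha} f}{f} = \prod_{k} \frac{\mathcal D^{\beta_{k+1}} f}{\mathcal D^{\beta_k} f},$$
where $0 = \beta_0, \beta_1, \ldots, \beta_{|\alpha|} = \alpha$ is a chain of multi-indices each obtained from the previous by adding one to a single coordinate. Each factor $\mathcal D^{\beta_{k+1}} f / \mathcal D^{\beta_k} f$ is of the form $(\partial g/\partial z_j)/g$ for the meromorphic function $g = \mathcal D^{\beta_k} f$. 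Using $m(r, g_1 g_2) \le m(r, g_1) + m(r, g_2)$ and $T(r, \mathcal D^{\beta_k} f) = O(T(r,f))$ (a standard fact: differentiation does not increase the characteristic beyond a bounded multiple, which itself follows from the one-variable logarithmic derivative lemma applied slice-wise plus an integration), it suffices to prove the single-derivative estimate $\bigl\| \ m\bigl(r, (\partial f/\partial z_j)/f\bigr) = O(\log^+ T(r,f))$ for each $j$.

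The second step establishes this single-variable partial-derivative estimate by integrating the classical one-dimensional logarithmic derivative lemma over the complex lines parallel to the $z_j$-axis, i.e., by slicing $\C^m$. Fix $j$; for almost every choice of the remaining coordinates $z' = (z_i)_{i \ne j}$, the restriction $f_{z'}(z_j) := f(z', z_j)$ is a nonzero meromorphic function on $\C$, and the classical lemma gives $m\bigl(\rho, (f_{z'})'/f_{z'}\bigr) = O(\log^+ T(\rho, f_{z'})) + O(\log \rho)$ for $\rho$ outside a set of finite measure. One then integrates this over the appropriate sphere/ball in the $z'$ variables against the relevant measure, applies Fubini and the concavity of $\log^+$ (Jensen), and compares the resulting averaged one-dimensional characteristics $\int T(\rho, f_{z'}) \, d\mu(z')$ with the $m$-dimensional characteristic $T(r,f)$ — this comparison is a standard integral-geometric identity (the characteristic function of $f$ on $\C^m$ dominates, up to bounded factors and a change of the radial variable, the average of the slice characteristics). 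Handling the exceptional sets requires a Borel-type covering lemma to convert a family of one-dimensional exceptional sets depending on $z'$ into a single $m$-dimensional exceptional set of finite measure; this is the one genuinely technical point. Rather than reproving all of this, I would simply cite \cite{NO} (as the statement already does) or reproduce the short proof of \cite[Lemma]{NO} verbatim, since the result is entirely standard.

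The main obstacle, such as it is, is purely bookkeeping: the slicing argument must track the exceptional sets uniformly in the transverse coordinates and must correctly relate the averaged one-variable characteristic to $T(r,f)$ with the right power of $r$ (the factor $r^{2m-1}$ appearing in the definition of $N$ and $T$ in Section 2.1 reflects exactly this). Since the paper only needs this lemma as a black-box input for the proof of the main second main theorems, and it is lifted directly from \cite{NO}, the honest proof here is the one-line citation; if a self-contained argument is wanted, the reduction above followed by the slice integration is the route, with the Borel exceptional-set calculus being the step that requires care.
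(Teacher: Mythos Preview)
Your proposal is correct and in fact matches the paper exactly: the paper does not prove this lemma at all but simply states it with the citation ``see \cite{NO}'', treating it as a standard black-box input. Your additional sketch of the reduction-and-slicing argument is accurate and more than the paper provides, but for the purposes of this paper the one-line citation you identify as the ``honest proof'' is precisely what is done.
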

 
The following general form of the second main theorem is due to M. Ru \cite{Ru97}
\begin{theorem}[see \cite{Ru97}]\label{2.4}
Let $\C^m\rightarrow\P^N(\C)$ be a linearly nondegenerate holomorphic curve with a reduced representation $\tilde f=(f_0,\ldots,f_N)$. Let $H_1,\ldots,H_q$ be arbitrary hyperplanes in $\P^N(\C)$. Denote by $W(\tilde f)$ the general Wronskian of $(f_0,\ldots,f_N)$.
Then, for every $\epsilon >0$,
$$\int\limits_{S(r)}\max_{K}\log\prod_{i\in K}\dfrac{\|\tilde f\|}{|H_i(\tilde f)|}\sigma_m+ N_{W(\tilde f)}(r)\le (N +1+\epsilon)T_f(r),$$
where the maximum is taken over all subsets $K$ of $\{1,\ldots,q\}$ so that $\{H_i| i\in K\}$ is linearly independent.
\end{theorem}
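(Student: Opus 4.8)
The plan is to follow M. Ru's argument, converting the integrand pointwise into a combination of $\log\|\tilde f\|$, $\log|W(\tilde f)|$, and a logarithmic--derivative remainder, using that for any $N+1$ linearly independent linear forms the general Wronskian of their pullbacks differs from $W(\tilde f)$ only by a nonzero constant determinant.

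First I would prove a pointwise inequality on $S(r)$. There are only finitely many subsets $K=\{i_0<\cdots<i_s\}\subset\{1,\ldots,q\}$ with $\{H_i:i\in K\}$ linearly independent; for each such $K$ I fix once and for all a completion $g_{s+1},\ldots,g_N$ to $N+1$ independent linear forms (no completion is needed when $s=N$, the generic case). Let $(\alpha_0,\ldots,\alpha_N)$ be the admissible set of $(f_0,\ldots,f_N)$ from Proposition \ref{2.2}, let $A_K$ be the constant $(N+1)\times(N+1)$ coefficient matrix of $H_{i_0},\ldots,H_{i_s},g_{s+1},\ldots,g_N$, and write $\phi_0,\ldots,\phi_N$ for the pullbacks $H_{i_0}(\tilde f),\ldots,H_{i_s}(\tilde f),g_{s+1}(\tilde f),\ldots,g_N(\tilde f)$. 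Linearity of $\mathcal{D}^{\alpha_i}$ and multilinearity of the determinant give
$$\det\bigl(\mathcal{D}^{\alpha_i}\phi_j\bigr)_{0\le i,j\le N}=(\det A_K)\,W(\tilde f),$$
with $\det A_K\ne 0$ and with the right-hand side $\not\equiv 0$ since $\phi_0,\ldots,\phi_N$ span the same $\C$-vector space as $f_0,\ldots,f_N$. Factoring $\phi_j$ out of the $j$-th column and using $|\phi_j|\le C\|\tilde f\|$ for all $j$, so that $\|\tilde f\|^{s+1}\prod_{j>s}|g_j(\tilde f)|\le C^{N-s}\|\tilde f\|^{N+1}$, this identity rearranges to, for every independent $K$,
$$\log\prod_{i\in K}\frac{\|\tilde f\|}{|H_i(\tilde f)|}\le (N+1)\log\|\tilde f\|-\log|W(\tilde f)|+\log\bigl|\psi_K\bigr|+O(1),$$
where $\psi_K:=\det(\mathcal{D}^{\alpha_i}\phi_j)/\prod_{j=0}^N\phi_j$ and the $O(1)$ (absorbing $\log C$ and $\log|\det A_K|$) is uniform in the point of $S(r)$ because there are finitely many $K$. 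Taking the maximum over all independent $K$ bounds the full integrand.

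Next I would integrate over $S(r)$ against $\sigma_m$. The first term gives $(N+1)\int_{S(r)}\log\|\tilde f\|\,\sigma_m=(N+1)T_f(r)+O(1)$ by the Jensen-type formula for $T_f$. Since $f$ is a holomorphic curve, its reduced representation is holomorphic, so $W(\tilde f)$ is entire and the Jensen formula gives $\int_{S(r)}\log|W(\tilde f)|\,\sigma_m=N_{W(\tilde f)}(r)+O(1)$, contributing $-N_{W(\tilde f)}(r)+O(1)$. For the remainder, using $\max_K a_K\le\sum_K a_K^+$ and $\int_{S(r)}\log^+|\psi|\,\sigma_m=m(r,\psi)$, it suffices to bound $m(r,\psi_K)$ for each fixed $K$. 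Expanding the determinant $\det(\mathcal{D}^{\alpha_i}\phi_j/\phi_j)=\psi_K$ and applying the logarithmic derivative lemma termwise yields $m(r,\psi_K)=O\bigl(\sum_j\log^+T(r,\phi_j)\bigr)+O(1)=O(\log^+T_f(r))$, since each $\phi_j=(\text{linear form})(\tilde f)$ is entire with $T(r,\phi_j)\le T_f(r)+O(1)$. Summing over the finitely many $K$ preserves this bound. Collecting terms,
$$\int\limits_{S(r)}\max_{K}\log\prod_{i\in K}\frac{\|\tilde f\|}{|H_i(\tilde f)|}\,\sigma_m+N_{W(\tilde f)}(r)\le (N+1)T_f(r)+O(\log^+T_f(r)),$$
and since $f$ is linearly nondegenerate, $T_f(r)\to\infty$, so the error term is $o(T_f(r))\le\epsilon T_f(r)$ for large $r$, which is the assertion.

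I expect the main obstacle to be bookkeeping rather than a conceptual point: one must handle the ``maximum over independent $K$'' uniformly, which works precisely because there are finitely many such $K$, so the completions $g_j$ and the determinants $\det A_K$ contribute only $O(1)$; and one must verify that the single admissible set for $(f_0,\ldots,f_N)$ may be reused for each completed system, which rests on the spanning property above. A secondary subtlety is that the logarithmic derivative lemma holds a priori only for $r$ outside a set of finite Lebesgue measure, so the final inequality is first obtained off such a set; for the stated exceptional-set-free conclusion one invokes the corresponding sharp form of the logarithmic derivative estimate over $\C^m$.
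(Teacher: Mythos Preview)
Your argument is correct and is essentially the standard Cartan--Ru proof that the paper cites from \cite{Ru97} without reproducing; the paper provides no independent proof of this statement, only the remark that Ru's argument for $m=1$ carries over to general $m$ via the general Wronskian and the logarithmic derivative lemma on $\C^m$. One small caveat: the theorem as printed omits the exceptional-set marker ``$\|$'', but as you correctly note the logarithmic derivative lemma only holds off a set of finite measure, and indeed when the paper applies this result (in (4.9)) it does so with the ``$\|$'' convention---so there is no need to appeal to any sharper exceptional-set-free form, and your penultimate display with the $O(\log^+T_f(r))$ error outside a set of finite measure is exactly what is required.
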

We note that, M. Ru proved the above theorem for the case $m=1$, but this theorem also holds for the general case. The proof of this theorem for the general case is the same with that of the special case $m=1$.

\vskip0.2cm
\noindent
{\bf 2.5. Chow weights and Hilbert weights.} We recall the notion of Chow weights and Hilbert weights from \cite{Ru09}.

Let $X\subset\P^N$ be a projective variety of dimension $n$ and degree $\delta$ defined over a number field $k$. The Chow form of $X$ is the unique polynomial, up to a constant scalar, 
$$F_X(\textbf{u}_0,\ldots,\textbf{u}_n) = F_X(u_{00},\ldots,u_{0N};\ldots; u_{n0},\ldots,u_{nN})$$
in $N+1$ blocks of variables $\textbf{u}_i=(u_{i0},\ldots,u_{iN}), i = 0,\ldots,n$ with the following
properties: $F_X$ is irreducible in $k[u_{00},\ldots,u_{nN}]$; $F_X$ is homogeneous of degree $\delta$ in each block $\textbf{u}_i, i=0,\ldots,n$; and $F_X(\textbf{u}_0,\ldots,\textbf{u}_n) = 0$ if and only if $X\cap H_{\textbf{u}_0}\cap\cdots\cap H_{\textbf{u}_n}\ne\varnothing$, where $H_{\textbf{u}_i}, i = 0,\ldots,n$, are the hyperplanes given by
$$u_{i0}x_0+\cdots+ u_{iN}x_N=0.$$

Let ${\bf c}=(c_0,\ldots, c_N)$ be a tuple of real numbers and $t$ be an auxiliary variable. We consider the decomposition
\begin{align*}
F_X(t^{c_0}u_{00},&\ldots,t^{c_N}u_{0N};\ldots ; t^{c_0}u_{n0},\ldots,t^{c_N}u_{nN})\\ 
& = t^{e_0}G_0(\textbf{u}_0,\ldots,\textbf{u}_N)+\cdots +t^{e_r}G_r(\textbf{u}_0,\ldots, \textbf{u}_N).
\end{align*}
with $G_0,\ldots,G_r\in k[u_{00},\ldots,u_{0N};\ldots; u_{n0},\ldots,u_{nN}]$ and $e_0>e_1>\cdots>e_r$. The Chow weight of $X$ with respect to ${\bf c}$ is defined by
\begin{align*}
e_X({\bf c}):=e_0.
\end{align*}
For each subset $J = \{j_0,\ldots,j_n\}$ of $\{0,\ldots,N\}$ with $j_0<j_1<\cdots<j_n,$ we define the bracket
\begin{align*}
[J] = [J]({\bf u}_0,\ldots,{\bf u}_n):= \det (u_{ij_t}), i,t=0,\ldots,n,
\end{align*}
where $\textbf{u}_i = (u_{i_0},\ldots,u_{iN})\ (1\le i\le n)$ denote the blocks of $N+1$ variables. Let $J_1,\ldots,J_\beta$ with $\beta=\binom{N+1}{n+1}$ be all subsets of $\{0,\ldots,N\}$ of cardinality $n+1$.

Then $F_X$ can be written as a homogeneous polynomial of degree $\delta$ in $[J_1],\ldots,[J_\beta]$. We may see that for $\textbf{c}=(c_0,\ldots,c_N)\in\R^{N+1}$ and for any $J$ among $J_1,\ldots,J_\beta$,
\begin{align*}
\begin{split}
[J](t^{c_0}u_{00},\ldots,t^{c_N}u_{0N},&\ldots,t^{c_0}u_{n0},\ldots,t^{c_N}u_{nN})\\
&=t\sum_{j\in J}c_j[J](u_{00},\ldots,u_{0N},\ldots,u_{n0},\ldots,u_{nN}).
\end{split}
\end{align*}

For $\textbf{a} = (a_0,\ldots,a_N)\in\mathbb Z^{N+1}$ we write ${\bf x}^{\bf a}$ for the monomial $x^{a_0}_0\cdots x^{a_N}_N$. Denote by $k[x_0,\ldots,x_N]_u$ the vector space of homogeneous polynomials in $k[x_0,\ldots,x_N]$ of degree $u$ (including $0$). For an ideal $I$ in $k[x_0,\ldots,x_N]$, we put $I_u :=k[x_0,\ldots,x_N]_u\cap I$. Let $I(X)$ be the prime ideal in $k[x_0,\ldots,x_N]$ defining $X$.  The Hilbert function $H_X$ of $X$ is defined by, for $u = 1, 2,\ldots,$
\begin{align*}
H_X(u):=\dim (k[x_0,\ldots,x_N]_u/I(X)_u).
\end{align*}
By the usual theory of Hilbert polynomials,
\begin{align*}
H_X(u)=\delta\cdot\frac{u^N}{N!}+O(u^{N-1}).
\end{align*}
The $u$-th Hilbert weight $S_X(u,{\bf c})$ of $X$ with respect to the tuple ${\bf c}=(c_0,\ldots,c_N)\in\mathbb R^{N+1}$ is defined by
\begin{align*}
S_X(u,{\bf c}):=\max\left (\sum_{i=1}^{H_X(u)}{\bf a}_i\cdot{\bf c}\right),
\end{align*}
where the maximum is taken over all sets of monomials ${\bf x}^{{\bf a}_1},\ldots,{\bf x}^{{\bf a}_{H_X(u)}}$ whose residue classes modulo $I$ form a basis of $k[x_0,\ldots,x_N]_u/I_u.$

The following theorems are due to J. Evertse and R. Ferretti.
\begin{theorem}[{Theorem 4.1 \cite{EF1}}]\label{2.12}
Let $X\subset\P^N$ be an algebraic variety of dimension $n$ and degree $\delta$ defined over a number field $k$. Let $u>\delta$ be an integer and let ${\bf c}=(c_0,\ldots,c_N)\in\mathbb R^{N+1}_{\geqslant 0}$.
Then
$$ \frac{1}{uH_X(u)}S_X(u,{\bf c})\ge\frac{1}{(n+1)\delta}e_X({\bf c})-\frac{(2n+1)\delta}{u}\cdot\left (\max_{i=0,\ldots,N}c_i\right). $$
\end{theorem}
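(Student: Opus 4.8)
\textbf{Proof proposal for Theorem \ref{1.5}.}

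The plan is to follow Vojta's dictionary and mirror the proof of Theorem \ref{1.1}, replacing the Nevanlinna-theoretic ingredients (second main theorem for hyperplanes, logarithmic derivative lemma, general Wronskian) by their Diophantine counterparts (Schmidt's subspace theorem for linear forms, together with the height machinery). The two genuinely new ingredients are the two key lemmas advertised in the introduction: Lemma \ref{3.2}, the ``replacing hypersurfaces'' lemma for an arbitrary family, which should be a purely algebro-geometric statement over $\bar k$ and hence usable verbatim in the arithmetic setting; and Lemma \ref{3.1}, the elementary inequality asserting that a product of powers of $n$ positive reals with exponents summing appropriately is bounded above by the $\Delta$-th power of their product. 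Neither lemma sees the base field, so I would quote both once they are established in Section 3.

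First I would fix a place $v\in S$ and an infinite sequence (or, after the usual argument, a Zariski-dense set) of points ${\bf x}\in\P^N(k)$ and reduce to controlling $\sum_{j=1}^q \frac{1}{d_j}\lambda_{Q_j,v}({\bf x})$ for each $v$ separately. At a given ${\bf x}$ and $v$, order the polynomials so that $\lambda_{Q_1,v}({\bf x})\ge\lambda_{Q_2,v}({\bf x})\ge\cdots$; by the distributive-constant bound only a controlled number of the largest contributions matter, and one extracts from $\{Q_1,\ldots,Q_q\}$ a subfamily whose supports intersected with $V$ have codimension behaving like $\sharp\Gamma/\Delta$. This is exactly where Lemma \ref{3.2} is invoked: it produces, from the chosen hypersurfaces, a family of $n+1$ (or fewer) \emph{replacing} hypersurfaces in general position in $V$ whose product of the relevant local Weil functions dominates, up to the factor governed by $\Delta$ via Lemma \ref{3.1}, the sum $\sum_j \frac{1}{d_j}\lambda_{Q_j,v}({\bf x})$. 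After replacing $V$ by its image under the Veronese-type embedding attached to the space of homogeneous polynomials of degree $d=\mathrm{lcm}(d_j)$, the replacing hypersurfaces become hyperplane sections, so one is reduced to $n+1$ linear forms in general position on a projective variety $\hat V\subset\P^{H_V(d)-1}$.

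At that point I would apply the Evertse–Ferretti form of the subspace theorem for projective varieties (the arithmetic analogue of Theorem \ref{2.4}): using the Hilbert-weight estimate of Theorem \ref{2.12} together with the Chow-weight computation, one converts the linear-forms subspace theorem into the inequality $\sum_{v\in S}\sum_{\text{chosen } j}\lambda_{H_j,v}\le (H_V(d)\text{-normalized})(n+1+\epsilon)h$, and unwinding the Veronese normalization turns the leading constant into $(n+1)$. Propagating the factor $\Delta$ through Lemma \ref{3.1} and summing over $v\in S$ gives $\sum_{v\in S}\sum_{j=1}^q\frac{\lambda_{Q_j,v}({\bf x})}{\deg Q_j}\le(\Delta(n+1)+\epsilon)h({\bf x})$ for all ${\bf x}$ outside a finite union of proper subvarieties — the exceptional set arises, as usual, because the subspace theorem produces finitely many hyperplanes containing all but finitely many solutions, and one runs a Noetherian-induction / Zariski-closure argument over $V$.

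The main obstacle I anticipate is \emph{bookkeeping the combinatorics of Lemma \ref{3.2} uniformly in ${\bf x}$ and $v$}: the subfamily of hypersurfaces that carries the dominant Weil-function mass depends on ${\bf x}$ and $v$, so one must split $\P^N(k)$ (resp. the pairs $({\bf x},v)$) into finitely many classes according to which subset $\Gamma$ realizes (or nearly realizes) the distributive constant, check that in each class the codimension count $\dim V-\dim(\bigcap_{j\in\Gamma}Q_j^*\cap V)$ is exactly what feeds Lemma \ref{3.1}, and ensure the resulting replacing hypersurfaces can be chosen from a \emph{finite} menu (independent of ${\bf x}$) so that only finitely many applications of the subspace theorem are needed. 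A secondary technical point is matching the truncation-free arithmetic statement with the archimedean/non-archimedean normalization of $\lambda_{Q_j,v}$ so that the constant is genuinely $\Delta(n+1)$ and not merely $\Delta(n+1)$ times a harmless but unwanted factor; this is handled by the standard degree-$d$ Veronese renormalization already used in \cite{Ru09} and \cite{EF2}, but it must be carried out carefully in the presence of mixed degrees $d_j$.
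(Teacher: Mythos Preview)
There is a mismatch you should be aware of: the displayed statement is Theorem~\ref{2.12}, which in this paper is a \emph{quoted} result of Evertse--Ferretti and carries no proof here; your proposal is explicitly for Theorem~\ref{1.5}. I will treat the proposal as aimed at Theorem~\ref{1.5} and compare it with the paper's actual proof of that theorem.

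Your overall strategy is correct and matches the paper: reduce to a common degree $d$, at each $({\bf x},v)$ reorder the $Q_i$ by the size of $\|Q_i({\bf x})\|_v$, invoke Lemma~\ref{3.2} to produce $n+1$ replacing hypersurfaces $P_{i,0},\ldots,P_{i,n}$ in general position on $V$, use Lemma~\ref{3.1} to absorb the gap between the original family and the replacing one into the factor $\Delta$, then feed the resulting linear data into the Evertse--Ferretti Hilbert/Chow-weight machinery (Theorem~\ref{2.12} and Lemma~\ref{2.13}) and conclude with the Schmidt/Schlickewei subspace theorem for linear forms. Your anticipated ``main obstacle'' --- making the replacing hypersurfaces come from a \emph{finite} menu independent of ${\bf x}$ and $v$ --- is handled in the paper exactly by enumerating \emph{all} permutations $\mathcal I=\{I_1,\ldots,I_{n_0}\}$ of $\{1,\ldots,q\}$ up front and building the $P_{i,j}$ once and for all from each $I_i$; this is the finite menu.

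The one substantive difference in execution: you propose the standard degree-$d$ Veronese embedding $V\hookrightarrow\P^{H_V(d)-1}$ and then view the (finitely many) replacing hypersurfaces as hyperplane sections there. The paper instead uses the custom morphism
\[
\Phi:V\to\P^{l-1},\qquad l=n_0(n+1)=(q!)(n+1),\qquad \Phi=(P_{1,0}:\cdots:P_{n_0,n}),
\]
sets $Y=\Phi(V)$, and runs the Hilbert-weight argument directly on $Y$. This packaging has two advantages over the raw Veronese route: the coordinates of $Y$ \emph{are} the replacing hypersurfaces, so the linear forms $L_{s,v,{\bf x}}$ needed for the subspace theorem are monomials in the $y_{i,j}$ (hence only finitely many, as required), and the general-position condition $\bigcap_j P_{i,j}^*\cap V=\varnothing$ translates immediately into the hypothesis of Lemma~\ref{2.13} for each block. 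Your Veronese approach would also work, but you would still need to manufacture exactly this finite set of linear forms on $\hat V$ and verify the analogue of Lemma~\ref{2.13} for them; in effect you would rebuild $\Phi$ inside the Veronese. So the two routes converge, with the paper's being slightly more direct.
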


\begin{lemma}[{see \cite[Lemma 5.1]{EF2}, also \cite{Ru09}}]\label{2.13}
Let $Y\subset\P^N$ be an algebraic variety of dimension $n$ and degree $\delta$ defined over a number field $k$. Let ${\bf c}=(c_1,\ldots, c_q)$ be a tuple of positive reals. Let $\{i_0,\ldots,i_n\}$ be a subset of $\{1,\ldots,q\}$ such that
$$Y \cap \{y_{i_0}=\cdots =y_{i_n}=0\}=\varnothing.$$
Then
$$e_Y({\bf c})\ge (c_{i_0}+\cdots +c_{i_n})\delta.$$
\end{lemma}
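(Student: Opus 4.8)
The plan is to derive the inequality directly from the definition of the Chow weight, by evaluating the $t$-weighted decomposition of the Chow form $F_Y$ at the tuple of standard basis vectors indexed by $i_0,\ldots,i_n$ and then exploiting both the multi-homogeneity of $F_Y$ and its defining vanishing property.

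First I would fix, for each index $j$, the standard basis vector $e_j$ of $\C^{N+1}$, so that $H_{e_j}$ is the coordinate hyperplane $\{y_j=0\}$, and set ${\bf u}_l:=e_{i_l}$ for $l=0,\ldots,n$. Then $H_{e_{i_0}}\cap\cdots\cap H_{e_{i_n}}=\{y_{i_0}=\cdots=y_{i_n}=0\}$, so the hypothesis gives $Y\cap H_{e_{i_0}}\cap\cdots\cap H_{e_{i_n}}=\varnothing$, and hence, by the defining property of the Chow form ($F_Y=0$ if and only if the corresponding hyperplanes meet $Y$), we get $F_Y(e_{i_0},\ldots,e_{i_n})\ne 0$. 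Next I would substitute $u_{lj}\mapsto t^{c_j}u_{lj}$ into the decomposition
$$F_Y(\ldots,t^{c_j}u_{lj},\ldots)=t^{e_0}G_0({\bf u}_0,\ldots,{\bf u}_n)+\cdots+t^{e_r}G_r({\bf u}_0,\ldots,{\bf u}_n),\qquad e_0>\cdots>e_r,$$
that defines $e_Y({\bf c})=e_0$, and evaluate both sides at ${\bf u}_l=e_{i_l}$. Under the substitution the $l$-th block $e_{i_l}$ becomes $t^{c_{i_l}}e_{i_l}$; since $F_Y$ is homogeneous of degree $\delta$ in each of its $n+1$ blocks, the left-hand side equals $t^{\delta(c_{i_0}+\cdots+c_{i_n})}F_Y(e_{i_0},\ldots,e_{i_n})$, a single power of $t$ with nonzero coefficient, while the right-hand side equals $\sum_{s=0}^{r}t^{e_s}G_s(e_{i_0},\ldots,e_{i_n})$. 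Comparing coefficients of powers of $t$, the exponent $\delta(c_{i_0}+\cdots+c_{i_n})$ must occur among $e_0,\ldots,e_r$, with the corresponding coefficient equal to $F_Y(e_{i_0},\ldots,e_{i_n})\ne 0$; in particular $\delta(c_{i_0}+\cdots+c_{i_n})\le e_0=e_Y({\bf c})$, which is the asserted inequality.

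I do not expect a real obstacle. The only point that needs a word of care is that the exponents in the $t$-expansion of $F_Y(\ldots,t^{c_j}u_{lj},\ldots)$ are real numbers, not necessarily integers, and that no cancellation can destroy the term of weight $\delta(c_{i_0}+\cdots+c_{i_n})$; but this is automatic, since after specializing every block to the vector $e_{i_l}$ the left-hand side collapses to the single monomial $t^{\delta(c_{i_0}+\cdots+c_{i_n})}F_Y(e_{i_0},\ldots,e_{i_n})$ with nonzero coefficient, which forces its exponent to coincide with one of the $e_s$. Note that the positivity of the $c_j$ plays no role in this particular argument; it is needed only for the numerical estimates used elsewhere in the paper.
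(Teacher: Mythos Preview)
Your argument is correct. Note, however, that the paper does not give its own proof of this lemma: it is quoted from Evertse--Ferretti \cite{EF2} and Ru \cite{Ru09}, so there is nothing in the paper to compare against. Your approach---specializing each block ${\bf u}_l$ to the standard basis vector $e_{i_l}$, using the multi-homogeneity of $F_Y$ to collapse the weighted expansion to a single nonzero monomial $t^{\delta(c_{i_0}+\cdots+c_{i_n})}F_Y(e_{i_0},\ldots,e_{i_n})$, and then reading off that this exponent must appear among the $e_s$---is exactly the standard proof one finds in the cited references.
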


\section{Distributive constant and lemma on replacing hypersurfaces}

We prove the following two key lemmas of this papers.

\begin{lemma}\label{3.1}
Let $t_0,t_1,\ldots,t_n$ be $n+1$ integers such that $1=t_0<t_1<\cdots <t_n$, and let $\Delta =\underset{1\le s\le n}\max\dfrac{t_s-t_0}{s}$. 
Then for every $n$ real numbers $a_0,a_1,\ldots,a_{n-1}$  with $a_0\ge a_1\ge\cdots\ge a_{n-1}\ge 1$, we have
$$ a_0^{t_1-t_0}a_1^{t_2-t_1}\cdots a_{n-1}^{t_{n}-t_{n-1}}\le (a_0a_1\cdots a_{n-1})^{\Delta}.$$
\end{lemma}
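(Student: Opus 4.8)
The plan is to reduce the claimed inequality to a comparison of exponents on each factor $a_i$, exploiting the fact that the $a_i$ are nonincreasing and all at least $1$. Taking logarithms and writing $b_i=\log a_i\ge 0$ with $b_0\ge b_1\ge\cdots\ge b_{n-1}$, the inequality becomes
$$\sum_{s=1}^{n}(t_s-t_{s-1})\,b_{s-1}\ \le\ \Delta\sum_{s=1}^{n}b_{s-1}.$$
So it suffices to show that the linear functional $L(b_0,\ldots,b_{n-1})=\sum_{s=1}^n\bigl((t_s-t_{s-1})-\Delta\bigr)b_{s-1}$ is $\le 0$ on the cone $b_0\ge b_1\ge\cdots\ge b_{n-1}\ge 0$. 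The standard device here is Abel summation (summation by parts): write $b_{s-1}=\sum_{j=s-1}^{n-1}(b_j-b_{j+1})$ with the convention $b_n=0$, so that
$$L(b)=\sum_{k=0}^{n-1}(b_k-b_{k+1})\sum_{s=1}^{k+1}\bigl((t_s-t_{s-1})-\Delta\bigr)=\sum_{k=0}^{n-1}(b_k-b_{k+1})\bigl((t_{k+1}-t_0)-(k+1)\Delta\bigr).$$
Since each $b_k-b_{k+1}\ge 0$, it is enough to check that every bracketed coefficient is $\le 0$, i.e. $t_{k+1}-t_0\le (k+1)\Delta$ for $k=0,\ldots,n-1$.

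But that last inequality is immediate from the definition of $\Delta$: for each $s$ with $1\le s\le n$ we have $\Delta\ge (t_s-t_0)/s$, hence $t_s-t_0\le s\Delta$, which is exactly the needed bound with $s=k+1$. This closes the argument. (One should note the mild index clash in the statement: the $n$ numbers are written $a_0,a_2,\ldots,a_{n-1}$, evidently a typo for $a_0,a_1,\ldots,a_{n-1}$, and I will read them as the latter; the product on the left telescopes in the exponents as $\sum_{s=1}^n(t_s-t_{s-1})=t_n-t_0$, so in the degenerate case $b_0=\cdots=b_{n-1}$ both sides agree iff $t_n-t_0=n\Delta$, consistent with $\Delta$ being a maximum.)

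The only real content is the Abel-summation rearrangement, and there is no genuine obstacle — the monotonicity hypothesis $a_0\ge a_1\ge\cdots\ge a_{n-1}\ge 1$ is precisely what makes the partial sums $b_{s-1}$ expressible as nonnegative combinations of the nonnegative differences $b_k-b_{k+1}$, and the definition of $\Delta$ as a maximum over $s$ is precisely what is needed to sign the coefficients. If one prefers to avoid logarithms, the same computation can be run multiplicatively: replace $a_{s-1}$ by $\prod_{j\ge s-1}(a_j/a_{j+1})$ (with $a_n:=1$), regroup the product on the left so that the factor $a_{k+1}/a_{k+2}\ge 1$ appears with total exponent $t_{k+1}-t_0$, and bound it by its exponent-$(k+1)\Delta$ power using $t_{k+1}-t_0\le (k+1)\Delta$ together with $a_{k+1}/a_{k+2}\ge 1$. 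Either route is a short computation once the regrouping is set up; I would present the logarithmic version for transparency.
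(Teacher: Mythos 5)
Your proof is correct. The logarithmic reformulation, the Abel summation identity $\sum_{s=1}^{n}((t_s-t_{s-1})-\Delta)b_{s-1}=\sum_{k=0}^{n-1}(b_k-b_{k+1})\bigl((t_{k+1}-t_0)-(k+1)\Delta\bigr)$ with the convention $b_n=0$, and the two sign checks are all sound; in particular you use the hypothesis $a_{n-1}\ge 1$ exactly where it is needed, namely to make the last difference $b_{n-1}-b_n=b_{n-1}$ nonnegative, and the coefficient bound $t_{k+1}-t_0\le (k+1)\Delta$ is indeed immediate from the definition of $\Delta$ as a maximum. This is, however, a genuinely different execution from the paper's. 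The paper argues multiplicatively: it fixes an index $s$ attaining the maximum in the definition of $\Delta$, first derives the auxiliary inequalities $t_s-t_{s-k}\ge k\Delta$ and $t_{s+k}-t_s\le k\Delta$, and then introduces a recursively defined exponent sequence $m_n=\Delta$, $m_{j-1}=t_j-t_{j-1}+\max\{0,m_j-\Delta\}$, which shifts the excess of each exponent over $\Delta$ from a smaller variable onto the next larger one; the maximality of $s$ is used to show that the shifting terminates with $m_s=\Delta$, that $m_j\ge\Delta$ for $j<s$, and ultimately that $m_0=\Delta$, so the chain of substitutions ends exactly at $(a_0\cdots a_{n-1})^{\Delta}$. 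Your route needs only the ``easy'' direction $t_k-t_0\le k\Delta$ for every $k$, requires no identification of the maximizing index and no case analysis, and so is shorter and more transparent; the multiplicative regrouping you sketch at the end is essentially the redistribution the paper carries out by hand, so your logarithmic version can be viewed as a streamlined form of the paper's argument rather than a reproduction of it.
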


\begin{proof}
Let $s$ be an index, $1\le s\le n$ such that $\Delta=\dfrac{t_s-t_0}{s}$. 

If there exists an index $k (1\le k\le s-1)$ such that $t_s-t_{s-k}< k\Delta$ then $t_{s-k}-t_0>(s-k)\Delta$, and hence $\dfrac{t_{s-k}-t_0}{s-k}>\Delta$, which contradicts the maximality of $\Delta$. Therefore, $t_s-t_{s-k}\ge k\Delta$ for all $1\le k\le s-1$.

Also, if there exists an index $k (1\le k\le n-s)$ such that $t_{s+k}-t_s>k\Delta$ then $t_{s+k}-t_0>(s+k)\Delta$, and hence $\dfrac{t_{s+k}-t_0}{s+k}>\Delta$, which also contradicts the maximality of $\Delta$. Therefore, we also have $t_{s+k}-t_s\le k\Delta$ for all $1\le k\le n-s$.

We set $m_n=\Delta$ and define
\begin{align*}
m_{n-1}&=t_{n}-t_{n-1}+\max\{0,m_n-\Delta\}, \\
m_{n-2}&=t_{n-1}-t_{n-2}+\max\{0,m_{n-1}-\Delta\},\\
\ldots&\ldots\\
m_{0}&=t_{1}-t_{0}+\max\{0,m_1-\Delta\}.
\end{align*}

Denote by $u$ the smallest index such that $s\le u\le n$ and $m_u\le \Delta$. Suppose that $u>s$ then $m_j>\Delta\ (\forall s\le j\le u-1)$, and hence we have the following estimate:
\begin{align*}
m_{u-1}&=t_u-t_{u-1}>\Delta,\\ 
m_{u-2}&=t_{u-1}-t_{u-2}+m_{u-1}-\Delta=t_u-t_{u-2}-\Delta>\Delta,\\
m_{u-3}&=t_{u-2}-t_{u-3}+m_{u-2}-\Delta=t_u-t_{u-3}-2\Delta>\Delta,\\ 
\ldots&\ldots\\
m_{s}&=t_{s+1}-t_{s}+m_{s+1}-\Delta=t_u-t_s-(u-s-1)\Delta>\Delta.
\end{align*}
This implies that
$$ t_u-t_s>(u-s)\Delta. $$
This is a contradiction. Therefore, the supposition is false. Then $u=s$, and hence $m_s\le \Delta$.

We easily have the following estimate:
\begin{align*}
m_{s-1}&=t_{s}-t_{s-1}\ge\Delta,\\
m_{s-2}&=t_{s-1}-t_{s-2}+m_{s-1}-\Delta=t_s-t_{s-2}-\Delta\ge 2\Delta-\Delta=\Delta,\\
m_{s-3}&=t_{s-2}-t_{s-3}+m_{s-2}-\Delta=t_s-t_{s-3}-2\Delta\ge 3\Delta-2\Delta=\Delta,\\
\ldots&\ldots\\
m_{1}&=t_{2}-t_{1}+m_{2}-\Delta=t_s-t_{1}-(s-2)\Delta\ge (s-1)\Delta-(s-2)\Delta=\Delta,\\
m_{0}&=t_{1}-t_{0}+m_{1}-\Delta=t_s-t_{0}-(s-1)\Delta= s\Delta-(s-1)\Delta=\Delta.
\end{align*}
On the other hand, for each $i\in\{0,\ldots,n-2\}$ we have
$$\dfrac{a_i^{m_i}a_{i+1}^{\Delta}}{a_i^{t_{i+1}-t_i}a_{i+1}^{m_{i+1}}}=\dfrac{a_{i}^{\max\{0,m_{i+1}-\Delta\}}}{a_{i+1}^{m_{i+1}-\Delta}}\ge\left(\dfrac{a_i}{a_{i+1}}\right)^{\max\{0,m_{i+1}-\Delta\}}\ge 1.$$
This implies that
$$a_i^{t_{i+1}-t_i}a_{i+1}^{m_{i+1}}\le a_i^{m_i}a_{i+1}^{\Delta},$$
for every $0\le i\le n-2$. Then, we easily have that
\begin{align*}
a_0^{t_1-t_0}a_1^{t_2-t_1}\cdots a_{n-1}^{t_{n}-t_{n-1}}&= a_0^{t_1-t_0}a_1^{t_2-t_1}\cdots a_{n-2}^{t_{n-1}-t_{n-2}}a_{n-1}^{m_{n-1}}\\ 
&\le a_0^{t_1-t_0}a_1^{t_2-t_1}\cdots a_{n-2}^{t_{n-2}-t_{n-3}}a_{n-2}^{m_{n-2}}a_{n-1}^{\Delta}\\
&\le a_0^{t_1-t_0}a_1^{t_2-t_1}\cdots a_{n-2}^{t_{n-3}-t_{n-4}}a_{n-3}^{m_{n-3}}a_{n-2}^{\Delta}a_{n-1}^{\Delta}\\
&\ldots\\
&\le a_0^{t_1-t_0}a_1^{m_1}a_2^\Delta\cdots a_{n-1}^{\Delta}\\
&\le a_0^{m_0}a_1^{\Delta}a_2^\Delta\cdots a_{n-1}^{\Delta}\\
&=(a_0a_1\cdots a_{n-1})^{\Delta}.
\end{align*}
We complete the proof of the lemma.
\end{proof}

Let $k$ be a number field, and $\bar k$ be its algebraic closure. Let $V$ be a projective subvariety of $\P^N$ of dimension $n$ defined over $k$. In this paper, we fix a homogeneous coordinates system $(x_0:\cdots:x_N)$ of $\P^N$. 

We recall that a hypersurface $Q$ in $\P^N(k)$ is a nonzero homogeneous polynomial in $\bar{k}[x_0,\ldots,x_N]$ and $Q^*$ denotes its support (in $V(\bar k)$).

\begin{lemma}\label{3.2}
Let $k$ be a number field and let $V$ be a projective subvariety of $\P^N(k)$ of dimension $n$. Let $Q_0,\ldots,Q_{l}$ be $l$ hypersurfaces in $\P^N(k)$ of the same degree $d\ge 1$, such that $\bigcap_{i=0}^{l}Q_i^*\cap V(\bar k)=\varnothing$ and
$$\dim\left (\bigcap_{i=0}^{s}Q_i^*\right )\cap V(\bar k)=n-u\ \forall t_{u-1}\le s<t_u,1\le u\le n,$$
where $t_0,t_1,\ldots,t_n$ integers with $0=t_0<t_1<\cdots<t_n=l$. Then there exist $n+1$ hypersurfaces $P_0,\ldots,P_n$ in $\P^N(k)$ of the forms
$$P_u=\sum_{j=0}^{t_{u}}c_{uj}Q_j, \ c_{uj},\ u=0,\ldots,n,$$
such that $\left (\bigcap_{u=0}^{n}P_u^*\right )\cap V(\bar k)=\varnothing.$
\end{lemma}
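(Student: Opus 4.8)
The plan is to construct the hypersurfaces $P_0,\ldots,P_n$ one block at a time, choosing each $P_u$ as a generic linear combination of $Q_0,\ldots,Q_{t_u}$ so that it cuts the relevant intersection dimension down by exactly one. The key observation is that if $Z$ is a subvariety of $V(\bar k)$ of pure dimension $n-u$ contained in no component of $\bigcup_{j=0}^{t_u}Q_j^*$'s complement — more precisely, if $Z \not\subset Q_j^*$ for at least one $j \le t_u$ on each irreducible component of $Z$ — then a generic combination $P_u = \sum_{j=0}^{t_u} c_{uj}Q_j$ satisfies $\dim(Z \cap P_u^*) \le n-u-1$. I would first verify this genericity statement: for a fixed component $Z'$ of $Z$, the set of coefficient tuples $(c_{u0},\ldots,c_{ut_u})$ for which $Z' \subset P_u^*$ is a proper linear subspace (it is nonempty only if all $Q_j$ vanish on $Z'$, which is excluded), so avoiding finitely many such subspaces — one per component of $Z$ — is possible over the infinite field $\bar k$ (and in fact over $k$, since $k$ is infinite).

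The induction runs as follows. Set $Z_{-1} = V(\bar k)$, which has dimension $n = n - 0$. Having chosen $P_0,\ldots,P_{u-1}$ with $\dim\bigl(\bigcap_{v=0}^{u-1}P_v^* \cap V(\bar k)\bigr) \le n-u$, write $Z_{u-1}$ for this intersection. The crucial input from the hypothesis is that $\dim\bigl(\bigcap_{i=0}^{t_u} Q_i^* \cap V(\bar k)\bigr) = n - (u+1) < n - u$ (using $t_{u-1} \le t_u < t_{u+1}$ gives the stated dimension at the index $s = t_u$, namely $n - u$... ), so no component of $Z_{u-1}$ of dimension $n-u$ can be contained in all of $Q_0^*,\ldots,Q_{t_u}^*$ simultaneously — hence on each top-dimensional component $Z'$ of $Z_{u-1}$ there is some $j \le t_u$ with $Z' \not\subset Q_j^*$. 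Applying the genericity lemma, choose $P_u = \sum_{j=0}^{t_u} c_{uj}Q_j$ so that it meets every such component of $Z_{u-1}$ in dimension $\le n-u-1$; it then automatically meets the lower-dimensional components in dimension $\le n-u-1$ as well, so $Z_u := Z_{u-1} \cap P_u^*$ has $\dim Z_u \le n-u-1$. Note $P_u$ has degree $d$ since all $Q_j$ do.

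Continuing to $u = n$: after choosing $P_0,\ldots,P_{n-1}$ we have $\dim Z_{n-1} \le 0$, i.e. $Z_{n-1}$ is a finite set of points of $V(\bar k)$, and since $\bigcap_{i=0}^{l} Q_i^* \cap V(\bar k) = \varnothing$ with $t_n = l$, each such point lies outside some $Q_j^*$ with $j \le t_n = l$; a generic $P_n = \sum_{j=0}^{l} c_{nj}Q_j$ avoids this finite set entirely, giving $Z_n = \bigcap_{u=0}^n P_u^* \cap V(\bar k) = \varnothing$, as required.

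The main obstacle I anticipate is bookkeeping the relationship between the block-boundary indices $t_u$ and the dimension drop: one must be careful that at the stage producing $P_u$ the available hypersurfaces are exactly $Q_0,\ldots,Q_{t_u}$, and that the hypothesis $\dim\bigl(\bigcap_{i=0}^s Q_i^* \cap V(\bar k)\bigr) = n-u$ for $t_{u-1}\le s < t_u$ (together with the jump at $s = t_u$) is precisely what guarantees that the finitely many $Q_j$ available at step $u$ do not all contain a given top-dimensional component of the running intersection $Z_{u-1}$. The genericity/Bertini-type step itself is routine over an infinite field; the combinatorial alignment of indices is where care is needed, and one should state the inductive hypothesis as "$\dim Z_{u-1} \le n-u$ and every component of $Z_{u-1}$ of dimension $n-u$ fails to be contained in at least one of $Q_0^*,\ldots,Q_{t_u}^*$" to make the induction close cleanly.
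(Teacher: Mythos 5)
Your proposal is correct and takes essentially the same route as the paper's proof: one constructs $P_u$ inductively as a linear combination of $Q_0,\ldots,Q_{t_u}$ whose coefficient vector avoids the finitely many proper subspaces of $k^{t_u+1}$ attached to the top-dimensional irreducible components of the running intersection, the hypothesis at $s=t_u$ (dimension $n-u-1$, emptiness when $u=n$) guaranteeing that these subspaces are proper. The only blemish is the parenthetical ``namely $n-u$'': at $s=t_u$ the hypothesis gives dimension $n-(u+1)$, which is exactly the displayed claim you actually use, so the argument stands.
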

\begin{proof} Set $P_0=Q_0$. We will construct $P_1,\ldots,P_n$ as follows.

Step 1. Firstly, we will construct $P_1$. For each irreducible component $\Gamma$ of dimension $n-1$ of $P_0^*\cap V(\bar k)$, we put 
$$V_{1\Gamma}=\{c=(c_0,\ldots,c_{t_1})\in k^{t_1+1}\ ;\ \Gamma\subset Q_c^*,\text{ where }Q_c=\sum_{j=0}^{t_1}c_jQ_j\}.$$
Here, $Q_c$ may be zero polynomial and its support $Q^*_c$ is $\P^n(\bar k)$. We see that $V_{1\Gamma}$ is a subspace of $k^{t_1+1}$. Since $\dim \left(\bigcap_{j=0}^{t_1}Q_j^*\right)\cap V(\bar k)\le n-2$, there exists $i \ (1\le i\le t_1+1)$ such that $\Gamma\not\subset Q_i^*$. Then $V_{1\Gamma}$ is a proper subspace of $k^{t_1+1}$. Since the set of irreducible components of dimension $n-1$ of $P_0^*\cap V(\bar k)$ is at most countable, 
$$ k^{t_1+1}\setminus\bigcup_{\Gamma}V_{1\Gamma}\ne\varnothing. $$
Hence, there exists $(c_{10},c_{11},\ldots,c_{1t_1})\in k^{t_1+1}$ such that
$$ \Gamma\not\subset P_1^*$$
for all irreducible components $\Gamma$ of dimension $n-1$ of $P_0^*\cap V(\bar k)$, where
$P_1=\sum_{j=0}^{t_1}c_{1j}Q_j.$
This implies that $\dim \left(P_0^*\cap P_1^*\right)\cap V(\bar k)\le n-2.$

Step 2. For each irreducible component $\Gamma'$ of dimension $n-2$ of $\left(P_0^*\cap P_1^*\right)\cap V(\bar k)$, put 
$$V_{2\Gamma'}=\{c=(c_0,\ldots,c_{t_2})\in k^{t_2+1}\ ;\ \Gamma\subset Q_c^*,\text{ where }Q_c=\sum_{j=0}^{t_2}c_jQ_j\}.$$
Hence, $V_{2\Gamma'}$ is a subspace of $k^{t_2+1}$. Since $\dim \left(\bigcap_{i=0}^{t_2}Q_i^*\right)\cap V\le n-3$, there exists $i, (0\le i\le t_2)$ such that $\Gamma'\not\subset Q_i^*$. Then, $V_{2\Gamma'}$ is a proper subspace of $k^{t_2+1}$. Since the set of irreducible components of dimension $n-2$ of $\left(P_0^*\cap P_1^*\right)\cap V(\bar k)$ is at most countable, 
$$ k^{t_2+1}\setminus\bigcup_{\Gamma'}V_{2\Gamma'}\ne\varnothing. $$
Therefore, there exists $(c_{20},c_{21},\ldots,c_{2t_2})\in k^{t_2+1}$ such that
$$ \Gamma'\not\subset P_3^*$$
for all irreducible components of dimension $n-2$ of $P_0^*\cap P_1^*\cap V(\bar k)$, where
$P_3=\sum_{j=0}^{t_2}c_{2j}Q_j.$
It implies that $\dim \left(P_1^*\cap P_2^*\cap P_3^*\right)\cap V(\bar k)\le n-3.$

Repeating again the above steps, after the $n^{\rm th}$-step we get hypersurfaces $P_0,\ldots,P_n$ satisfying
$$ \dim\left(\bigcap_{j=0}^tP_j^*\right)\cap V(\bar k)\le n-t-1\ (0\le t\le n). $$
In particular, $\left(\bigcap_{j=0}^{n}P_j^*\right)\cap V(\bar k)=\varnothing.$ We complete the proof of the lemma.
\end{proof}

\begin{definition}\label{3.3}
Let $V$ be a projective subvariety of $\P^N(k)$ of dimension $n$, and let $Q_1,\ldots,Q_q$ be $q$ hypersurfaces in $\P^N(k)$. We define the distributive constant of the family $\{Q_1,\ldots,Q_q\}$ in $V$ by
$$ \Delta:=\underset{\Gamma\subset\{1,\ldots,q\}}\max\dfrac{\sharp\Gamma}{n-\dim\left (\bigcap_{j\in\Gamma} Q_j^*\right )\cap V(\bar k)}.$$
\end{definition}
Here, we note that $\dim\varnothing =-\infty$.

Remark:

$\bullet$ If $Q_1,\ldots,Q_q\ (q\ge n+1)$ are in general position with respect to $V$ then 
$$ \Delta=\max\left\{\dfrac{1}{n-(n-1)},\dfrac{2}{n-(n-2)},\ldots,\dfrac{n}{n-(n-n)}\right\}=1.$$

$\bullet$ If $Q_1,\ldots,Q_q\ (q\ge m+1)$ are in $m-$subgeneral position with respect to $V$ then we may see that for every subset $\{Q_{i_1},\ldots,Q_{i_k}\}\ (1\le k\le m)$, one has
$$ \dim\left(\bigcap_{j=1}^kQ_{i_j}\right)\le \min\{n-1,m-k\},$$
and hence
\begin{align*}
\Delta&\le\max\left\{\dfrac{1}{n-(n-1)},\dfrac{2}{n-(n-1)},\ldots,\dfrac{m-n+1}{n-(n-1)},\dfrac{m-n+2}{n-(n-2)},\ldots,\dfrac{m}{n-(m-m)}\right\}\\
&=m-n+1.
\end{align*}
For more general, we have the following definition.

\begin{definition}\label{de3.4}
Let $k$ be a number field and let $V$ be a smooth projective subvariety of $\P^N(k)$ of dimension $n$. Let $Q_0,\ldots,Q_{l}$ be $l$ hypersurfaces in $\P^N(k)$. We say that the family $\{Q_0,\ldots,Q_{l}\}$ is in $(t_1,t_2,\ldots,t_n)$-subgeneral position with respect to $V$ if for every $1\le s\le n$ and $t_s+1$ hypersurfaces $Q_{j_0},\ldots,Q_{j_{t_s}}$, we have
$$ \dim\bigcap_{i=0}^{t_s}Q_{j_i}^*\cap V(\bar k) \le n-s-1.$$
\end{definition}

Remark
\begin{itemize}
\item[(a)] If $\{Q_0,\ldots,Q_{l}\}$ is in $(t_1,t_2,\ldots,t_n)$-subgeneral position with respect to $V$ then its distributive constant in $V$ satisfying
$$ \Delta\le\max_{1\le k\le n}\dfrac{t_k}{n-(n-k)}=\max_{1\le k\le n}\dfrac{t_k}{k}.$$   
\item[(b)] If $Q_1,\ldots,Q_q\ (q\ge m+1)$ are in $m-$subgeneral position with respect to $V$ with index $\kappa$,  one has
$$ \dim\left(\bigcap_{j=1}^kQ_{i_j}\right)\le n-\kappa-(k-(m-n+\kappa-1))=m-k-1.$$
Then $\{Q_0,\ldots,Q_{l}\}$ is in $(1,2,\ldots,\kappa-1,m-n+\kappa,m-n+\kappa+1,\ldots,m-1,m)$-subgeneral position with respect to $V$
and hence
\begin{align*}
\Delta&\le\max\biggl\{\dfrac{1}{n-(n-1)},\dfrac{2}{n-(n-2)},\ldots,\dfrac{\kappa-1}{n-(n-\kappa+1)},\\
&\quad\quad\quad\quad\quad\quad\quad\quad\quad\ldots,\dfrac{(m-n)+\kappa}{n-(n-\kappa)},\dfrac{(m-n)+\kappa+1}{n-(n-\kappa+1)},\ldots,\frac{m}{n}\biggl\}\\
&=\max\left\{1,\dfrac{m-n}{\kappa}+1\right\}=\dfrac{m-n+\kappa}{\kappa}.
\end{align*}
\end{itemize}

\section{General second main theorems and non-integrated defect  relation}

\begin{proof}[{\bf Proof of Theorem \ref{1.1}}]

It is suffice for us to consider the case where $\Delta<\dfrac{q}{n+1}$. Note that $\Delta\ge 1$, and hence $q>n+1$. If there exists $i\in\{1,\ldots,q\}$ such that $\bigcap_{\underset{j\ne i}{j=1}}^qQ_j^*\cap V\ne\varnothing$ then 
$$ \Delta\ge\dfrac{q-1}{n}>\dfrac{q}{n+1}.$$
This is a contradiction. Therefore, $\bigcap_{\underset{j\ne i}{j=1}}^qQ_j^*\cap V=\varnothing$ for all $i\in\{1,2,\ldots,q\}$.

Since the number of hypersurfaces occurring in this proof is finite, we may choose a positive constant $c$ such that for each given hypersurface $Q$ in this proof, we have
$$ Q({\bf x})\le c\|{\bf x}\|^{\deg Q} $$
for all ${\bf x}=(x_0,\ldots,x_n)\in\C^{n+1}$. 

Firstly, we will prove the theorem for the case where all hypersurfaces $Q_i\ (1\le i\le q)$ are of the same degree $d$. 
We denote by $\mathcal I$ the set of all permutations of the set $\{1,\ldots,q\}$. Denote by $n_0$ the cardinality of $\mathcal I$, $n_0=q!$, and we write
$\mathcal I=\{I_1,\ldots,I_{n_0}\}$,
where $I_i=(I_i(0),\ldots,I_i(q-1))\in\mathbb N^q$ and $I_1<I_2<\cdots <I_{n_0}$ in the lexicographic order.

For each $I_i\in\mathcal I$, since $\bigcap_{j=1}^{q-1}Q_{I_i(j)}^*\cap V=\varnothing$, there exist $n+1$ integers $t_{i,0},t_{i,1},\ldots,t_{i,n}$ with $0=t_{i,0}<\cdots<t_{i,n}=l_i$, where $l_i\le q-2$ such that $\bigcap_{j=0}^{l_i}Q_{I_i(j)}^*\cap V=\varnothing$ and
$$\dim\left (\bigcap_{j=0}^{s}Q_{I_i(j)}^*\right )\cap V=n-u\ \forall t_{i,u-1}\le s<t_{i,u},1\le u\le n.$$
Then, $\Delta >\dfrac{t_{i,u}-t_{i,0}}{u}$ for all $1\le u\le n.$ Denote by $P_{i,0},\ldots,P_{i,n}$ the hypersurfaces obtained in Lemma \ref{3.1} with respect to the hypersurfaces $Q_{I_i(0)},\ldots,Q_{I_i(l_i)}$. We may choose a positive constant $B\ge 1$, commonly for all $I_i\in\mathcal I$, such that
$$ |P_{i,j}({\bf x})|\le B\max_{0\le s\le t_{i,j}}|Q_{I_i(j)}({\bf x})|, $$
for all $0\le j\le n$ and for all ${\bf x}=(x_0,\ldots,x_n)\in\C^{n+1}$. 

Consider a reduced representation $\tilde f=(f_0,\ldots,f_n): \C^m\rightarrow \C^{N+1}$ of $f$. Fix an element $I_i\in\mathcal I$. Denote by $S(i)$ the set of all points $z\in \C\setminus\bigcup_{i=1}^qQ_i(\tilde f)^{-1}(\{0\})$ such that
$$ |Q_{I_i(0)}(\tilde f)(z)|\le |Q_{I_i(1)}(\tilde f)(z)|\le\cdots\le |Q_{I_i(q-1)}(\tilde f)(z)|.$$
Since $\bigcap_{j=0}^{l_i}Q_{I_i(j)}^*\cap V=\varnothing$, by Lemma \ref{3.2}, there exists a positive constant $A$, which is chosen common for all $I_i$, such that
$$ \|\tilde f (z)\|^d\le A\max_{0\le j\le l_i}|Q_{I_i(j)}(\tilde f)(z)|\ \forall z\in S(i). $$
Therefore, for $z\in S(i)$, By Lemma \ref{3.1} we have
\begin{align*}
\prod_{i=1}^q\dfrac{\|\tilde f (z)\|^d}{|Q_i(\tilde f)(z)|}&\le A^{q-l_i}\prod_{j=0}^{l_j-1}\dfrac{\|\tilde f (z)\|^d}{|Q_{I_i(j)}(\tilde f)(z)|}\\
&\le A^{q-l_j}\prod_{j=0}^{n-1}\left(\dfrac{\|\tilde f (z)\|^d}{|Q_{I_i(t_j)}(\tilde f)(z)|}\right)^{t_{i,j+1}-t_{i,j}}\\
&\le A^{q-l_j}\prod_{j=0}^{n-1}\left(\dfrac{\|\tilde f (z)\|^d}{|Q_{I_i(t_j)}(\tilde f)(z)|}\right)^{\Delta}\\
&\le A^{q-l_j}B^{n\Delta}\prod_{j=0}^{n-1}\left(\dfrac{\|\tilde f (z)\|^d}{|P_{i,j}(\tilde f)(z)|}\right)^{\Delta}\\
&\le A^{q-l_j}B^{n\Delta}\prod_{j=0}^{n}\left(\dfrac{\|\tilde f (z)\|^d}{|P_{i,j}(\tilde f)(z)|}\right)^{\Delta},
\end{align*}

Consider the mapping $\Phi$ from $V$ into $\P^{l-1}(\C)\ (l=n_0(n+1))$, which maps a point ${\bf x}=(x_0:\cdots:x_N)\in V$ into the point $\Phi({\bf x})\in\P^{l-1}(\C)$ given by
$$\Phi({\bf x})=(P_{1,0}(x):\cdots : P_{1,n}(x):P_{2,0}(x):\cdots:P_{2,n}(x):\cdots:P_{n_0,0}(x):\cdots :P_{n_0,n}(x)),$$
where $x=(x_0,\ldots,x_N)$. 
Let $Y=\Phi (V)$. Since $V\cap\bigcap_{j=0}^{n}P_{1,j}^*=\varnothing$, $\Phi$ is a finite morphism on $V$ and $Y$ is a complex projective subvariety of $\P^{l-1}(\C)$ with $\dim Y=n$ and 
$$\delta:=\deg Y\le d^n.\deg V.$$ 
For every 
$${\bf a} = (a_{1,0},\ldots,a_{1,n},a_{2,0}\ldots,a_{2,n},\ldots,a_{n_0,0},\ldots,a_{n_0,n})\in\mathbb Z^l_{\ge 0}$$ 
and
$${\bf y} = (y_{1,0},\ldots,y_{1,n},y_{2,0}\ldots,y_{2,n},\ldots,y_{n_0,0},\ldots,y_{n_0,n})$$ 
we denote ${\bf y}^{\bf a} = y_{1,0}^{a_{1,0}}\ldots y_{1,n}^{a_{1,n}}\ldots y_{n_0,0}^{a_{n_0,0}}\ldots y_{n_0,n}^{a_{n_0,n}}$. Let $u$ be a positive integer. We set
\begin{align*}
n_u:=H_Y(u)-1,\ \xi_u:=\binom{l+u-1}{u}-1,
\end{align*}
and define
$$ Y_u:=\C[y_1,\ldots,y_l]_u/(I_Y)_u, $$
which is a vector space of dimension $n_u+1$. Fix a basis $\{v_0,\ldots, v_{n_u}\}$ of $Y_u$ and consider the meromorphic mapping $F$ into $\P^{n_u}(\C)$ with the following  reduced representation
$$ \tilde F=(v_0(\Phi\circ \tilde f),\ldots,v_{n_u}(\Phi\circ \tilde f)). $$
Hence $F$ is linearly nondegenerate, since $f$ is algebraically nondegenerate.

Now, we fix an index $i\in\{1,\ldots,n_0\}$ and a point $z\in S(i)$. We define 
$${\bf c}_z = (c_{1,0,z},\ldots,c_{1,n,z},c_{2,0,z},\ldots,c_{2,n,z},\ldots,c_{n_0,0,z},\ldots,c_{n_0,n,z})\in\mathbb Z^{l},$$ 
where
\begin{align}\label{4.1}
c_{i,j,z}:=\log\frac{\|\tilde f(z)\|^d\|P_{i_0,j}\|}{|P_{i,j}(\tilde f)(z)|}\text{ for } i=1,\ldots,n_0 \text{ and }j=0,\ldots,n.
\end{align}
We see that $c_{i,j,z}\ge 0$ for all $i$ and $j$. By the definition of the Hilbert weight, there are ${\bf a}_{1,z},\ldots,{\bf a}_{H_Y(u),z}\in\mathbb N^{l}$ with
$$ {\bf a}_{i,z}=(a_{i,1,0,z},\ldots,a_{i,1,n,z},\ldots,a_{i,n_0,0,z},\ldots,a_{i,n_0,n,z}), $$
where $a_{i,j,s,z}\in\{1,\ldots,\xi_u\},$ such that the residue classes modulo $(I_Y)_u$ of ${\bf y}^{{\bf a}_{1,z}},\ldots,{\bf y}^{{\bf a}_{H_Y(u),z}}$ form a basic of $\C[y_1,\ldots,y_p]_u/(I_Y)_u$ and
\begin{align}\label{4.2}
S_Y(u,{\bf c}_z)=\sum_{i=1}^{H_Y(u)}{\bf a}_{i,z}\cdot{\bf c}_z.
\end{align}
We see that ${\bf y}^{{\bf a}_{i,z}}\in Y_u$ (modulo $(I_Y)_u$). Then we may write
$$ {\bf y}^{{\bf a}_{i,z}}=L_{i,z}(v_0,\ldots,v_{H_Y(u)}), $$ 
where $L_{i,z}\ (1\le i\le H_Y(u))$ are independent linear forms.
We have
\begin{align*}
\log\prod_{i=1}^{H_Y(u)} |L_{i,z}(\tilde F(z))|&=\log\prod_{i=1}^{H_Y(u)}\prod_{\overset{1\le t\le n_0}{0\le j\le n}}|P_{t,j}(\tilde f(z))|^{a_{i,t,j,z}}\\
&=-S_Y(u,{\bf c}_z)+duH_Y(u)\log \|\tilde f(z)\| +O(uH_Y(u)).
\end{align*}
It follows that
\begin{align*}
\log\prod_{i=1}^{H_Y(u)}\dfrac{\|\tilde F(z)\|\cdot \|L_{i,z}\|}{|L_{i,z}(\tilde F(z))|}=&S_Y(u,{\bf c}_z)-duH_Y(u)\log \|\tilde f(z)\| \\
&+H_Y(u)\log \|\tilde F(z)\|+O(uH_Y(u)).
\end{align*}
Note that $L_{i,z}$ depends on $i$ and $z$, but the number of these linear forms is finite. Denote by $\mathcal L$ the set of all $L_{i,z}$ occurring in the above inequalities. We have
\begin{align}\label{4.3}
\begin{split}
S_Y(u,{\bf c}_z)\le&\max_{\mathcal J\subset\mathcal L}\log\prod_{L\in \mathcal J}\dfrac{\|\tilde F(z)\|\cdot \|L\|}{|L(\tilde F(z))|}+duH_Y(u)\log \|\tilde f(z)\|\\
& -H_Y(u)\log \|\tilde F(z)\|+O(uH_Y(u)),
\end{split}
\end{align}
where the maximum is taken over all subsets $\mathcal J\subset\mathcal L$ with $\sharp\mathcal J=H_Y(u)$ and $\{L;L\in\mathcal J\}$ is linearly independent.
From Theorem \ref{2.12} we have
\begin{align}\label{4.4}
\dfrac{1}{uH_Y(u)}S_Y(u,{\bf c}_z)\ge&\frac{1}{(n+1)\delta}e_Y({\bf c}_z)-\frac{(2n+1)\delta}{u}\max_{\underset{0\le j\le n}{1\le i\le n_0}}c_{i,j,z}
\end{align}
We chose an index $i_0$ such that $z\in S(i_0)$. It is clear that
\begin{align*}
\max_{\underset{0\le j\le n}{1\le i\le n_0}}c_{i,j,z}\le \sum_{0\le j\le n}\log\frac{\|\tilde f(z)\|^d\|P_{i_0,j}\|}{|P_{i_0,j}(\tilde f)(z)|}+O(1),
\end{align*}
where the term $O(1)$ does not depend on $z$ and $i_0$.
Combining (\ref{4.3}), (\ref{4.4}) and the above remark, we get
\begin{align}\nonumber
\frac{1}{(n+1)\delta}e_Y({\bf c}_z)\le &\dfrac{1}{uH_Y(u)}\left (\max_{\mathcal J\subset\mathcal L}\log\prod_{L\in \mathcal J}\dfrac{\|\tilde F(z)\|\cdot \|L\|}{|L(\tilde F(z))|}-H_Y(u)\log \|\tilde F(z)\|\right )\\
\label{4.5}
\begin{split}
&+d\log \|\tilde f(z)\|+\frac{(2n+1)\delta}{u}\max_{\underset{0\le j\le n}{1\le i\le n_0}}c_{i,j,z}+O(1/u)\\
\le &\dfrac{1}{uH_Y(u)}\left (\max_{\mathcal J\subset\mathcal L}\prod_{L\in\mathcal J}\dfrac{\|\tilde F(z)\|\cdot \|L\|}{|L(\tilde F(z))|}-H_Y(u)\log \|\tilde F(z)\|\right )\\
&+d\log \|\tilde f(z)\|+\frac{(2n+1)\delta}{m}\sum_{0\le j\le n}\log\frac{\|\tilde f(z)\|^d\|P_{i_0,j}\|}{|P_{i_0,j}(\tilde f)(z)|}+O(1/u).
\end{split}
\end{align}
Since $\{P_{i_0,0}=\ldots=P_{i_0,n}=0\}\cap V=\varnothing$, by Lemma \ref{2.13}, we have
\begin{align}\label{4.6}
e_Y({\bf c}_z)\ge (c_{i_0,0,z}+\cdots +c_{i_0,n,z})\cdot\delta =\left (\sum_{0\le j\le n}\log\frac{\|\tilde f(z)\|^d\|P_{i_0,j}\|}{|P_{i_0,j}(\tilde f)(z)|}\right )\cdot\delta.
\end{align}
Then, from (\ref{4.1}), (\ref{4.5}) and (\ref{4.6}) we have
\begin{align}\label{4.7}
\begin{split}
\frac{1}{\Delta}&\log \prod_{i=1}^q\dfrac{\|\tilde f (z)\|^d}{|Q_i(\tilde f)(z)|}\le\dfrac{n+1}{uH_Y(u)}\left (\max_{\mathcal J\subset\mathcal L}\log\prod_{L\in\mathcal J}\dfrac{\|\tilde F(z)\|\cdot \|L\|}{|L(\tilde F(z))|}-H_Y(u)\log \|\tilde F(z)\|\right )\\
&+d(n+1)\log \|\tilde f(z)\|+\frac{(2n+1)(n+1)\delta}{u}\sum_{\underset{0\le j\le n}{1\le i\le n_0}}\log\frac{\|\tilde f(z)\|^d\|P_{i,j}\|}{|P_{i,j}(\tilde f)(z)|}+O(1),
\end{split}
\end{align}
where the term $O(1)$ does not depend on $z$. 

Integrating both sides of the above inequality, we obtain 
\begin{align}\nonumber
\frac{1}{d}\sum_{i=1}^qm_f(r,Q_i)\le& \dfrac{\Delta(n+1)}{duH_Y(u)}\left (\int\limits_{S(r)}\max_{\mathcal J\subset\mathcal L}\log\prod_{L\in\mathcal J}\dfrac{\|\tilde F(z)\|\cdot \|L\|}{|L(\tilde F(z))|}\sigma_m-H_Y(u)T_F(r)\right )\\
\label{4.8}
&+\Delta(n+1)T_f(r)+\frac{\Delta(2n+1)(n+1)\delta}{du}\sum_{{\underset{0\le j\le n}{1\le i\le n_0}}}m_f(r,P_{i,j}).
\end{align} 
On the other hand, by Theorem \ref{2.4}, for every $\epsilon'>0$ (which will be chosen later) we have
\begin{align}
\label{4.9}
\begin{split}
\biggl \|\ \int\limits_{S(r)}\max_{\mathcal J\subset\mathcal L}&\log\prod_{L\in\mathcal J}\dfrac{\|\tilde F(z)\|\cdot \|L\|}{|L(\tilde F(z))|}\sigma_m-H_Y(u)T_F(r)\\
&\le -N_{W(\tilde F)}(r)+\epsilon' T_F(r)\le  -N_{W(\tilde F)}(r)+\epsilon' du T_f(r).
\end{split}
\end{align}
Combining this inequality with (\ref{4.8}), we have
\begin{align}\label{4.10}
\begin{split}
\bigl (q-\Delta(n+1)\bigl )T_f(r)\le& \sum_{i=1}^q\frac{1}{d}N_{Q_i(\tilde f)}(r)-\dfrac{\Delta(n+1)}{duH_Y(u)}(N_{W(\tilde F)}(r)-\epsilon' du T_f(r))\\
&+\frac{\Delta(2n+1)(n+1)\delta}{ud}\sum_{{\underset{0\le j\le n}{1\le i\le n_0}}}m_f(r,P_{i,j})
\end{split}
\end{align}

We now estimate the quantity $N_{W(\tilde F)}(r)$. Consider a point $z\in\C^m$ outside the indeterminacy locus of $f$. Without loss of generality, we may assume that $z\in S(1)$, where $I_1=(1,\ldots,q)$ and moreover
$$ \nu_{(Q_1(\tilde f))}(z)\ge \nu_{(Q_2(\tilde f))}(z)\ge\cdots\ge \nu_{(Q_q(\tilde f))}(z).$$
Since $\bigcap_{j=1}^{l_1+1}Q_j^*\cap V=\varnothing$, then $\nu^0_{Q_i(\tilde f)}(z)=0$ for all $i\ge l_1+1$. Set 
$$c_{i,j}=\max\{0,\nu^0_{P_{i,j}}(z)-H_Y(u)\}$$
and 
$${\bf c}=(c_{1,0},\ldots,c_{1,n},\ldots,c_{n_0,0},\ldots,c_{n_0,n})\in\mathbb Z^l_{\ge 0}.$$
Then there are 
$${\bf a}_i=(a_{i,1,0},\ldots,a_{i,1,n},\ldots,a_{i,n_0,0},\ldots,a_{i,n_0,n}),a_{i,j,s}\in\{1,\ldots,\xi_u\}$$
such that ${\bf y}^{{\bf a}_1},\ldots,{\bf y}^{{\bf a}_{H_Y(u)}}$ is a basic of $Y_u$ and
$$ S_Y(u,{\bf c})=\sum_{i=1}^{H_Y(u)}{\bf a}_i{\bf c}.$$
Similarly as above, we write ${\bf y}^{{\bf a}_i}=L_i(v_1,\ldots,v_{H_Y(u)})$, where $L_1,\ldots,L_{H_Y(u)}$ are independent linear forms in variables $y_{i,j}\ (1\le i\le n_0,0\le j\le n)$. From the property of the general Wronskian, we have
$$W(\tilde F)=cW(L_1(\tilde F),\ldots,L_{H_Y(u)}(\tilde F)),$$
where $c$ is a nonzero constant. This yields that
$$ \nu^0_{W(\tilde F)}(z)=\nu^0_{W(L_1(\tilde f),\ldots,L_{H_Y(u)}(\tilde F))}\ge\sum_{i=1}^{H_Y(u)}\max\{0,\nu^0_{L_i(\tilde F)}(z)-n_u\}.$$
Then
$$ \nu^0_{L_i(\tilde F)}(z)=\sum_{\underset{0\le s\le n}{1\le j\le n_0}}a_{i,j,s}\nu^0_{P_{j,s}(\tilde f)}(z), $$
and hence
$$ \max\{0,\nu^0_{L_i(\tilde F)}(z)-n_u\}\ge\sum_{i=1}^{H_Y(u)}a_{i,j,s}c_{j,s}={{\bf a}_i}\cdot{\bf c}. $$
Thus, 
\begin{align}\label{4.11}
 \nu^0_{W(\tilde F)}(z)\ge\sum_{i=1}^{H_Y(u)}{{\bf a}_i}\cdot{\bf c}=S_Y(u,{\bf c}).
\end{align}
Since $\{P_{1,0}=P_{1,1}=\cdots=P_{1,n}=0\}\cap V=\varnothing$, then by Lemma \ref{2.13} we have
$$ e_Y({\bf c})\ge \delta\cdot\sum_{j=0}^{n}c_{1,j}=\delta\cdot\sum_{j=0}^{n}\max\{0,\nu^0_{P_{1,j}(\tilde f)}(z)-n_u\}. $$
On the other hand, by Theorem \ref{2.12} we have
\begin{align*}
 S_Y(u,{\bf c}) &\ge\frac{uH_Y(u)}{(n+1)\delta}e_Y({\bf c})-(2n+1)\delta H_Y(u)\max_{\underset{0\le j\le n}{1\le i\le n_0}}c_{i,j}\\
&\ge \frac{uH_Y(u)}{n+1}\sum_{j=0}^{n+1}\max\{0,\nu^0_{P_{1,j}(\tilde f)}(z)-n_u\}-(2n+1)\delta H_Y(u)\max_{\underset{0\le j\le n}{1\le i\le n_0}}\nu^0_{P_{i,j}(\tilde f)}(z).
\end{align*}
Combining this inequality and (\ref{4.11}), we have
\begin{align}\label{4.12}
\begin{split}
\dfrac{n+1}{duH_Y(u)}\nu^0_{W(\tilde F)}(z)\ge&\frac{1}{d}\sum_{j=0}^{n}\max\{0,\nu^0_{P_{1,j}(\tilde f)}(z)-n_u\}\\
&-\frac{(2n+1)(n+1)\delta}{du}\max_{\underset{0\le j\le n}{1\le i\le n_0}}\nu^0_{P_{i,j}(\tilde f)}(z).
\end{split}
\end{align}
Also it is easy to see that $\nu^0_{P_{1,j}(\tilde f)}(z)\ge \nu^0_{Q_{I_1(t_{1,j})}(\tilde f)}(z)$ for all $0\le j\le n$ (note that $I_1(t_{1,j})=t_{1,j}+1$, $P_{1,0}=Q_1$). Therefore, we have
\begin{align*}
\Delta&\sum_{j=0}^{n}\max\{0,\nu^0_{P_{1,j}(\tilde f)}(z)-n_u\}\\
&\ge \Delta\sum_{j=0}^{n}\max\{0,\nu^0_{Q_{I_1(t_{1,j})}(\tilde f)}(z)-n_u\}\\
&\ge\sum_{j=0}^{n}(t_{j+1}-t_j)\max\{0,\nu^0_{Q_{I_1(t_{1,j})}(\tilde f)}(z)-n_u\}\\
&\ge\sum_{j=0}^{l_1}\max\{0,\nu^0_{Q_{I_1(j)}(\tilde f)}(z)-n_u\}\\
&=\sum_{i=1}^{q}\max\{0,\nu^0_{Q_i(\tilde f)}(z)-n_u\}.
\end{align*}
Combining this inequality and (\ref{4.12}), we have
\begin{align}\label{4.13}
\begin{split}
\dfrac{\Delta(n+1)}{duH_Y(u)}\nu^0_{W(\tilde F)}(z)\ge&\frac{1}{d}\sum_{i=1}^{q}\max\{0,\nu^0_{Q_i(\tilde f)}(z)-n_u\}\\
&-\frac{\Delta(2n+1)(n+1)\delta}{du}\max_{\underset{0\le j\le n}{1\le i\le n_0}}\nu^0_{P_{i,j}(\tilde f)}(z)\\
\ge& \frac{1}{d}\sum_{i=1}^{q}(\nu^0_{Q_i(\tilde f)}(z)-\min\{\nu^0_{Q_i(\tilde f)}(z),u\})\\
&-\frac{\Delta(2n+1)(n+1)\delta}{du}\max_{\underset{0\le j\le n}{1\le i\le n_0}}\nu^0_{P_{i,j}(\tilde f)}(z).
\end{split}
\end{align}
Integrating both sides of this inequality, we get
\begin{align}\label{4.14}
\begin{split}
\dfrac{\Delta(n+1)}{duH_Y(u)}N_{W(\tilde F)}(r)&\ge\frac{1}{d}\sum_{i=1}^{q}(N_{Q_i(\tilde f)}(r)-N^{[n_u]}_{Q_i(\tilde f)}(r))\\
&-\frac{\Delta(2n+1)(n+1)\delta}{du}\max_{\underset{0\le j\le n}{1\le i\le n_0}}N_{P_{i,j}(\tilde f)}(r).
\end{split}
\end{align}

Combining inequalities (\ref{4.10}) and (\ref{4.14}), we get
\begin{align}\label{4.15}
\begin{split}
\bigl \|\ (q-&\Delta(n+1)\bigl )T_f(r)\le \sum_{i=1}^q\frac{1}{d}N^{[n_u]}_{Q_i(\tilde f)}(r)+\dfrac{\Delta(n+1)}{H_Y(u)}\epsilon' T_f(r)\\
&+\frac{\Delta(2n+1)(n+1)\delta}{ud}\sum_{{\underset{0\le j\le n}{1\le i\le n_0}}}(N_{P_{i,j}(\tilde f)}(r)+m_f(r,P_{i,j}))\\
=&\sum_{i=1}^q\frac{1}{d}N^{[n_u]}_{Q_i(\tilde f)}(r)+\left (\dfrac{\Delta(n+1)\epsilon'}{H_Y(u)}+\frac{\Delta(2n+1)(n+1)l\delta}{u}\right)T_f(r).
\end{split}
\end{align}

We now choose $u$ is the smallest integer such that
$$ u> \Delta(2n+1)(n+1)l\delta\epsilon^{-1} $$
and 
$$\epsilon'=\dfrac{H_Y(u)}{\Delta(n+1)}\left (\epsilon-\frac{\Delta(2n+1)(n+1)l\delta}{u}\right)>0.$$
Form (\ref{4.15}), we have
\begin{align*}
\bigl \|\ (q-\Delta(n+1)-\epsilon\bigl )T_f(r)\le \sum_{i=1}^q\frac{1}{d}N^{[n_u]}_{Q_i(\tilde f)}(r).
\end{align*}
Note that $\deg Y=\delta\le d^n\deg (V)$. Then the number $n_u$ is estimated as follows
\begin{align}\label{4.16}
\begin{split}
n_u&=H_Y(u)-1\le\delta \binom{n+u}{n}\le d^n\deg (V)e^{n}\left(1+\frac{u}{n}\right)^{n}\\
&<d^n\deg (V)e^n\left (\Delta(2n+4)l\delta\epsilon^{-1}\right)^n\\
&\le \left[\deg (V)^{n+1}e^nd^{n^2+n}\Delta^n(2n+4)^nl^n\epsilon^{-n}\right]=M_0.
\end{split}
\end{align} 
Then, the theorem is proved for the case where all hypersurfaces $Q_i$ have the same degree.

Now, for the general case where $Q_i (1\le i\le q)$ is of the degree $d_i$. Then all $Q_i^{d/d_i}$ are of the same degree $d\ (1\le i\le q)$. Applying the above result, we have
\begin{align*}
\bigl \|\ (q-\Delta(n+1)-\epsilon)T_f(r)&\le\sum_{i=1}^{q}\frac{1}{d}N^{[M_0]}_{Q^{d/d_i}(\tilde f)}(r)+o(T_f(r))\\
&\le\sum_{i=1}^{q}\frac{1}{d_i}N^{[M_0]}_{Q(\tilde f)}(r)+o(T_f(r)).
\end{align*}
The theorem is proved.
\end{proof}

\begin{proof}[{\bf Proof of Theorem \ref{1.2}}] 
We repeat almost completely the same arguments as in the proof of Theorem \ref{1.1} with only one modification that the inequality (\ref{4.9}) is replaced by the following:
\begin{align*}
\begin{split}
&\int\limits_{S(r)}\max_{\mathcal J\subset\mathcal L}\log\prod_{L\in\mathcal J}\dfrac{\|\tilde F(z)\|\cdot \|L\|}{|L(\tilde F(z))|}\sigma_m-H_Y(u)T_F(r,r_0)\\
&\le -N_{W(\tilde F)}(r,r_0)+\dfrac{(H_Y(u)-1)H_Y(u)}{2}(1+\epsilon')(c_F+\epsilon') T_F(r,r_0)+O(\log T_f(r,r_0))\\
&\le  -N_{W(\tilde F)}(r,r_0)+\dfrac{(H_Y(u)-1)H_Y(u)}{2}(1+\epsilon')(\dfrac{c_f}{du}+\epsilon') du T_f(r,r_0)+O(\log T_f(r,r_0))
\end{split}
\end{align*}
for all $r\in (0,R_0)$ outside a set $E$ with $\int_E\mathrm{exp}((c_f+\epsilon')T_f(r,r_0))dr<+\infty$.
Here we note that $T_F(r,r_0)=duT_f(r,r_0)$, and hence $c_F=\dfrac{c_f}{du}$.

Hence (\ref{4.15}) becomes to
\begin{align*}
&(q-\Delta(n+1))T_f(r,r_0)\le \sum_{i=1}^q\frac{1}{d}N^{[n_u]}_{Q_i(\tilde f)}(r,r_0)+\frac{\Delta(2n+1)(n+1)l\delta}{u}T_f(r,r_0)\\
&+\dfrac{(H_Y(u)-1)\Delta(n+1)}{2}(1+\epsilon')\left(\dfrac{c_f}{du}+\epsilon'\right)T_f(r,r_0)+O(\log T_f(r,r_0))
\end{align*}
for all $r\in (0,R_0)$ outside a set $E$ with $\int_E\mathrm{exp}((c_f+\epsilon')T_f(r,r_0))dr<+\infty$.

Choosing $u$ the smallest integer such that $ u> \Delta(2n+1)(n+1)l\delta\epsilon^{-1}$ and $\epsilon'$ a positive number with
$$\dfrac{(H_Y(u)-1)\Delta(n+1)}{2}(1+\epsilon')\left(\dfrac{c_f}{du}+\epsilon'\right)+\frac{\Delta(2n+1)(n+1)l\delta}{u}\le\dfrac{(H_Y(u)-1)\Delta(n+1)c_f}{2du}+\epsilon,$$
 we have
\begin{align*}
(q&-\Delta(n+1)-\epsilon)T_f(r,r_0)\\
&\le \sum_{i=1}^q\frac{1}{d}N^{[M_0]}_{Q_i(\tilde f)}(r)+\dfrac{(H_Y(u)-1)\Delta(n+1)c_f}{2du}T_f(r,r_0)+O(\log T_f(r,r_0))\\
&\le \sum_{i=1}^q\frac{1}{d}N^{[M_0]}_{Q_i(\tilde f)}(r)+\dfrac{M_0c_f}{2d(2n+1)(n+1)(q!)\delta}T_f(r,r_0)+O(\log T_f(r,r_0))
\end{align*}
for all $r\in (r_0,R_0)$ outside a set $E$ with $\int_E\mathrm{exp}((c_f+\epsilon')T_f(r,r_0))dr<+\infty$.
\end{proof}

\begin{proof}[{\bf Proof of Theorem \ref{1.3}}]
By using the universal covering if necessary, we may assume that $M=\mathbb{B}^m(R_0)\ (0<R_0<+\infty)$. Replacing $Q_i$ by $Q_i^{d/d_j} \ (j=1,\ldots, q)$ if necessary, we may assume that all hypersurfaces $Q_i\ (1\le i\le q)$ are of the same degree $d$. 

We use the same notation as in the proof of Theorem \ref{1.1} as follows: 
\begin{itemize}
\item $\mathcal I=\{I_1,\ldots,I_{n_0}\}$ is the set of all permutations of the set $\{1,\ldots.,q\}$, where $n_0=q!$, $I_i=(I_i(1),\ldots,I_i(q))\in\mathbb N^q$.
\item $t_{i,0},t_{i,1},\ldots,t_{i,n}$ are $n+1$ integers with $0=t_{i,0}<t_{i,1}<\cdots<t_{i,n}=l_i$, where $l_i$ is an integer, $l_i\le q-2$ such that $\bigcap_{j=0}^{l_i}Q_{I_i(j)}\cap V=\varnothing$ and
$$\dim\left (\bigcap_{j=0}^{s}Q_{I_i(j)}\right )\cap V=n-u\ \forall t_{i,u-1}\le s<t_{i,u},1\le u\le n.$$
\item $P_{i,0},\ldots,P_{i,n}$ are hypersurfaces obtained in Lemma \ref{3.1} with respect to the hypersurfaces $Q_{I_i(0)},\ldots,Q_{I_i(l_i)}$. 
\item $\Phi$ is the map from $V$ into $\P^{l-1}(\C)\ (l=n_0(n+1))$, which maps a point ${\bf x}=(x_0:\cdots:x_N)\in V$ into the point $\Phi({\bf x})\in\P^{l-1}(\C)$ given by
$$\Phi({\bf x})=(P_{1,0}(x):\cdots : P_{1,n}(x):\cdots:P_{n_0,0}(x):\cdots :P_{n_0,n}(x)),$$
where $x=(x_0,\ldots,x_N)\in\C^{N+1}.$ 
\item $Y=\Phi (V)$, $n_u:=H_Y(u)-1,\ \xi_u:=\binom{l+u-1}{u}-1$, $ Y_u=\C[y_1,\ldots,y_l]_u/(I_Y)_u$, $\{v_0,\ldots, v_{n_u}\}$ is a basis of $Y_u$ and $F$ is a linearly nondegenerate meromorphic mapping with the following reduced representation
$$ \tilde F=(v_0(\Phi\circ \tilde f),\ldots,v_{n_u}(\Phi\circ \tilde f)). $$
\end{itemize} 
Then there exists an admissible set $\alpha=(\alpha_0,\ldots,\alpha_{n_u})\in(\mathbb{Z}^m_+)^{n_u+1}$ such that
$$W^\alpha(F_0,\ldots,F_{n_u})=\det (D^{\alpha_i}(v_s(\Phi\circ \tilde f)))_{0\le i,s\le n_u}\neq 0,$$
where $F_j=v_j(\Phi\circ \tilde f)$.

Then, from (\ref{4.1}) we have
\begin{align}\label{4.17}
\begin{split}
\frac{1}{\Delta}&\log \prod_{i=1}^q\dfrac{\|\tilde f (z)\|^d}{|Q_i(\tilde f)(z)|}\le\dfrac{n+1}{uH_Y(u)}\left (\max_{\mathcal J\subset\mathcal L}\log\prod_{L\in\mathcal J}\dfrac{\|\tilde F(z)\|}{|L(\tilde F(z))|}-H_Y(u)\log \|\tilde F(z)\|\right )\\
&+d(n+1)\log \|\tilde f(z)\|+\frac{(2n+1)(n+1)\delta}{u}\sum_{\underset{1\le i\le n_0}{0\le j\le n+1}}\log\frac{\|\tilde f(z)\|^d}{|P_{i,j}(\tilde f)(z)|}+O(1),
\end{split}
\end{align}
where $O(1)$ does not depend on $z$, the maximum is taken over all subsets $\mathcal J\subset\mathcal L$ with $\sharp\mathcal J=H_Y(u)$ and $\{L;L\in\mathcal J\}$ is linearly independent, and $\mathcal L$ is defined as in the proof of Theorem \ref{1.1}. 

Set $m_0=(2n+1)(n+1)\delta$ and $b=\dfrac{n+1}{uH_Y(u)}$. There exists a positive constant $K_0$ such that
\begin{align*}
\dfrac{\|\tilde f (z)\|^{\frac{1}{\Delta}dq-d(n+1)-\frac{dm_0l}{u}}.|W^\alpha(\tilde{F}(z))|^b . \prod_{\underset{0\le j\le n}{1\le i\le n_0}}|P_{i,j}(\tilde f)(z)|^{\frac{m_0}{u}}}{\prod_{i=1}^q|Q_i(\tilde f)(z)|^{\frac{1}{\Delta}}}\le K_0^b.S_{\mathcal J}^b,
\end{align*}
where $ S_{\mathcal J}=\dfrac{|W^\alpha(\tilde{F}(z))|}{\prod_{L\in\mathcal J}|L(\tilde F(z))|}$ for some $\mathcal J \subset \mathcal{L}$ with $\# \mathcal{J} = H_Y(u)$ so that $\{L\in\mathcal J\}$ is linearly independent.

On the other hand, from (\ref{4.13}), we have
\begin{align}\label{4.18}
\dfrac{1}{\Delta}\sum_{i=1}^{q}\nu^0_{Q_i(\tilde f)}(z)-b\nu^0_{W(\tilde F)}(z)\le \dfrac{1}{\Delta}\sum_{i=1}^{q}\min\{\nu^0_{Q_i(\tilde f)}(z),n_u\}+\dfrac{m_0}{u}\max_{\underset{0\le j\le n}{1\le i\le n_0}}\nu^0_{P_{i,j}(\tilde f)}(z).
\end{align}

Assume that 
$$ \rho\Omega_f+\dfrac{\sqrt{-1}}{2\pi}\partial\bar\partial\log h^2\ge \ric\omega.$$
We now suppose that
$$ \sum_{j=1}^q\delta^{[H_Y(u)-1]}_f(Q_j)> \Delta(n+1)+\dfrac{\Delta m_0l}{u}+\dfrac{\rho\Delta H_Y(u)(H_Y(u)-1)b}{d}.$$
Then, for each $j\in\{1,\ldots ,q\},$ there exist constants $\eta_j>0$ and continuous plurisubharmonic function $\tilde u_j$ such that 
$e^{\tilde u_j}|\varphi_j|\le \|\tilde f\|^{d\eta_j},$ where $\varphi_j$ is a holomorphic function with $\nu_{\varphi_j}=\min\{n_u,f^*Q_j\}$ and
$$ q-\sum_{j=1}^q\eta_j>  \Delta(n+1)+\dfrac{\Delta m_0l}{u}+\dfrac{\rho\Delta H_Y(u)(H_Y(u)-1)b}{d}.$$
Put $u_j=\tilde u_j+\log |\varphi_j|$, then $u_j$ is a plurisubharmonic and
$$ e^{u_j}\le \|\tilde f\|^{d\eta_j},\ j=1,\ldots ,q. $$
Let
$$v (z)=\log\left |(z^{\alpha_0+\cdots+\alpha_{n_u}})^b\dfrac{(W^{\alpha}(\tilde F(z)))^b}{(\prod_{i=1}^{q}Q_i(\tilde f)(z))^{\frac{1}{\Delta}}}\right |+\dfrac{1}{\Delta}\sum_{j=1}^q u_j(z)+\dfrac{m_0}{u}\sum_{\underset{0\le j\le n}{1\le i\le n_0}}\log|P_{i,j}(\tilde f)(z)|.$$
Therefore, we have the following current inequality
\begin{align*}
2dd^c[v]&\ge b[\nu_{W^{\alpha}(\tilde F)}]-\dfrac{1}{\Delta}\sum_{j=1}^q[\nu_{Q_i(\tilde f)}]+\dfrac{1}{\Delta}\sum_{j=1}^q2dd^c[u_j]+\dfrac{m_0}{u}\sum_{\underset{1\le i\le n_0}{0\le j\le 0}}[\nu_{P_{i,j}(\tilde f)}(z)]\\
&\ge b[\nu_{W^{\alpha}(\tilde F)}]-\dfrac{1}{\Delta}\sum_{j=1}^q[\nu_{Q_i(\tilde f)}]+\dfrac{1}{\Delta}\sum_{j=1}^q[\min\{n_u,\nu_{Q_i(\tilde f)}\}]+\dfrac{m_0}{u}\max_{\underset{0\le j\le n}{1\le i\le n_0}}[\nu_{P_{i,j}(\tilde f)}(z)]\ge 0.
\end{align*}
This yields that $v$ is a plurisubharmonic function on $\B^m(R_0)$.

By the growth condition of $f$, there exists a continuous plurisubharmonic function $\omega\not\equiv\infty$ on $\B^m(R_0)$ such that
\begin{align*}
e^\omega {\rm d}V\le \|\tilde f\|^{2\rho}v_m.
\end{align*}
Set
$$t=\dfrac{2\rho}{\dfrac{1}{\Delta}d\left(q-\Delta(n+1)-\dfrac{\Delta m_0l}{u}-\sum_{j=1}^q\eta_j\right)}>0$$ 
and 
$$\lambda (z)=(z^{\alpha_0+\cdots +\alpha_{n_u}})^b\dfrac{\left (W^{\alpha}(\tilde F(z))\right )^b\cdot \prod_{\underset{0\le j\le n}{1\le i\le n_0}}|P_{i,j}(\tilde f)(z)|^{\frac{m_0}{u}}}{Q_1^{\frac{1}{\Delta}}(\tilde f)(z)\cdots Q_q^{\frac{1}{\Delta}}(\tilde f)(z)}.$$ 
Then, we see that
$$ \dfrac{H_Y(u)(H_Y(u)-1)b}{2}t< \dfrac{H_Y(u)(H_Y(u)-1)b}{2}\cdot\dfrac{2\rho}{\dfrac{1}{\Delta}d\dfrac{\rho \Delta H_Y(u)(H_Y(u)-1)b}{d}}=1,$$
and the function $\zeta=\omega+ tv$ is plurisubharmonic on $M$. Choose a positive number $\xi$ such that $0<\dfrac{H_Y(u)(H_Y(u)-1)b}{2}t<\xi<1.$
Then, we have
\begin{align}\label{4.19}
\begin{split}
e^\zeta dV&=e^{\omega +tv}dV\le e^{tv}\|\tilde f\|^{2\rho}v_m=|\lambda|^{t}(\prod_{j=1}^qe^{t\frac{1}{\Delta}u_j})\|\tilde f\|^{2\rho}v_m\\
&\le|\lambda|^t \|\tilde f\|^{2\rho+\sum_{j=1}^q\frac{1}{\Delta}dt\eta_j}v_m=|\lambda|^t \|\tilde f\|^{t\frac{1}{\Delta}d(q-\Delta(n+1)-\frac{\Delta m_0l}{u})}v_m.
\end{split}
\end{align}

We consider the following two cases.

(a) Case 1: $R_0< \infty$ and $ \lim\limits_{r\rightarrow R_0}\sup\dfrac{T_f(r,r_0)}{\log 1/(R_0-r)}<\infty.$
It suffices for us to prove the theorem in the case where $\mathbb{B}^m(R_0)=\mathbb{B}^m(1).$
Integrating both sides of (\ref{4.19}) over $\mathbb{B}^m(1),$  we have
\begin{align}\label{4.20}
\begin{split}
\int_{\B^m(1)}e^\zeta dV&\le \int_{\B^m(1)}|\lambda|^t \|\tilde f\|^{t(\frac{1}{\Delta}dq-d(n+1)-\frac{dm_0l}{u})}v_m.\\
&=2m\int_0^1r^{2m-1}\left (\int_{S(r)}\bigl (|\lambda| \|\tilde f\|^{\frac{1}{\Delta}dq-d(n+1)-\frac{dm_0l}{u}}\bigl )^t\sigma_m\right )dr\\
&\le 2m\int_0^1r^{2m-1}\left (\int_{S(r)}\sum_{\mathcal J}\bigl |(z^{\alpha_0+\cdots +\alpha_{n_u}})K_0S_{\mathcal J}\bigl |^{bt}\sigma_m\right )dr,
\end{split}
\end{align}
where the summation is taken over all $\mathcal J\subset\mathcal L$ with $\sharp J=H_Y(u)$ and $\{L\in\mathcal J\}$ is linearly independent.

Note that $(\sum_{i=0}^{n_u}|\alpha_i|)bt\le \dfrac{H_Y(u)(H_Y(u)-1)b}{2}t<\xi<1$. Then by Proposition \ref{pro2.2} there exists a positive constant $K_1$ such that, for every $0<r_0<r<r'<1,$ 
\begin{align*}
\int_{S(r)}\left |(z^{\alpha_0+\cdots +\alpha_{n_u}})K_0S_{\mathcal J}(z)\right |^{bt}\sigma_m\le K_1\left (\dfrac{r'^{2m-1}}{r'-r}dT_f(r',r_0)\right )^{\xi}.
\end{align*}
Choosing $r'=r+\dfrac{1-r}{eT_f(r,r_0)}$, we get
$$ T_f(r',r_0)\le 2T_f(r,r_0)$$
outside a subset $E\subset [0,1]$ with $\int_E\dfrac{dr}{1-r}<+\infty$. Hence, the above inequality implies that
\begin{align*}
\int_{S(r)}\left |(z^{\alpha_1+\cdots +\alpha_{n_u}})K_0S_J(z)\right |^{bt}\sigma_m\le \dfrac{K}{(1-r)^\xi}\left (\log\dfrac{1}{1-r}\right )^{\xi}
\end{align*}
for all $r$ outside $E$, where $K$ is a positive constant. By choosing $K$ large enough, we may assume that the above inequality holds for all $r\in (0;1)$.
Then, the inequality (\ref{4.20}) yields that
\begin{align*}
\int_{\B^m(1)}e^\zeta dV&\le 2m\int_0^1r^{2m-1}\dfrac{K}{(1-r)^\delta}\left (\log\dfrac{1}{1-r}\right )^{\delta}dr< +\infty
\end{align*}
This contradicts the results of S. T. Yau  and L. Karp (see \cite{K82,Y76}). 

Therefore, we must have
\begin{align*}
\sum_{j=1}^q\delta^{[H_Y(u)-1]}_{f}(Q_j)&\le \Delta(n+1)+\dfrac{\Delta m_0 l}{u}+\dfrac{\rho \Delta H_Y(u)(H_Y(u)-1)b}{d}\\
&= \Delta(n+1)+\dfrac{\Delta m_0 l}{u}+\dfrac{\rho \Delta (n+1)(H_Y(u)-1)}{ud}.
\end{align*}
Choosing $u$ be the smallest integer such that $\epsilon -\dfrac{\Delta m_0l}{u}\ge 0$, i.e., $u\ge\Delta (2n+1)(n+1)l\delta\epsilon^{-1}$, we have
$$\sum_{j=1}^q\delta^{[H_Y(u)-1]}_{f}(Q_j)\le \Delta (n+1)+\epsilon+\dfrac{\rho\epsilon (H_Y(u)-1)}{\delta(2n+1)(n+1)(q!)d}.  $$
From (\ref{4.16}), we have $H_Y(u)-1\le M_0$ and also
$$\sum_{j=1}^q\delta^{[M_0]}_{f}(Q_j)\le \Delta (n+1)+\epsilon+\dfrac{\rho\epsilon M_0}{\delta(2n+1)(n+1)(q!)d}.$$
Then theorem is proved in this case.

(b) Case 2: $\lim\limits_{r\rightarrow R_0}\sup\dfrac{T(r,r_0)}{\log 1/(R_0-r)}= \infty .$
Repeating the argument in the proof of Theorem \ref{1.1}, we only need to prove the following theorem.
\begin{theorem}
With the assumption of Theorem \ref{1.3}. Then, we have
$$(q-\Delta(n+1)-\epsilon)T_f(r,r_0)\le \sum_{i=1}^{q}\dfrac{1}{d}N^{[M_0]}_{Q_i(\tilde f)}(r)+S(r),$$
where $S(r)$ is evaluated as follows:

(i) In the case $R_0<\infty,$ $$S(r)\le K(\log^+\dfrac{1}{R_0-r}+\log^+T_f(r,r_0))$$
for all $0<r_0<r<R_0$ outside a set $E\subset [0,R_0]$ with $\int_E\dfrac{dt}{R_0-t}<\infty$ and $K$ is a positive constant.

(ii) In the case  $R_0=\infty,$ $$S(r)\le K(\log r+\log^+T_f(r,r_0))$$
for all $0<r_0<r<\infty$ outside a set $E'\subset [0,\infty]$ with $\int_{E'} dt<\infty$ and $K$ is a positive constant.
\end{theorem}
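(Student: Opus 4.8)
The plan is to run the argument in parallel with the proof of Theorem \ref{1.1}, the only structural change being that, since $M$ is (a quotient of) a ball $\B^m(R_0)$ rather than $\C^m$, Ru's second main theorem (Theorem \ref{2.4}) is not available and must be replaced by feeding the logarithmic derivative estimate on the ball (Proposition \ref{pro2.2}) into the argument by hand and carrying the resulting error term. All the algebraic and combinatorial input has already been assembled in the body of the proof of Theorem \ref{1.3}: I keep the notation $\Phi,\ Y,\ \delta,\ F,\ \tilde F=(F_0,\dots,F_{n_u}),\ \alpha=(\alpha_0,\dots,\alpha_{n_u}),\ b=\frac{n+1}{uH_Y(u)},\ m_0=(2n+1)(n+1)\delta,\ l=n_0(n+1)$, start from the pointwise inequality (\ref{4.17}) and the current inequality (\ref{4.18}) already established there, and, as in the proof of Theorem \ref{1.1}, may assume all $Q_i$ have degree $d$ and $\bigcap_{j\neq i}Q_j^*\cap V=\varnothing$ for every $i$.

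First I would insert the general Wronskian into (\ref{4.17}): writing $\prod_{L\in\mathcal J}\|\tilde F\|/|L(\tilde F)|=\bigl(\|\tilde F\|^{H_Y(u)}/|W^\alpha(\tilde F)|\bigr)S_{\mathcal J}$ with $S_{\mathcal J}=|W^\alpha(\tilde F)|/\prod_{L\in\mathcal J}|L(\tilde F)|$, multiplying through by $\Delta/d$, integrating over $S(r)$ against $\sigma_m$ and subtracting the value on $S(r_0)$. Using Jensen's formula, the first main theorem (so that $m_f(r,r_0,P_{i,j})\le dT_f(r,r_0)+O(1)$) and $T_F(r,r_0)=du\,T_f(r,r_0)+O(1)$, this gives
\begin{align*}
\frac1d\sum_{i=1}^q m_f(r,r_0,Q_i)\le\ &\Delta(n+1)T_f(r,r_0)+\frac{\Delta b}{d}\Bigl(\int_{S(r)}\max_{\mathcal J}\log^+S_{\mathcal J}\,\sigma_m-N_{W^\alpha(\tilde F)}(r,r_0)\Bigr)\\
&+\frac{\Delta m_0}{du}\sum_{i,j}m_f(r,r_0,P_{i,j})+O(1).
\end{align*}
Integrating (\ref{4.18}) over $\B^m(r)$, bounding $\max_{i,j}N_{f^*P_{i,j}}\le\sum_{i,j}N_{f^*P_{i,j}}$, gives $\frac{\Delta b}{d}N_{W^\alpha(\tilde F)}(r,r_0)\ge\frac1d\sum_i\bigl(N_{Q_i(\tilde f)}(r,r_0)-N^{[n_u]}_{Q_i(\tilde f)}(r,r_0)\bigr)-\frac{\Delta m_0}{du}\sum_{i,j}N_{f^*P_{i,j}}(r,r_0)+O(1)$. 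Adding the two and invoking the first main theorem once more, in the forms $\frac1d\sum_i\bigl(m_f(r,r_0,Q_i)+N_{Q_i(\tilde f)}(r,r_0)\bigr)=qT_f(r,r_0)+O(1)$ and $\sum_{i,j}\bigl(m_f(r,r_0,P_{i,j})+N_{f^*P_{i,j}}(r,r_0)\bigr)=ldT_f(r,r_0)+O(1)$, then taking $u$ to be the smallest integer with $u>\Delta(2n+1)(n+1)l\delta\epsilon^{-1}$ (so that $\Delta m_0 l/u<\epsilon$ and $n_u\le M_0$ by the computation in (\ref{4.16})), I arrive at
$$\bigl(q-\Delta(n+1)-\epsilon\bigr)T_f(r,r_0)\le\sum_{i=1}^q\frac1d N^{[M_0]}_{Q_i(\tilde f)}(r,r_0)+\frac{\Delta b}{d}\int_{S(r)}\max_{\mathcal J}\log^+S_{\mathcal J}\,\sigma_m+O(1),$$
so only the last integral, with its now fixed positive coefficient, remains to be identified as $S(r)$.

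For that I would apply Proposition \ref{pro2.2} to each of the finitely many $S_{\mathcal J}$ with exponents $t,p$ satisfying $0<t\sum_i|\alpha_i|<p<1$ (for instance $p=\tfrac12$): this bounds $\int_{S(r)}\bigl|z^{\alpha_0+\cdots+\alpha_{n_u}}S_{\mathcal J}\bigr|^t\sigma_m$ by $K\bigl(\frac{R^{2m-1}}{R-r}T_F(R,r_0)\bigr)^p$ for $r_0<r<R<R_0$. By concavity of the logarithm, the elementary bound $\int_{S(r)}\bigl|\log|z^{\alpha_0+\cdots+\alpha_{n_u}}|\bigr|\sigma_m=O(1+\log^+r)$, and $T_F(R,r_0)=du\,T_f(R,r_0)+O(1)$, this yields $\int_{S(r)}\max_{\mathcal J}\log^+S_{\mathcal J}\,\sigma_m\le C\bigl(\log^+\frac1{R-r}+\log^+R+\log^+T_f(R,r_0)\bigr)+O(1+\log^+ r)$ for a constant $C$ independent of $r,R$. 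Then the standard calculus lemma (as in \cite{RS}) closes the estimate: when $R_0<\infty$ take $R=r+\frac{R_0-r}{eT_f(r,r_0)}$, so that outside a set $E$ with $\int_E\frac{dt}{R_0-t}<\infty$ one has $T_f(R,r_0)\le 2T_f(r,r_0)$ and $\log\frac1{R-r}=\log^+\frac1{R_0-r}+\log^+T_f(r,r_0)+O(1)$, whence $S(r)\le K\bigl(\log^+\frac1{R_0-r}+\log^+T_f(r,r_0)\bigr)$; when $R_0=\infty$ take $R=r+\frac1{eT_f(r,r_0)}$, so that outside a set $E'$ of finite Lebesgue measure $T_f(R,r_0)\le 2T_f(r,r_0)$ and $\log^+R\le\log r+O(1)$ for $r$ large, whence $S(r)\le K\bigl(\log r+\log^+T_f(r,r_0)\bigr)$.

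Most of the above is bookkeeping already carried out in the proof of Theorem \ref{1.1}; the one genuinely delicate point is the final error-term analysis --- choosing the substitution $R=R(r)$ and checking that the exceptional set it produces has the asserted finiteness property ($\int_E dt/(R_0-t)<\infty$, resp.\ $\int_{E'}dt<\infty$), while simultaneously making sure that dividing by the small exponent $t$ of Proposition \ref{pro2.2} does not spoil the $O(\log)$ nature of $S(r)$ and that every constant hidden in $K,C$ and the various $O(1)$ terms stays independent of $r$.
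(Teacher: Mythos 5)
Your proposal is correct and takes essentially the same route as the paper's proof: both start from the pointwise inequality \eqref{4.17} and the Wronskian divisor estimate \eqref{4.13}, replace Ru's second main theorem by the Ru--Sogome integral estimate (Proposition \ref{pro2.2}) combined with concavity of the logarithm and Jensen's formula to reach the analogue of \eqref{4.23}, and then conclude with the choice $R=r+\frac{R_0-r}{eT_f(r,r_0)}$ (resp.\ $R=r+\frac{1}{T_f(r,r_0)}$ when $R_0=\infty$) together with the standard growth lemma controlling the exceptional set. The only cosmetic difference is that the paper integrates the $t$-th power of the single combined function $\lambda$ and applies concavity and Jensen in one step, whereas you first isolate the $\log^{+}S_{\mathcal J}$ error term and then add the proximity and counting-function estimates; the bookkeeping is equivalent.
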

	
\noindent\textit{Proof.} Repeating the above argument, we have
\begin{align*}
\int\limits_{S(r)}&\left|(z^{\alpha_0+\cdots +\alpha_{n_u}})^b\dfrac{\|\tilde f(z)\|^{\frac{1}{\Delta}qd-d(n+1)-\frac{dm_0l}{u}}|W^{\alpha}(\tilde F)(z)|^b\cdot \prod_{\underset{0\le j\le n}{1\le i\le n_0}}|P_{i,j}(\tilde f)(z)|^{\frac{m_0}{u}}}{\prod_{i=1}^{q}|Q_i(\tilde f)(z)|^{\frac{1}{\Delta}}}\right|^{t}\sigma_m\\
&\le K_1\left(\dfrac{R^{2m-1}}{R-r}dT_f(R,r_0)\right)^{\delta}.
\end{align*}
for every $0<r_0<r<R<R_0$. By the concavity of the logarithmic function, we have
\begin{align*}
\begin{split}
b&\int_{S(r)}\log |(z^{\alpha_0+\cdots +\alpha_{n_u}})|\sigma_m+\left(\dfrac{1}{\Delta}qd-d(n+1)-\dfrac{dm_0l}{u}\right)\int_{S(r)}\log \|\tilde f\|\sigma_m\\
&+b\int_{S(r)}\log |W^{\alpha}(\tilde F)|\sigma_m+\dfrac{m_0}{u}\sum_{\underset{1\le i\le n_0}{0\le j\le n}} \int_{S(r)}\log |P_{i,j}(\tilde f)|\sigma_m\\
&-\dfrac{1}{\Delta}\sum_{j=1}^q \int_{S(r)}\log |Q_j(\tilde f)|\sigma_m\le K\left(\log^+\dfrac{R}{R-r}+\log^+T_f(R,r_0)\right)
\end{split}
\end{align*}
for some positive constant $K.$ By Jensen's formula, this inequality implies that
\begin{align}\label{4.22}
\begin{split}
\biggl(\dfrac{1}{\Delta}qd&-d(n+1)-\dfrac{dm_0l}{u}\biggl)T_f(r,r_0)+bN_{W^{\alpha}(\tilde F)}(r)+\dfrac{m_0}{u}\sum_{\underset{1\le i\le n_0}{0\le j\le n+1}}N_{P_{i,j}(\tilde f)}(r)\\
&-\dfrac{1}{\Delta}\sum_{i=1}^qN_{Q_i(\tilde f)}(r)\le K\left(\log^+\dfrac{R}{R-r}+\log^+T_f(R,r_0)\right)+O(1).
\end{split}
\end{align}
From (\ref{4.10}), we have 
$$ \dfrac{1}{\Delta}\sum_{i=1}^qN_{Q_i(\tilde f)}(r)-bN_{W^{\alpha}(\tilde F)}(r)-\dfrac{m_0}{u}\sum_{\underset{1\le i\le n_0}{0\le j\le n+1}}N_{P_{i,j}(\tilde f)}(r)\le\dfrac{1}{\Delta}\sum_{i=1}^qN^{[M_0]}_{Q_i(\tilde f)}(r).$$
Combining this estimate and (\ref{4.22}), we get
\begin{align}\label{4.23}
\begin{split}
\left(q-\Delta(n+1)-\epsilon\right)T_f(r,r_0)\le&\sum_{i=1}^{q}\dfrac{1}{d}N^{[M_0]}_{Q_i(\tilde f)}(r)\\
&+K\left(\log^+\dfrac{R}{R-r}+\log^+T_f(R,r_0)\right)+O(1).
\end{split}
\end{align}
Choosing $R=r+\dfrac{R_0-r}{eT_f(r,r_0)}$ if $R_0<\infty$ and $R=r+\dfrac{1}{T_f(r,r_0)}$ if $R_0=\infty$, we see that
$$ T_f\left(r+\dfrac{R_0-r}{eT_f(r,r_0)},r_0\right)\le 2T_f(r,r_0)$$
outside a subset $E\subset [0,R_0)$ with $\int_E\dfrac{dr}{R_0-r}<+\infty$ in the case $R_0<\infty$ and  
$$ T_f\left(r+\dfrac{1}{T_f(r,r_0)},r_0\right)\le 2T_f(r,r_0)$$
outside a subset $E'\subset [0,\infty)$ with $\int_{E'}dr<\infty$ in the case $R_0=\infty$.
Thus, from (\ref{4.23}) we have
$$ (q-\Delta(n+1)-\epsilon) T_f(r,r_0)\le\sum_{i=1}^{q}\dfrac{1}{d}N^{[M_0]}_{Q_i(\tilde f)}(r)+S(r). $$
This implies that
$$\sum_{j=1}^q\delta^{[M_0]}_{f}(Q_j)\le\sum_{j=1}^q\delta^{[M_0]}_{f,*}(Q_j)\le \Delta(n+1)+\epsilon.$$
The theorem is proved in this case.
\end{proof}

\section{Generalization of subspace theorem}

Let $k$ be a number field. Denote by $M_k$ the set of places of $k$ and by $M^\infty_k$ the set of archimedean places of $k$. For each $v\in M_k$, we choose the normalized absolute value $|\cdot |_v$ such that $|\cdot |_v=|\cdot|$ on $\mathbb Q$ (the standard absolute value) if $v$ is archimedean, and $|p|_v=p^{-1}$ if $v$ is non-archimedean and lies above the rational prime $p$. For each $v\in M_k$, denote by $k_v$ the completion of $k$ with respect to $v$ and set $$n_v := [k_v :\mathbb Q_v]/[k :\mathbb Q].$$
We put $\|x\|_v=|x|^{n_v}_v$. Then, the product formula is stated as follows:
$$\prod_{v\in M_k}\|x\|_v=1,\text{ for }  x\in k^*.$$
Let $S$ be a finite subset of $M_k$ containing $M^{\infty}_k$. An element $x\in k$ is said to be an $S$-integer if $\|x\|_v\le 1$ for every $v\in M_k\setminus S$. Denote by $\mathcal O_S$ the set of all $S$-integers. 

For ${\bf x} = (x_0 ,\ldots , x_N)\in k^{N+1}$, define
$$\|{\bf x}\|_v :=\max\{\|x_0\|_v,\ldots,\|x_N\|_v\},\ v\in M_k.$$
The absolute logarithmic height of a point ${\bf x}=(x_0:\cdots :x_N)\in\P^N(k)$ is defined by
$$h({\bf x}):=\sum_{v\in M_k}\log \|{\bf x}\|_v.$$
By the product formula, this definition does not depend on the choice of homogeneous coordinates $(x_0:\cdots :x_N)$ of ${\bf x}$. If $x\in k^*$, we define the absolute logarithmic height of $x$ by
$$h(x):=\sum_{v\in M_k}\log^+ \|x\|_v,$$
where $\log^+a=\log\max\{1,a\}.$

Let $Q=\sum_{I\in\mathcal T_d}a_I{\bf x}^I$ be a homogeneous polynomial of degree $d$ in $k[x_0,\ldots, x_N]$, where ${\bf x}^I = x^{i_0}\ldots x^{i_N}_N$ for ${\bf x}=(x_0,\ldots, x_N)$  and $I = (i_0,\ldots,i_N)$. Define $\|Q\|_v =\max\{\|a_I\|_v; I\in\mathcal T_d\}$. For each ${\bf x}=(x_0,\ldots, x_N)$, we have
\begin{align*}
\|Q({\bf x})\|_v\le C\cdot\|Q\|_v\cdot \|{\bf x}\|_v
\end{align*}
for all $v$, where $C$ is a positive constant (depending only on the degree of $Q$). The height of $Q$ is defined by
$$h(Q)=\sum_{v\in M_k}\log \|Q\|_v.$$
For each $v\in M_k$, we define the Weil function $\lambda_{Q,v}$ by
$$\lambda_{Q,v}({\bf x}):=\log\frac{\|{\bf x}\|_v^d\cdot \|Q\|_v}{\|Q({\bf x})\|_v},\ {\bf x}\in\P^N(k)\setminus\{Q=0\}.$$ 

\begin{proof}[{\bf Proof of Theorem \ref{1.5}}]
Similar as above, we may assume that all $Q_i\ (1\le i\le q)$ are of the same degree $d$ and $q>\Delta(n+1)$. 
We use the same following notation as in the proofs of Theorem \ref{1.1} and Theorem \ref{1.3}: 
\begin{itemize}
\item $\mathcal I=\{I_1,\ldots,I_{n_0}\}$ is the set of all permutations of the set $\{1,\ldots.,q\}$, where $n_0=q!$, $I_i=(I_i(1),\ldots,I_i(q))\in\mathbb N^q$.
\item $t_{i,0},t_{i,1},\ldots,t_{i,n}$ are $n+1$ integers with $0=t_{i,0}<t_{i,1}<\cdots<t_{i,n}=l_i$, where $l_i\le q-2$ such that $\bigcap_{j=0}^{l_i}Q_{I_i(j)}^*\cap V(\bar k)=\varnothing$ and
$$\dim\left (\bigcap_{j=0}^{s}Q_{I_i(j)}^*\right )\cap V(\bar k)=n-u\ \forall t_{i,u-1}\le s<t_{i,u},1\le u\le n.$$
\item $P_{i,0},\ldots,P_{i,n}$ are hypersurfaces obtained in Lemma \ref{3.1} with respect to hypersurfaces $Q_{I_i(0)},\ldots,Q_{I_i(l_i)}$. 
\item $\Phi$ is the morphism from $V(\bar k)$ into $\P^{l-1}(\bar k)\ (l=n_0(n+1))$, which maps a point $x\in V(\bar k)$ into the point $\Phi(x)\in\P^{l-1}(\bar k)$ given by
$$\Phi(x)=(P_{1,0}(x):\cdots : P_{1,n}(x):\cdots:P_{n_0,0}(x):\cdots :P_{n_0,n}(x)).$$ 
\item $Y=\Phi (V)$ is a subvariety of $\P^{l-1}(k)$ with $\dim Y=n$ and $\Delta:=\deg Y\le d^n.\deg V$ (since $V(\bar k)\cap\bigcap_{j=0}^{n}P_{1,j}^*=\varnothing$, $\Phi$ is a generically finite morphism on $V$).
\item $n_u:=H_Y(u)-1,\ \xi_u:=\binom{l+u-1}{u}-1$, $ Y_u=k[y_1,\ldots,y_l]_u/(I_Y)_u$, $\{v_0,\ldots, v_{n_u}\}$ is a basis of $Y_u$ so that each $v_i$ is a residue class of a monic monomial in $k[y_1,\ldots,y_l]_u$.
\end{itemize} 
For every 
$${\bf a} = (a_{1,0},\ldots ,a_{1,n},\ldots,a_{n_0,0},\ldots,a_{n_0,n})\in\mathbb Z^l_{\ge 0}$$ 
and
$${\bf y} = (y_{1,0},\ldots ,y_{1,n},\ldots,y_{n_0,0},\ldots,y_{n_0,n})$$ 
we denote ${\bf y}^{\bf a} = y_{1,0}^{a_{1,0}}\ldots y_{n_0,0}^{a_{n_0,0}}\ldots y_{n_0,n}^{a_{n_0,n}}$. 
Consider the morphism $F$ from $V$ into $\P^{H_Y(u)-1}(\bar k)$ with the following presentation
$$ F({\bf x})=(v_1(\Phi({\bf x})):\cdots :v_{H_Y(u)}(\Phi({\bf x}))),\ \ {\bf x}\in V(\bar k).$$

Since $S$ is finite set and the number of hypersurfaces occurring in this proof is finite, there is a positive constant $c$ such that for all $v\in S$ and a given hypersurfaces $Q\in\bar k[x_0,\ldots,x_N]$,
$$c\|{\bf x}\|_v\ge |Q({\bf x})| \ \forall {\bf x}=(x_0,\ldots,x_N)\in k^{N+1}.$$
Also, since $\bigcap_{j=0}^{l_i}Q_{I_i(j)}^*\cap V(\bar k)=\varnothing$, from the compactness of $V$ and the finiteness of $S$ there exist a positive constant $c_1$ such that
$$ \|P_{i,j}({\bf x})\|_v\le c_1\max_{0\le s\le t_{i,j}}\|Q_{I_i(s)}({\bf x})\|_v, $$
and $c_2$ such that
$$ \|{\bf x}\|_v^d\le c_2\max_{0\le j\le l_i}\|Q_{I_i(j)}({\bf x})\|_v$$
for all $v\in S$ and ${\bf x}\in V(k)$. 

For a given $v\in S$ and for a fixed point ${\bf x}\in V(k)$, there exists an index $i$ such that
$$\|Q_{I_i(1)}({\bf x})\|_v\le \|Q_{I_i(2)}({\bf x})\|_v\le\cdots\le \|Q_{I_1(q)}({\bf x})\|_v.$$

Therefore, we have
\begin{align*}
\prod_{i=1}^q\dfrac{\|{\bf x}\|_v^d}{\|Q_i({\bf x})\|_v}&\le c_2^{q-l_i}\prod_{j=0}^{l_i-1}\dfrac{\|{\bf x}\|_v^d}{\|Q_{I_i(j)}({\bf x})\|_v}\\
&\le c_2^{q-l_i}c^{-l_i}\prod_{j=0}^{n-1}\left(\dfrac{c\|{\bf x}\|_v^{d}}{\|Q_{I_i(i)}({\bf x})\|_v}\right )^{t_{i,j+1}-t_{i,j}}\\
&\le c_2^{q-l_i}c^{-l_i}\prod_{j=0}^{n-1}\left(\dfrac{c\|{\bf x}\|_v^{d}}{\|Q_{I_i(i)}({\bf x})\|_v}\right )^{\Delta}\\
&\le c_2^{q-l_i}c^{n\Delta -l_i}c_1^{n\Delta}\prod_{j=0}^{n-1}\left(\dfrac{\|{\bf x}\|_v^{d}}{\|P_{i,j}({\bf x})\|_v}\right )^{\Delta}\\
&\le C\prod_{j=0}^{n}\left(\dfrac{\|{\bf x}\|_v^{d}}{\|P_{i,j}({\bf x})\|_v}\right )^{\Delta},
\end{align*}
where $C$ is a positive constant, chosen independently from $v$ and ${\bf x}$.

The above inequality implies that
\begin{align}\label{3.2}
\log \prod_{i=1}^q\dfrac{\|{\bf x}\|_v^d}{\|Q_i({\bf x})\|_v}\le \Delta\log\sum_{i=1}^{n_0}\dfrac{\|{\bf x}\|_v^{(n+1)d}}{\prod_{j=0}^{n}\|P_{i,j}({\bf x})\|_v}+O(1),\ \forall {\bf x}\in V(k),
\end{align}
where $O(1)$ depending only on $Q_i\ (1\le i\le q)$.
We define 
$${\bf c}_{v,{\bf x}} = (c_{1,0},\ldots,c_{1,n},\ldots,c_{n_0,0},\ldots,c_{n_0,n})\in\mathbb R^{l},$$ 
where
\begin{align}\label{3.4}
c_{i,j}:=\log\frac{c\|{\bf x}\|_v^d}{\|P_{i,j}({\bf x})\|_v}\text{ for } i=1,\ldots ,n_0 \text{ and }j=0,\ldots ,n.
\end{align}
Choose ${\bf a}_{1},\ldots ,{\bf a}_{H_Y(u)}\in\mathbb N^{l}$ with
$$ {\bf a}_{s}=(a_{s,1,0},\ldots,a_{s,1,n},\ldots,a_{s,n_0,0},\ldots,a_{s,n_0,n}), a_{s,i,j}\in\{1,\ldots,\xi_u\}, $$
 such that the residue classes of ${\bf y}^{{\bf a}_{1}},\ldots ,{\bf y}^{{\bf a}_{H_Y(u)}}$ (modulo $(I_Y)_u$) form a basis of $Y_u$ and
\begin{align}\label{3.5}
S_Y(u,{\bf c}_{v,{\bf x}})=\sum_{i=1}^{H_Y(u)}{\bf a}_{i}\cdot{\bf c}_{v,{\bf x}}.
\end{align}
Then we may write
$$ {\bf y}^{{\bf a}_{s}}=L_{s,v,{\bf x}}(v_1,\ldots ,v_{H_Y(u)}), $$ 
where $L_{s,v,{\bf x}}\ (1\le s\le H_Y(u))$ are independent linear forms.
We have
\begin{align*}
\log\prod_{s=1}^{H_Y(u)} \|L_{s,v,{\bf x}}(F({\bf x}))\|_v&=\log\prod_{s=1}^{H_Y(u)}\prod_{\underset{0\le j\le n}{1\le i\le n_0}}\|P_{i,j}({\bf x})\|_v^{a_{s,i,j}}\\
&=-S_Y(m,{\bf c}_{v,{\bf x}})+duH_Y(u)\log \|{\bf x}\|_v .
\end{align*}
This implies that
\begin{align*}
\log\prod_{s=1}^{H_Y(u)}\dfrac{\|F({\bf x})\|_v\cdot \|L_{s,v,{\bf x}}\|_v}{\|L_{s,v,{\bf x}}(F({\bf x}))\|_v}=&S_Y(u,{\bf c}_{v,{\bf x}})-duH_Y(u)\log \|{\bf x}\|_v \\
&+H_Y(u)\log \|F({\bf x})\|_v+O(H_Y(u)).
\end{align*}
Note that $c_{i,j}$ depend on ${\bf x}$, so also the ${\bf a}_s$, but there are only finitely many possibilities for the ${\bf a}_s$ once $u$ is given. Therefore, $L_{s,v,{\bf x}}$ depends on $s$ and ${\bf x}$, but the number of these linear forms is finite. Denote by $\mathcal L_v$ the set of all $L_{s,v,{\bf x}}$ occurring in the above inequalities (when $s$ and ${\bf x}$ vary). Then we have
\begin{align}\label{3.6}
\begin{split}
S_Y(u,{\bf c}_{v,{\bf x}})\le&\max_{\mathcal J\subset\mathcal L_v}\log\prod_{L\in \mathcal J}\dfrac{\|F({\bf x})\|_v\cdot \|L\|_v}{\|L(F({\bf x}))\|_v}+duH_Y(u)\log \|{\bf x}\|_v\\
& -H_Y(u)\log \|F({\bf x})\|_v+O(H_Y(u)),
\end{split}
\end{align}
where the maximum is taken over all subsets $\mathcal J\subset\mathcal L_v$ with $\sharp\mathcal J=H_Y(u)$ and $\{L;L\in\mathcal J\}$ is linearly independent.
From Theorem \ref{2.12} we have
\begin{align}\label{3.7}
\dfrac{1}{uH_Y(u)}S_Y(u,{\bf c}_{v,{\bf x}})\ge&\frac{1}{(n+1)\delta}e_Y({\bf c}_{v,{\bf x}})-\frac{(2n+1)\delta}{u}\max_{\underset{0\le j\le n}{1\le i\le n_0}}c_{i,j}
\end{align}
It is clear that
\begin{align*}
\max_{\underset{0\le j\le n}{1\le i\le n_0}}c_{i,j}\le \sum_{0\le j\le n}\log\frac{c\|{\bf x}\|_v^d}{\|P_{1,j}({\bf x})\|_v}+O(1),
\end{align*}
where $O(1)$ does not depend on ${\bf x}$ and $v$. Combining (\ref{3.6}), (\ref{3.7}) and the above remark, we get
\begin{align}\nonumber
\frac{1}{(n+1)\delta}e_Y({\bf c}_{v,{\bf x}})\le &\dfrac{1}{uH_Y(u)}\left (\max_{\mathcal J\subset\mathcal L_v}\log\prod_{L\in \mathcal J}\dfrac{\|F({\bf x})\|_v\cdot \|L\|_v}{\|L(F({\bf x}))\|_v}-H_Y(u)\log \|F({\bf x})\|_v\right )\\
\label{3.8}
\begin{split}
&+d\log \|{\bf x}\|_v+\frac{(2n+1)\delta}{u}\max_{\underset{0\le j\le n}{1\le i\le n_0}}c_{i,j}+O(1/u)\\
\le &\dfrac{1}{uH_Y(u)}\left (\max_{\mathcal J\subset\mathcal L_v}\log \prod_{L\in\mathcal J}\dfrac{\|F({\bf x})\|_v\cdot \|L\|_v}{\|L(F({\bf x}))\|_v}-H_Y(u)\log \|F({\bf x})\|_v\right )\\
&+d\log \|{\bf x}\|_v+\frac{(2n+1)\delta}{u}\sum_{0\le j\le n}\log\frac{c\|{\bf x}\|_v^d}{\|P_{1,j}({\bf x})\|_v}+O(1/u).
\end{split}
\end{align}
Since $P_{1,0},\ldots,P_{1,n}$ are in general with respect to $V$, By Lemma \ref{2.13}, we have
\begin{align}\label{3.9}
e_Y({\bf c}_{v,{\bf x}})\ge (c_{1,0}+\cdots +c_{1,n})\cdot\delta =\left (\sum_{0\le j\le n}\log\frac{c\|{\bf x}\|_v^d}{\|P_{1,j}({\bf x})\|_v}\right )\cdot\delta.
\end{align}
Combining (\ref{3.8}) and (\ref{3.9}), we get
\begin{align*}
\sum_{0\le j\le n}\log\frac{c\|{\bf x}\|_v^d}{\|P_{1,j}({\bf x})\|_v}\le&\dfrac{n+1}{uH_Y(u)}\left (\max_{\mathcal J\subset\mathcal L_v}\log\prod_{L\in \mathcal J}\dfrac{\|F({\bf x})\|_v\cdot \|L\|_v}{\|L(F({\bf x}))\|_v}-H_Y(u)\log \|F({\bf x})\|_v\right )\\
&+d(n+1)\log \|{\bf x}\|_v+\frac{(n+1)(2n+1)\delta}{n}\max_{\underset{0\le j\le n}{1\le i\le n_0}}c_{i,j}+O(1/u)\\
\le &\dfrac{n+1}{uH_Y(u)}\left (\max_{\mathcal J\subset\mathcal L_v}\log \prod_{L\in\mathcal J}\dfrac{\|F({\bf x})\|_v\cdot \|L\|_v}{\|L(F({\bf x}))\|_v}-H_Y(u)\log \|F({\bf x})\|_v\right )\\
&+d(n+1)\log \|{\bf x}\|_v\\
&+\frac{(n+1)(2n+1)\delta}{u}\sum_{0\le j\le n}\log\frac{\|{\bf x}\|_v^d\cdot\|P_{1,j}\|_v}{\|P_{1,j}({\bf x})\|_v}+O(1/u).
\end{align*}
Then, from (\ref{3.2}) and the above inequality we have
\begin{align}\label{3.10}
\begin{split}
\frac{1}{\Delta}&\left (\log \prod_{i=1}^q\dfrac{\|{\bf x}\|_v^d}{\|Q_i({\bf x})\|_v}+O(1)\right )\\
&\le\dfrac{n+1}{uH_Y(u)}\left (\max_{\mathcal J\subset\mathcal L_v}\log\prod_{L\in\mathcal J}\dfrac{\|F({\bf x})\|_v\cdot \|L\|_v}{\|L(F({\bf x}))\|_v}-H_Y(u)\log \|F({\bf x})\|_v\right )\\
&+d(n+1)\log \|{\bf x}\|_v+\frac{(2n+1)(n+1)\delta}{u}\sum_{\underset{0\le j\le n}{1\le i\le n_0}}\log\frac{c\|{\bf x}\|_v^d}{\|P_{i,j}({\bf x})\|_v},
\end{split}
\end{align}
where $O(1)$ does not depend on ${\bf x}$. 

Summing-up both sides of the above inequalities over all $v\in S$ with noting that $S$ is a finite set, we obtain
\begin{align}\nonumber
\frac{1}{\Delta}\sum_{v\in S}\sum_{j=1}^q\lambda_{Q_j,v}(x)\le &\dfrac{n+1}{uH_Y(u)}\sum_{v\in S}\max_{\mathcal J\subset\mathcal L_v}\log\prod_{L\in\mathcal J}\dfrac{\|F({\bf x})\|_v\cdot \|L\|_v}{\|L(F({\bf x}))\|_v}\\
\label{3.11}
\begin{split}
&-\dfrac{n+1}{u}\sum_{v\in S}\log \|F({\bf x})\|_v+d(n+1)\sum_{v\in S}\log \|{\bf x}\|_v\\
&+\frac{(2n+1)(n+1)\delta}{u}\sum_{\underset{0\le j\le n}{1\le i\le n_0}}\sum_{v\in S}\log\frac{\|{\bf x}\|_v^d\cdot\|P_{i,j}\|_v}{\|P_{i,j}({\bf x})\|_v}+O(1).
\end{split}
\end{align} 
Note that the above inequality does not depend on the choice of components of ${\bf x}$. By enlarging $S$, if necessary, we may assume that $S$ contains $M^\infty_k$, and all coefficients of all polynomials $P_{i,j}$ are $S$-integers. Then, for each $v\in M_k\setminus S$, 
$$ \|P_{i,j}({\bf x})\|_v\le \|{\bf x}\|^d_v\cdot \|P_{i,j}\|_v\le \|{\bf x}\|^d_v \text{ for all }i,j,$$
and hence by the definition of $F$ we have
$$ \|F({\bf x})\|_v\le \|{\bf x}\|^{du}_v,\text{ i.e., }-\frac{1}{u}\log \|F({\bf x})\|_v+d\log \|{\bf x}\|_v\ge 0.$$
This implies that 
$$-\dfrac{1}{u}\sum_{v\in S}\log \|F({\bf x})\|_v+d\sum_{v\in S}\log \|{\bf x}\|_v\le -\dfrac{1}{u}\sum_{v\in M_k}\log \|F({\bf x})\|_v+d\sum_{v\in M_k}\log \|{\bf x}\|_v.$$
Then we have
\begin{align}\label{new}
-\dfrac{n+1}{u}\sum_{v\in S}\log \|F({\bf x})\|_v+d(n+1)\sum_{v\in S}\log \|{\bf x}\|_v\le -\frac{n+1}{u}h(F({\bf x}))+dh({\bf x}).
\end{align}
Since $\|P_{i,j}({\bf x})\|_v\le \|{\bf x}\|^d_v\cdot \|P_{i,j}\|_v$ for all $i,j$ and $v\in M_k\setminus S$, we have
\begin{align}\label{new2}
\sum_{v\in S}\log\frac{\|{\bf x}\|_v^d\cdot\|P_{i,j}\|_v}{\|P_{i,j}({\bf x})\|_v}\le \sum_{v\in M(k)}\log\frac{\|{\bf x}\|_v^d\cdot\|P_{i,j}\|_v}{\|P_{i,j}({\bf x})\|_v}=dh({\bf x})+h(P_{i,j}).
\end{align}
By the subspace theorem due to Schlickewei \cite{Sch77} (see also Schmidt \cite[Theorem 3]{Sch75}), for every $\epsilon' >0$ and all $F({\bf x})$ outside a finite union of proper linear subspaces, we have
\begin{align*}
\sum_{v\in S}\max_{\mathcal J\subset\mathcal L_v}\log\prod_{L\in\mathcal J}\dfrac{\|F({\bf x})\|_v\cdot \|L\|_v}{\|L(F({\bf x}))\|_v}\le (H_Y(u)+\epsilon') h(F({\bf x})).
\end{align*}
Combining (\ref{3.11}), (\ref{new}), (\ref{new2}) and this inequality, we have
\begin{align}\label{3.12}
\begin{split}
\frac{1}{\Delta}\sum_{v\in S}\sum_{j=1}^q\lambda_{Q_j,v}({\bf x})\le& \dfrac{\epsilon'(n+1)}{uH_Y(u)}h(F({\bf x}))\\
&+\left(d(n+1)+\frac{(2n+1)(n+1)l\delta}{u}\right)h({\bf x})+O(1)\\
\le&\left(d(n+1)+\frac{(2n+1)(n+1)l\delta}{u}+\dfrac{\epsilon'(n+1)d}{H_Y(u)}\right)h({\bf x})+O(1),
\end{split}
\end{align}
for all ${\bf x}\in V$ outside a finite union of proper algebraic subsets.

Note that $h(F({\bf x}))\le uh({\bf x})$. Then by choosing
$$ u\ge 4\Delta(2n+1)(n+1)l\delta \epsilon^{-1} \ \text{ and } \epsilon'\le \frac{H_Y(u)\epsilon}{4\Delta(n+1)d},$$
from (\ref{3.12}) we get
$$ \sum_{v\in S}\sum_{j=1}^q\dfrac{\lambda_{Q_j,v}({\bf x})}{d}\le  \left(\Delta(n+1)+\frac{\epsilon}{2}\right)h({\bf x})+C,$$
for all ${\bf x}\in V$ outside a finite union of proper algebraic subsets, where $C$ is a positive constant.
Since there only finite points ${\bf x}\in V$ such that $h({\bf x})$ is bounded above by $\dfrac{2C}{\epsilon}$, then
$$ \sum_{v\in S}\sum_{j=1}^q\dfrac{\lambda_{Q_j,v}({\bf x})}{d}\le  \left(\Delta(n+1)+\epsilon\right)h({\bf x})$$
for all ${\bf x}\in V$ outside a finite union of proper algebraic subsets. The theorem is proved.
\end{proof}

\vskip0.2cm
\noindent
{\bf Disclosure statement:} The author states that there is no conflict of interest. 
%

\vskip0.2cm
\noindent
{\bf Funding:} This research is funded by Vietnam National Foundation for Science and Technology Development (NAFOSTED) under grant number 101.02-2021.12.

\vskip0.2cm
{\footnotesize 
\noindent
{\sc Si Duc Quang}\\
$^1$ Department of Mathematics, Hanoi National University of Education,\\
 136-Xuan Thuy, Cau Giay, Hanoi, Vietnam.\\
$^2$ Thang Long Institute of Mathematics and Applied Sciences,\\
 Nghiem Xuan Yem, Hoang Mai, HaNoi, Vietnam.\\
\textit{E-mail}: quangsd@hnue.edu.vn}


\begin{thebibliography}{HD}

\bibitem{AP} T. T. H. An and H. T. Phuong, \textit{An explicit estimate on multiplicity truncation in the Second Main Theorem for holomorphic curves encountering hypersurfaces in general position in projective space,} Houston J. Math. \textbf{35} (2009) 775--786.


\bibitem{Ca} H. Cartan, \textit{Sur les z\'{e}roes des combinaisons lin\'{e}aries de p fonctions holomorphes donn\'{e}es,} Mathematica \textbf{7} (1933) 80--103.

\bibitem{CRY} Z. Chen, M. Ru and Q. Yan, \textit{The degenerated second main theorem and Schmidt’s subspace theorem,} Sci. China Math. \textbf{55} (2012) 1367--1380.

\bibitem{CZ} P. Corvaja and U. Zannier, \textit{On a general Thue's equation,} Amer. J. Math. \textbf{126} (2004) 1033--1055.

\bibitem{DT} G. Dethloff and T. V. Tan, \textit{Holomorphic curves into algebraic varieties intersecting moving hypersurfaces targets,} Acta Math. Vietnam \textbf{45} (2020) 291--308. 

\bibitem{ES} A.E. Eremenko and M. L. Sodin, \textit{The value distribution of meromorphic functions and meromorphic curves from the point of view of potential theory}, St Petersburg Math. J. \textbf{3} (1992) 109--136.

\bibitem{EF1} J. Evertse and R. Ferretti, \textit{Diophantine inequalities on projective varieties,} Internat. Math. Res. Notices \textbf{25} (2002) 1295--1330.

\bibitem{EF2} J. Evertse and R. Ferretti, \textit{A generalization of the subspace theorem with polynomials of higher degree,} Developments in Mathematics \textbf{16}, 175--198, Springer-Verlag, New York (2008).


\bibitem{Fu} H. Fujimoto, \textit{Non-integrated defect relation for meromorphic maps of complete K\"ahler manifolds into  $\mathbb P^{N_1}(\mathbb C)\times \ldots  \times \mathbb P^{N_k}(\mathbb C),$} Japanese J. Math. \textbf{11} 233--264 (1985).

\bibitem{G} L. Giang, \textit{An explicit estimate on multiplicity truncation in the degenerated second main theorem,} Houston J. Math. \textbf{42} (2016) 447--462.

\bibitem{JYY} Q. Ji, Q. Yan and G. Yu, \textit{Holomorphic curves into algebraic varieties intersecting divisors in subgeneral position}, Math. Annalen \textbf{373} (2019) 1457--1483.

\bibitem{K82} L. Karp, \textit{Subharmonic functions on real and complex manifolds}, Math. Z. \textbf{179} (1982), 535--554. 

\bibitem{L} A. Levin, \textit{Generalizations of Siegel’s and Picard’s theorems}, Ann. of Math. \textbf{170} (2009), no. 2, 609--655.

\bibitem{AL} A. Levin, \textit{On the Schmidt subspace theorem for algebraic points}, Duke Math. J. \textbf{163} (2014), 2841--2885

\bibitem{Noc83} E. I. Nochka, \textit{On the theory of meromorphic functions}, Sov. Math. Dokl. \textbf{27} (1983) 377--381.

\bibitem{NO} Noguchi, J., Ochiai, T, Introduction to Geometric Function Theory in Several Complex Variables, Trans. Math. Monogr. 80, Amer. Math. Soc. Providence, Rhode Island, 1990.

\bibitem{No05} J. Noguchi, \textit{A note on entire pseudo-holomorphic curves and the proof of Cartan-Nochka's theorem,} Kodai Math. J. \textbf{28} 336--346 (2005).

\bibitem{Q17} S. D. Quang, N. T. Q. Phuong and N. T. Nhung, \textit{Non-integrated defect relation for meromorphic maps from a Kahler manifold intersecting hypersurfaces in subgeneral position of $\P^n(\C)$},  J. Math. Anal. Appl. 452 (2017), 1434--1452.

\bibitem{Q19} S. D. Quang, \textit{Degeneracy second main theorems for meromorphic mappings into projective varieties with hypersurfaces}, Trans. Amer. Math. Soc. \textbf{371} (2019), no. 4, 2431--2453.

\bibitem{Q19b} S. D. Quang, \textit{A generalization of the subspace theorem for higher degree polynomials in subgeneral position}, Int. J. Number Theory \textbf{15} (2019), no. 4, 775--788.

\bibitem{Q21} S. D. Quang, L. N. Quynh and N. T. Nhung, \textit{Non-integrated defect relation for meromorphic maps from K\"{a}hler  manifolds with hypersurfaces of a projective variety in subgeneral position}, Tohoku Math. J. \textbf{73} (2021), no. 2, 199--219. 

\bibitem{Ru97} M. Ru, \textit{On a general form of the Second Main Theorem,} Trans Amer Math Soc, \textbf{349} (1997) 5093--5105.

\bibitem{Ru04} M. Ru, \textit{A defect relation for holomorphic curves intersecting hypersurfaces,} Amer. J. Math. \textbf{126} (2004) 215--226.

\bibitem{Ru09} M. Ru, \textit{Holomorphic curves into algebraic varieties}, Ann. Math. \textbf{169} (2009) 255--267. 

\bibitem{SR} L. Shi and M. Ru, \textit{An improvement of Chen-Ru-Yan’s degenerated second main theorem}, Sci. China Math. \textbf{58} (2015) 2517--2530.

\bibitem{RS} M. Ru and N. Sibony, \textit{The Second Main Theorem in the hyperbolic case}, Math. Ann. \textbf{377} (2020) 759--795.

\bibitem{RSo} M. Ru and S. Sogome, \textit{Non-integrated defect relation for meromorphic maps of complete K\"{a}hler manifolds into $\P^n(\C)$ intersecting hypersurfaces}, Trans. Amer. Math. Soc. \textbf{364}  (2012), no. 3, 1145--1162.

\bibitem{Sch77} H. P. Schlickewei, \textit{The p-adic Thue-Siegel-Roth-Schmidt theorem.} Arch. Math. \textbf{29} (1977) 267-- 270.



\bibitem{Sch75} W. M. Schmidt, \textit{Simultaneous Approximation to Algebraic numbers by Elements of a Number Field}, Monatsh. Math. \textbf{79} (1975) 55--66.

\bibitem{Sh79} B. Shiffman, \textit{On holomorphic curves and meromorphic map in projective space}, Indiana Univ. Math. J. \textbf{28} (1979), 627--641.


\bibitem{TT} T. V. Tan and V. V. Truong, \textit{A non-integrated defect relation for meromorphic maps of complete K\"ahler manifolds into a projective variety intersecting hypersurfaces}, Bull. Sci. Math. \textbf{136} (2012), 111--126.


\bibitem{TV} D. D. Thai and V. D. Viet, \textit{Holomorphic mappings into compact complex manifolds},  Houston J. Math. \textbf{43} (2017), no. 3, 725--762

\bibitem{V} P. Vojta, \textit{Diophantine Approximation and Nevanlinna Theory}, In: P. Corvaja, C. Gasbarri (eds) Arithmetic Geometry. Lecture Notes in Mathematics, vol 2009. Springer, Berlin, Heidelberg, 2011.

\bibitem{Y13} Q. Yan, \textit{ Non-integrated defect relation and uniqueness theorem for meromorphic maps of a complete K\"ahler manifold into $P^n(\mathbb C)$}, J. Math. Anal. Appl. \textbf{398} (2013), 567--581.

\bibitem{Y76} S. T. Yau, \textit{Some function-theoretic properties of complete Riemannian manifolds and their applications to geometry}, Indiana U. Math. J. \textbf{25} (1976), 659--670.

\end{thebibliography}
\end{document}